\documentclass[10pt,a4paper,twoside,reqno]{amsart}

\usepackage{microtype}
\usepackage{amssymb}
\usepackage{mathtools}
\usepackage[shortlabels]{enumitem}
\usepackage[utf8]{inputenc}
\usepackage{color}
\usepackage{xspace}
\usepackage{fouridx}
\usepackage[mathcal]{euscript}			
\usepackage{tikz}
\usetikzlibrary{positioning,decorations.pathreplacing,decorations.markings,arrows.meta,calc,fit,quotes,cd,math,arrows,backgrounds,shapes.geometric}
\usepackage[a4paper,top=3cm,bottom=3cm,inner=2.5cm,outer=2.5cm]{geometry}
\usepackage{graphicx}
\definecolor{darkgreen}{rgb}{0,0.45,0}
\usepackage{bm}
\usepackage[colorlinks,citecolor=darkgreen,urlcolor=gray,final,hyperindex,linktoc=page,hyperfootnotes=true]{hyperref}
\usepackage[arrow, matrix, tips, curve, graph, rotate]{xy}
\SelectTips{cm}{10}

\usepackage{tabularx}

\usepackage{babel}
\usepackage{soul}

\setlist[enumerate]{label=(\roman*),itemsep=1ex,topsep=1ex}

\setlist[itemize]{itemsep=1ex,topsep=1ex}

\makeatletter

\renewcommand{\section}{\@startsection
{section}
{0}
{0mm}
{-\baselineskip}
{1\baselineskip}
{\centering \Large \bfseries}}

\renewcommand{\subsection}{\@startsection
{subsection}
{1}
{0mm}
{-\baselineskip}
{0.5\baselineskip}
{\large \bfseries}}

\makeatother
 
\usepackage[capitalize]{cleveref}
\crefname{equation}{}{}
\crefname{lem}{Lemma}{Lemmas}
\crefname{thm}{Theorem}{Theorems}
\crefname{defn}{Definition}{Definitions}
\crefname{conj}{Conjecture}{Conjectures}
\crefname{ex}{Example}{Examples}
\crefname{sec}{Section}{Sections}
\crefname{prop}{Proposition}{Propositions}
\crefname{rmk}{Remark}{Remarks}
\crefname{conv}{Convention}{Conventions}
\crefname{cor}{Corollary}{Corollaries}

\crefname{short-lem}{Lemma}{Lemmas}
\crefname{short-thm}{Theorem}{Theorems}
\crefname{short-defn}{Definition}{Definitions}
\crefname{short-conj}{Conjecture}{Conjectures}
\crefname{short-ex}{Example}{Examples}
\crefname{short-sec}{Section}{Sections}
\crefname{short-prop}{Proposition}{Propositions}
\crefname{short-rmk}{Remark}{Remarks}
\crefname{short-conv}{Convention}{Conventions}
\crefname{short-cor}{Corollary}{Corollaries}
\crefname{thmintro}{Theorem}{Theorems}

\theoremstyle{plain}
\newtheorem{thm}{Theorem}[subsection]
\newtheorem{cor}[thm]{Corollary}
\newtheorem{lem}[thm]{Lemma}
\newtheorem{prop}[thm]{Proposition}

\theoremstyle{remark}
\newtheorem{rmk}[thm]{Remark}

\theoremstyle{definition}
\newtheorem{defn}[thm]{Definition}
\newtheorem{ex}[thm]{Example}

\theoremstyle{plain}
\newtheorem{short-thm}{Theorem}[section]
\newtheorem{short-cor}[short-thm]{Corollary}
\newtheorem{short-lem}[short-thm]{Lemma}
\newtheorem{short-prop}[short-thm]{Proposition}
\newtheorem{short-conj}[short-thm]{Conjecture}
\theoremstyle{remark}
\newtheorem{short-rmk}[short-thm]{Remark}
\newtheorem{short-conv}[short-thm]{Convention}
\theoremstyle{definition}
\newtheorem{short-defn}[short-thm]{Definition}
\newtheorem{short-ex}[short-thm]{Example}

\newtheorem{thmintro}{Theorem}
\newenvironment{thm-intro}[1]
  {\renewcommand\thethmintro{#1}\thmintro\itshape}
  {\endthmintro}
\newenvironment{thm-introref}[2]
  {\renewcommand\thethmintro{#1}\thmintro(#2)\itshape}
  {\endthmintro}

\definecolor{mypurple}{rgb}{0.5, 0.0, 0.5}

\newcommand{\myemph}{\emph}
\newcommand{\ie}{i.e.\xspace}
\newcommand{\cf}{cf.\xspace}
\newcommand{\eg}{e.g.\xspace}

\newcommand{\defeq}{=_\mathrm{def}}
\newcommand{\co}{:}
\newcommand{\op}{^\mathrm{op}}
\newcommand\sfop{^\mathsf{op}}
\newcommand{\id}{\mathrm{id}}

\newcommand{\bimcomp}[1]{\underset{#1}\circ} 
\newcommand{\coend}[1]{\underset{#1}\otimes} 

\newcommand{\cat}{}
\newcommand{\catA}{\cat{A}}
\newcommand{\catB}{\cat{B}}
\newcommand{\catC}{\cat{C}}

\newcommand{\moncat}{}
\newcommand{\moncatM}{\moncat{M}}
\newcommand{\moncatN}{\moncat{N}}

\newcommand{\coc}{\mathit}
\newcommand{\cocC}{\coc{C}}

\newcommand{\opd}{\mathcal} 
\newcommand{\opdO}{\opd{O}}
\newcommand{\opdP}{\opd{P}}
\newcommand{\opdQ}{\opd{Q}}

\newcommand{\ccat}{\mathcal}
\newcommand{\ccatC}{\ccat{C}}
\newcommand{\ccatD}{\ccat{D}}

\newcommand{\rig}{}
\newcommand{\rigR}{\rig{R}}
\newcommand{\rigS}{\rig{S}}
\newcommand{\rigT}{\rig{T}}
\newcommand{\rigV}{\rig{V}}

\newcommand{\Bim}{\mathsf{Bim}}
\newcommand{\BimK}{\Bim(\ccatC)}

\newcommand{\EMK}{\mathsf{EMK}}

\newcommand{\emb}{\mathsf{y}}
\newcommand{\embA}{\emb_\catA}

\newcommand{\Psh}{\mathsf{Psh}}
\newcommand{\psh}[1]{\Psh{\left(#1\right)}}
\newcommand{\pshA}{\psh\catA}
\newcommand{\pshB}{\psh\catB}

\newcommand{\pshM}{\psh\moncatM}

\newcommand{\pshC}{\psh\cocC}

\newcommand{\Freesmc}{\mathsf{S}}
\newcommand{\freesmc}[1]{\Freesmc{\left(#1\right)}}
\newcommand{\freesmcA}{\freesmc\catA}
\newcommand{\freesmcB}{\freesmc\catB}
\newcommand{\freesmcC}{\freesmc\catC}

\newcommand{\freerig}[1]{\psh{\freesmc{#1}}}
\newcommand{\Freerig}{\freerig-}
\newcommand{\freerigA}{\freerig\catA}
\newcommand{\freerigB}{\freerig\catB}
\newcommand{\freerigC}{\freerig\catC}

\newcommand{\Sym}{\mathsf{Sym}}
\newcommand{\sym}[1]{\Sym{\left(#1\right)}}

\newcommand{\opdrig}[1]{\psh{\env{#1}}}
\newcommand{\opdrigP}{\opdrig\opdP}
\newcommand{\opdrigQ}{\opdrig\opdQ}

\newcommand\unit{\mathbf{1}}

\newcommand\set[1]{\underline{#1}}
\newcommand\symelt[2]{\angles{#1_1,\dots,#1_{#2}}}

\renewcommand\vec{\overline}
\renewcommand\hat{\widehat}

\newcommand\As{\mathrm{\mathcal{A}s}}
\newcommand\Asnu{\mathrm{\mathcal{A}s_{nu}}}
\newcommand\Com{\mathrm{\mathcal{C}om}}
\newcommand\Comnu{\mathrm{\mathcal{C}om_{nu}}}
\newcommand\Ezero{\mathcal{E}_0}

\newcommand\AsMod{\mathrm{\As\mathcal{M}od}}

\newcommand\rigmor[1]{\Phi{\left(#1\right)}}
\newcommand\monad[1]{\widehat{#1}}

\newcommand\slice[1]{_{/#1}}

\newcommand\sfU{\mathsf U}

\DeclareMathOperator*{\colim}{colim}
\DeclareMathOperator*{\bicolim}{bicolim}

\newcommand{\tto}{{\begin{tikzcd}[ampersand replacement=\&]{}\ar[r]\&{}\end{tikzcd}}}

\newcommand{\mto}{{\begin{tikzcd}[ampersand replacement=\&]{}\ar[r,mapsto]\&{}\end{tikzcd}}}

\newcommand{\stto}{{\begin{tikzcd}[ampersand replacement=\&, sep=small]{}\ar[r]\&{}\end{tikzcd}}}
\newcommand{\smto}{{\begin{tikzcd}[ampersand replacement=\&, sep=small]{}\ar[r,mapsto]\&{}\end{tikzcd}}}

\newcommand{\xto}[1]{\xrightarrow {#1}}            

\newcommand{\Env}{\mathsf{Env}}
\newcommand{\env}[1]{\Env{\left(#1\right)}}
\newcommand{\envP}{\env\opdP}
\newcommand{\envQ}{\env\opdQ}

\newcommand{\envC}{\env\catC}

\newcommand{\END}{\mathsf{End}}
\newcommand{\End}[1]{\END{\left(#1\right)}}
\newcommand{\EndM}{\End\moncatM}

\newcommand{\nc}{\mathsf}
\newcommand{\SET}{\nc{SET}}
\newcommand{\Set}{\nc{Set}}
\newcommand{\Fin}{\nc{Fin}}
\newcommand{\FinInj}{\nc{Fin_{Inj}}}
\newcommand{\FinSurj}{\nc{Fin_{Surj}}}
\newcommand{\FinBij}{\nc{Fin_{Bij}}}
\newcommand{\FinOrd}{\nc{Fin^{Ord}}}
\newcommand{\FinSurjOrd}{\nc{Fin^{Ord}_{Surj}}}

\newcommand{\Cat}{\nc{Cat}}
\newcommand{\Gpd}{\nc{Gpd}}

\newcommand{\CAT}{\nc{CAT}}

\newcommand{\TAME}{\nc{TAME}}

\newcommand{\COC}{\nc{COC}}
\newcommand{\FCOC}{\COC^{\mathsf{Free}}}
\newcommand{\FsetCOC}{\COC^{\mathsf{Free\,on\,Set}}}
\newcommand\Pres{\mathsf{Pres}}

\newcommand{\SIFTCAT}{\nc{SIFT}}
 \newcommand{\SIFTCATCOC}{\SIFTCAT\COC}

\newcommand{\SMCat}{\nc{SMCat}}
\newcommand{\SMCAT}{\nc{SMCAT}}

\newcommand{\RIG}{\nc{2}\text{-}\nc{RIG}}
\newcommand\Rig{\nc{2}\text{-}\nc{Rig}}

\newcommand\FRig{\Rig^{\nc{Free}}}

\newcommand\FsetRig{\Rig^\mathsf{Free\,on\,set}}
\newcommand\FgpdRig{\Rig^\mathsf{Free\,on\,gpd}}

\newcommand\CRig{\Rig^\mathsf{Conv}}

\newcommand\OpdRig{\Rig^{\nc{Opd}}}

\newcommand\VRig{\Rig_{\rigV}}
\newcommand\FVRig{\VRig^{\nc{Free}}}
\newcommand\OpdVRig{\VRig^{\nc{Opd}}}
\newcommand\FsetVRig{\VRig^\mathsf{Free\,on\,set}}

\newcommand{\SetSym}{\Set_{\nc{Sym}}}

\newcommand{\CatSym}{\nc{Cat}_{\nc{Sym}}}

\newcommand{\OPD}{\nc{OPD}}

\newcommand\Opd{\mathsf{Opd}}
\newcommand\Opdone{\mathsf{Opd}^{(1)}}

\newcommand\OpdBim{\Opd_{\mathsf{Bim}}}
\newcommand\OpdVone{\Opd_{\rigV}^{(1)}}
\newcommand\OpdV{\Opd_{\rigV}}

\newcommand\fun[2]{\left[#1,#2\right]}
\newcommand\Fun[2]{\big[#1,#2\big]}
\renewcommand\hom[3]{#1{\left(#2,#3\right)}}
\newcommand\Hom[3]{#1\big(#2,#3\big)}
\newcommand\HOM[3]{#1\Big(#2,#3\Big)}

\newcommand\covariant[2]{{#1}_{#2}}

\newcommand\contravariant[2]{{#1}^{#2}}
\newcommand\Scontravariant[2]{{#1}^{\vec #2}}
\newcommand\Scovariantotimes[2]{{(#1^\otimes)}_{\vec #2}}
\newcommand\bivariant[3]{{#1}^{#2}_{#3}}
\newcommand\Sbivariant[3]{{#1}^{\vec #2}_{#3}}
\newcommand\Sbivariantotimes[2]{{\left(#1^\otimes\right)}_{\vec #2}}
\newcommand\SSbivariantotimes[3]{{\left(#1^\otimes\right)}_{\vec #3}^{\vec #2}}
\newcommand\matrice[3]{#1{\left[#2\,;#3\right]}}
\newcommand\Smatrice[3]{#1{\left[\vec #2\,;#3\right]}}

\newcommand{\alg}[1]{{#1}\textsf{-}\nc{Alg}}
\newcommand{\AlgP}{\alg{\opdP}}

\newcommand\Alg[2]{{#2}\textsf{-}\nc{Alg}{\left(#1\right)}}
\newcommand{\Algname}[1]{\nc{Alg}{\left(#1\right)}}

\tikzset{tick/.style={postaction={decorate,decoration={markings,mark=at
position 0.5 with {\draw[-] (0,.4ex) -- (0,-.4ex);}}}}}
\tikzset{bigtick/.style={postaction={decorate,decoration={markings,mark=at
position 0.5 with {\draw[-] (0,.6ex) -- (0,-.6ex);}}}}}

\newcommand{\sbul}{\scriptstyle\bullet}
\tikzset{bul/.style={postaction={decoration={markings,mark=at position 0.5 with
{\node{$\sbul$};}},decorate}}}

\tikzset{Rightarrow/.style={double equal sign distance,>={Implies},->},
triple/.style={-,preaction={draw,Rightarrow}}}

\newcommand\postcompo[1]{#1\circ-}
\newcommand\precompo[1]{-\circ#1}
\newcommand\bicompo[2]{#1\circ-\circ#2}
\newcommand{\LMod}[4]{\hom{#1}{#2}{#3}^{\postcompo{#4}}}
\newcommand{\RMod}[4]{\hom{#1}{#2}{#4}^{\precompo{#3}}}
\newcommand{\LRMod}[5]{\hom{#1}{#2}{#4}^{\bicompo{#5}{#3}}}

\newcommand\bimssj[1]{\left|#1\right|}

\DeclarePairedDelimiter\angles\langle\rangle

\begin{document}
\title{Operadic 2-rigs}

\author[M.\,Anel]{Mathieu Anel} 
\address{Laboratoire J.-A. Dieudonn\'e, Universit\'e C\^ote d'Azur, Nice, France}
\email{mathieu.anel@protonmail.com}

\author[M.\,Fiore]{Marcelo Fiore}
\address{Department of Computer Science and Technology, University of Cambridge, United Kingdom}
\email{marcelo.fiore@cl.cam.ac.uk}

\author[N.\,Gambino]{Nicola Gambino}
\address{Department of Mathematics, The University of Manchester}
\email{nicola.gambino@manchester.ac.uk}

\keywords{Symmetric 2-rigs, operads, bimodules.}

\subjclass[2020]{18M60, 18N10,18M05, 18D60}

\begin{abstract}
We show that the bicategory of operads and bimodules can be embedded into the bicategory of symmetric 2-rigs, a categorification of commutative rings.
In order to do this, we introduce the notion of an operadic 2-rig and show that the full sub-bicategory of symmetric 2-rigs spanned by operadic 2-rigs has the universal property of being a completion under Eilenberg--Moore--Kleisli objects.
\end{abstract}

\date{\today}

\maketitle

\setcounter{tocdepth}{1}
\tableofcontents

\section{Introduction}

\subsection{Context and motivation}

This paper seeks to advance the theory of operads, which has its origins in algebraic topology~\cite{MayJ:geoils,BoardmanJ:homias}, but has since found applications also in algebra~\cite{LodayJL:algo,LivernetM:frolma} and beyond~\cite{MarklM:opeatp,LurieJ:higa}.
Operads capture in a uniform way various kinds of algebraic structures. Indeed, each operad has an associated category of algebras; for example, there are operads $\As$ and $\Com$, whose categories of algebras are
associative and commutative algebras, respectively. For some applications, it is necessary to consider coloured operads (also known as symmetric multicategories)~\cite{yau2016colored}, which generalise operads by allowing many-sorted algebraic structures, such as that of an algebra and a module over it. 
Another example is the coloured operad whose algebras are the operads themselves~\cite{BergerC:rescorha,LeinsterT:higohc}.
Below, for simplicity, we shall simply speak of operads rather than coloured operads.

Operads are algebraic structures in their own right, with a corresponding notion of morphism, which captures many---but not all---universal constructions  between algebras. For example there is a morphism of operads $U \co \As \to \Com$ which determines the functor mapping  a commutative algebra to its underlying associative algebra.
The ``restriction'' functor $U$ has also a left adjoint ``induction'' functor $F$, sending an associative algebra to its Abelianisation.
This functor 
cannot in general be described as the restriction along a morphism $\Com \to \As$.
More generally, the functor $T \co \alg\As \to \alg\As$ taking an associative algebra $A$ to its tensor algebra $T(A)$ is not given by ``restriction'' or ``induction'' along a morphism of operads from $\As$ to itself.
This raises the problem of defining additional maps between operads, so that more functors between their algebras can be represented in a simple algebraic form.

Building on the notion of a module for an operad~\cite{KapranovM:modmto,RezkC:spaasc}, it is possible
to define operad bimodules and show that they  
provide additional maps~\cite{FresseB:modoof,GambinoN:opebaf}. Indeed, operad bimodules induce functors between categories of algebras for operads,
which include the tensor algebra functor mentioned above, as well as induction and restriction functors~\cite{FresseB:modoof}. Such functors were called analytic in~\cite[Section~4.5]{GambinoN:opebaf} because they generalise the analytic functors between categories of presheaves of~\cite{FioreM:carcbg}, which in turn extend 
the analytic functors originally introduced by Joyal in~\cite{JoyalA:fonaes}. Operads, bimodules, and bimodule morphisms 
give rise to a bicategory $\OpdBim$ \cite[Section~4.4]{GambinoN:opebaf},
just as rings, ring bimodules, and bimodule morphisms do.

The original motivation for this paper was to provide a more conceptual understanding of operad bimodules and of the analytic functors between categories of algebras for  operads induced by them.
This, in turn, aims to make possible applications to cartesian closed and differential structure in bicategories of interest, as discussed below.
Our results in this paper therefore contribute also to a line of research begun in~\cite{FioreM:carcbg}, where the bicategory $\CatSym$ of symmetric sequences was defined by extending Joyal's work in enumerative combinatorics~\cite{JoyalA:thecsf,JoyalA:fonaes} and categorical approaches to operads~\cite{BaezJ:higda,KellyG:opemay}. The bicategories $\CatSym$ and its extension $\OpdBim$ have been shown to possess a rich structure~\cite{FioreM:monbdll,GambinoN:monkbap,GambinoN:unitctp}, making them relevant in mathematical logic~\cite{HylandM:somrgd}, theoretical computer science~\cite{FioreM:stapss,GalalZ:fixo2c,Olimpieri,OngL:quaslc}, and operad theory~\cite{DotsenkoV:endpbt,PavlovD:admrcs}. We therefore hope that our results may be of wide interest.

\subsection{Main results}

The starting point for our work are \emph{(symmetric) 2-rigs}, which are a categorification of commutative rigs, \ie commutative rings without negatives. 
In the same way that a commutative ring is a set equipped with
the structures of an Abelian group and of a commutative monoid interacting by means of a distributivity law,
a symmetric 2-rig is a category with small colimits and with a symmetric monoidal structure interacting in the sense of the tensor product preserving colimits in each variable.
For example, the category of sets, viewed as a cartesian monoidal category, 
and the category of vector spaces with the tensor product,
are symmetric 2-rigs.
See~\cite{Baez:schfcp,BrandenburgM:refdca,ChirvasituA:funpga,LoregianF:dif2r} for some recent work on 2-rigs.

Algebras for operads are often considered within a 2-rig. 
Indeed, a 2-rig $\rigR$ defines an  operad $\End\rigR$, called the `endomorphism'  operad of~$R$. With this definition, an algebra for an operad $\opdP$ in $\rigR$ is defined as an operad morphism 
from $\opdP$ to $\End\rigR$. This definition can be reformulated using the fact that an operad $\opdP$ determines a  2-rig $\opdrigP$, constructed from $\opdP$ in two steps. First, one considers 
the enveloping symmetric monoidal category $\envP$ of $\opdP$, given by the biadjunction between  operads and symmetric monoidal categories~\cite{ElmendorfA:percma}. Secondly, one takes the category of  presheaves $\opdrigP$ over $\envP$, considered as a 2-rig with respect to the convolution monoidal structure~\cite{DayB:clocf}. With these definitions,  operad morphisms $\opdP \to \End\rigR$ are equivalent to a 2-rig morphism $\opdrigP\to \rigR$. 

We define \emph{operadic $2$-rigs} to be the 2-rigs  of this form, \ie~those in the essential image of the pseudofunctor
\begin{equation}
\label{eq:from-smultcat-to-2rigs}
\tag{$\ast$}
\begin{tikzcd}[column sep = large]
\Opd \ar[r, "\Psh \circ \Env"]  & \Rig \mathrlap{,}
\end{tikzcd}
\end{equation}
where $\Opd$ is the bicategory of  operads, morphisms, and transformations, and $\Rig$ is the bicategory of 2-rigs, their morphisms (\ie symmetric strong monoidal cocontinuous functors) and transformations
(\ie symmetric monoidal transformations). The bicategory of operadic 2-rigs is then defined as the full
sub-bicategory of the bicategory $\Rig$ of  2-rigs spanned by  operadic 2-rigs, thus fitting
into a factorisation of the pseudofunctor in~\eqref{eq:from-smultcat-to-2rigs}  as an essentially surjective functor followed by a fully faithful one, as in
\[
\begin{tikzcd} 
\Opd  \ar[r, two heads]  & \OpdRig \ar[r, hook] &    \Rig \mathrlap{.} 
  \end{tikzcd}
  \]
 The pseudofunctor in~\eqref{eq:from-smultcat-to-2rigs} is faithful (\cref{lem:opd-in-opdbim}) but not full, as there are more 2-rig morphisms between operadic 2-rigs than morphisms between operads.
 It is therefore natural to ask whether it is possible to give a characterisation of these 2-rig morphisms purely in terms of  operads. 
  Pleasingly, our first main result shows that these are exactly the operad bimodules discussed above.

\begin{thm-introref}{A}{see \cref{thm:opdrig=opdbim}}
\label{thmA}
The bicategory $\OpdRig$ of operadic 2-rigs is biequivalent to the bicategory $\OpdBim$ of  operads and their bimodules.
\end{thm-introref}

Read a different way, this theorem says that the bicategory of  operads and their bimodules is equivalent to a full sub-bicategory of the bicategory of 2-rigs, which provides the desired alternative understanding of operad bimodules.

Our second main result provides a characterisation of the bicategory of operadic 2-rigs by a universal property, which is the main technical step to prove \cref{thmA} and provides
a reinterpretation of~\cite[Theorem~5.4.5]{GambinoN:opebaf}  in terms of  2-rigs.

\begin{thm-introref}{B}{see \cref{thm:opdrig=em-frig}}
\label{thmB}
The bicategory $\OpdRig$ of operadic 2-rigs is the closure of the bicategory of free symmetric 2-rigs under the construction of Eilenberg--Moore objects.
\end{thm-introref}

Here, free symmetric 2-rigs are those of the form $\freerigA$, for a category $\catA$, where $\freesmcA$ is the free symmetric monoidal category on $A$.
With these results in place, we can provide a conceptually clear account of the analytic functors between categories of algebras for operads induced by operad bimodules.
We have seen that, for an operad $\opdP$, the category $\Alg\rigR\opdP$ of $\opdP$-algebras in a 2-rig $\rigR$ can be defined 
as the category of morphisms of operads $\opdP\to \End\rigR$ 
or equivalently as the category of morphisms of 2-rigs $\opdrigP\to \rigR$.
This second description factors the functor $\opdP\mapsto \Alg\rigR\opdP$ through the bicategory of operadic 2-rigs
\[
\begin{tikzcd}[column sep = large]
\Algname\rigR:\Opd\sfop \ar[rr, "\Psh \circ \Env", two heads] &&  \big(\OpdRig\big)\sfop \ar[rr, "\hom\Rig-\rigR"] && \CAT \mathrlap{.}
\end{tikzcd}
\]
This factorisation show that categories $\Alg\rigR\opdP$ are not only functorial with respect to operad morphisms, but also with respect to 2-rig morphisms.
This is somehow the motivation to replace operads by their operadic 2-rigs.
Then, analytic functors between categories of algebras for operads, as defined in~\cite[Section~4.5]{GambinoN:opebaf}, arise by composing the pseudofunctors
\[
\begin{tikzcd}[column sep = large]
(\OpdBim)\sfop \simeq \big(\OpdRig\big)\sfop \ar[rr, "\hom\Rig-\rigR"] && \CAT \,.
\end{tikzcd}
\]
Examples are given in \cref{ex:adj-induction-restriction,ex:free-alg}.
As we show as well in \cref{sec:enror}, these results extend to the enriched setting in the expected way. 

As a consequence of our main results, we also obtain that $\OpdBim$ is
biequivalent to the Kleisli bicategory for a relative pseudomonad, answering a question of Richard Garner.
Along the way, we prove other results that may be of independent interest. Indeed, 
the embedding of  operads and their bimodules into symmetric 2-rigs relies on the
analysis of Eilenberg--Moore and Kleisli objects in a special class of bicategories, called tame (\cref{defn:tame}),
where these objects have the special property of coinciding. Here,
we show that the bicategory of symmetric 2-rigs is tame and Eilenberg--Moore--Kleisli
complete (\cref{cor:RIG-is-tame,thm:rig-em-complete}).

In~\cite{AnelM:sym2rcc} we already apply the results in this paper to provide a unified account of the cartesian closed
structure of $(\CatSym)\sfop$ and $(\OpdBim)\sfop$ established in~\cite{FioreM:carcbg} and \cite{GambinoN:opebaf}, respectively,
via a characterisation of coexponentiable symmetric 2-rigs. Furthermore, thanks to possibility of developing
a counterpart of the theory of K\"ahler differentials for symmetric 2-rigs, 
 the embedding of operad bimodules into symmetric 2-rigs obtained here allows us to develop a theory of differentials for 
operad bimodules that extends the one for categorical symmetric sequences
in~\cite{FioreM:monbdll}, which we will present in a future paper.

Finally, we expect that our results can be extended to the $(\infty,2)$-category of 
$\infty$-operads~\cite{HeutsG:simdht,LurieJ:higa}.
Steps in this direction have been taken by Zetto \cite{Zetto} where his `operadic presheaf category’ are a higher analog of our `operadic 2-rigs’. 
See  \cref{rmk:Getzler,rmk:enriched-opd-morphism} for additional information on the relationship with his work.

\subsection{Outline of the paper}

We begin in \cref{sec:background} by reviewing the background of the theory of bicategories needed
in the rest of the paper. \cref{sec:sym2rig}  establishes some auxiliary results on the bicategories $\COC$
of cocomplete categories and~$\RIG$ of symmetric 2-rigs. In particular, we establish
that they are pseudomonadic over the bicategory of categories in two
(equivalent) ways, and
that they are tame and Eilenberg--Moore-Kleisli complete. We
present and study operadic 2-rigs and our main results in~\cref{sec:operadic-2-rigs}. 
We conclude the main development in \cref{sec:enror} by treating the enriched case.
\cref{app:anasf} introduces some notation to deal with analytic functors.

\subsection*{Acknowledgements}
We would like to thank Andr\'e Joyal, who first suggested the link of  operads with symmetric 2-rigs, Clemens Berger for useful conversations,
and Markus Zetto for bringing his paper~\cite{Zetto} to our attention.
Mathieu Anel acknowledges that the research leading to these results has received funding from the European Research Council (ERC) under the European Union's Ninth Framework Programme Horizon Europe (ERC Synergy Project Malinca, Grant Agreement n.~101167526). 
Marcelo Fiore acknowledges that this material is
based upon work supported by EPSRC via grant EP/V002309/1.
Nicola Gambino acknowledges that this material is based upon work supported by
the US Air Force Office for Scientific Research under award number
FA9550-21-1-0007,  by EPSRC via grant EP/V002325/2, 
and ARIA via grant MSAI-PR01-P12.

\section{Background} 
\label{sec:background}

\subsection{Conventions}

We adopt the same size conventions as in~\cite{AnelM:sym2rcc}. In particular, we fix two regular cardinals $\kappa$ and $\lambda$, with $\aleph_0 < \kappa < \lambda$. By definition, 
sets of cardinality less than $\kappa$ will be called small sets, sets of cardinality less than $\lambda$ will be called sets, sets of any other cardinality will be called large sets.
The notions of small category, category, and large category are defined accordingly. By a locally small category we mean a category whose hom-sets are small sets. We write $\Set$ 
for the category of small sets and $\SET$ for the large category of sets. Similarly, we write $\Cat$ for the category of small categories, and $\CAT$ for the large category of categories.
Analogous size conventions will be made about  operads, although in that context we shall concentrate almost exclusively on small  operads.

For categories $\catA$ and $\catB$, we write~$\catA \times \catB$ for their product and~$\fun\catA\catB$ for their exponential, which is the category of functors from $A$ to $B$ and natural transformations. 
For a category $\catA$, we write $\catA\op$ for its opposite. 
When $\catA$ is small the category of \emph{presheaves} over $\catA$ is $\pshA \defeq \fun{\catA\op}\Set$. 
This is a locally small category, but it is not small in general. 
The Yoneda embedding is written $\embA \co \catA \to \fun{\catA\op}\Set$.
In the following, we shall often omit mention of the Yoneda functor and treat it as an inclusion.

\subsection{Bicategories}

We shall assume familiarity with the fundamental notions and results of two-dimensional category theory referrring to~\cite{LackS:a2cc,JohnsonN:twodc} for background.
This paper is written entirely in terms of the theory of bicategories, \ie~weak 2-categories, in order to
ensure that all our results are invariant under the appropriate notion of equivalence. 
Hence, we will speak of bicategories, pseudofunctors (also known as homomorphisms), pseudonatural transformations, and modifications. Accordingly, we will have notions of biadjunction,
bilimit and bicolimit, and pseudomonad. 
While some of
the structures that we deal with are sometimes presented in a stricter way, we will not keep track of this
to avoid proliferation of terminology, trusting expert readers to be able to recognise situations of this
kind.  

For two objects $A$, $B$ in a bicategory $\ccatC$, we write $\hom\ccatC AB$ for the hom-category of maps from $A$ to~$B$ and 2-cells between them.
For maps $f \co A \to B$ and $g \co B \to C$, we write $g \circ f \co A \to C$ for their horizontal composite.
For 2-cells $\alpha \co f \Rightarrow g$ and $\beta \co g \Rightarrow h$, we write $\beta \cdot \alpha \co f \Rightarrow g$ for their vertical composition. 
We say that a map $f \co A \to B$ is an \emph{equivalence} if there exists a map $g \co B \to A$ and isomorphisms $\eta \co \id_A \Rightarrow g \circ f$, $\varepsilon \co f \circ g \Rightarrow \id_B$.
When this happens, we say that $A$ and $B$ are \emph{equivalent} and write $A \simeq B$.
An \emph{adjunction} in $\ccatC$ consists of a pair of maps $f \co A \to B$ and $g \co B \to A$ together with 2-cells $\eta \co \id_A \Rightarrow g \circ f$, $\varepsilon \co f \circ g \Rightarrow \id_B$ satisfying the usual triangular laws. 
By default, our bicategories will be large, \ie~have  a large set of equivalence classes of objects and large hom categories.

Recall from~\cite{StreetR:fibb} that a pseudofunctor $G \co \ccatD \to \ccatC$ has a left biadjoint if and only if 
for each $A \in \ccatC$, we have an object $FA$ in $\ccatD$ and a map $\eta_A \co A \to GFA$ in $\ccatD$ which are universal in the sense that composition with $\eta_A$ induces an equivalence of categories
\[
\begin{tikzcd}[column sep = large]
\hom\ccatD{FA}X  \ar[r, "(-) \circ \eta_a"] & \hom\ccatC A{GX} 
\end{tikzcd}
\]
for every $X$ in $\ccatD$.
Analogously to the 1-categorical setting, right biadjoints preserve bilimits and left biadjoints preserve bicolimits. 
We write $\ccatC \simeq \ccatD$ to indicate that two bicategories $\ccatC$ and $\ccatD$ are biequivalent.
A pseudofunctor $F \co \ccatC \to \ccatD$ is a biequivalence if and only if it is
\emph{essentially surjective} (\ie~for every $X$ in $\ccatD$ there exists $a$ in $\ccatC$ such that $FA \simeq X$) 
and \emph{fully faithful} (\ie~for every $A$ and $B$ in $\ccatC$, the functor 
$F_{A,B} \co \hom\ccatC AB \to \hom\ccatC{FA}{FB}$
is an equivalence of categories). 

We will frequently use that, for a pseudofunctor $F \co \ccatC \to \ccatD$,
there is an \emph{image factorisation} 
\[
\begin{tikzcd}
\ccatC \ar[dr, "F"'] \ar[r, "L" , two heads] & \mathsf{Im}(F) \ar[d, hook, "R"]  \\
 & \ccatD \,,
 \end{tikzcd}
\]
where $\mathsf{Im}(F)$, called the \emph{image} of $F$, is defined as the full 
sub-bicategory of $\ccatD$ spanned by the objects in the essential image
of~$F$, \ie~the objects of $\ccatD$ equivalent to one of the form $F(a)$ for some $a$ in $\ccatC$. The pseudofunctor $L$ is essentially surjective and the
pseudofunctor $R$ is fully faithful.
Let us note that the image factorisation is slightly different from the Gabriel factorisation considered in~\cite{GambinoN:opebaf}, 
since a Gabriel factorisation involves asking the pseudofunctor $L$ to be bijective on objects and hence
it is not invariant under equivalence. For our purposes, it will be useful that $\mathsf{Im}(F)$ is closed under equivalences.

\subsection{Monads in bicategories} 

We review
some basic concepts of the formal theory of
monads~\cite{StreetR:fortm,LackS:fortmII}. Let $\ccatC$ be a bicategory. 
Recall that, for $A$ in $\ccatC$, the hom-category $\hom\ccatC AA$ admits a monoidal structure given by composition, with unit the identity map on $A$. The next definition is stated explicitly in order to make the relation with
\cref{thm:2rigmonad} clear.

\begin{defn} For $A$ in $\ccatC$, a \emph{monad} on $A$ is a monoid in $\hom\ccatC AA$, \ie~a map $p \co A \to A$ equipped with a 2-cell~$\mu \co p \circ p \Rightarrow p$, called the \emph{multiplication}, and a 2-cell 
$\eta \co \id_A \Rightarrow p$, called the \emph{unit}, satisfying the usual associativity and unitality axioms. 
\end{defn}

We shall refer to a monad as a pair $(A,p)$, leaving its multiplication and unit implicit. 

For a monad $(A,p)$ in $\ccatC$ and an object $X$ in $\ccatC$, the endomorphism $p \co A\to A$ induces  
two endofunctors $\postcompo p\co\hom\ccatC XA$ and $\precompo p\co\hom\ccatC AX$ which are monads in $\CAT$.
A \emph{left $p$-module} is a pair $(X,\ell)$ where $X$ is an object in $\ccatC$ and $\ell \co X\to A$ is an algebra
for the monad $\postcompo p$ on $\hom\ccatC XA$.
Explicitly, a left $p$-module is a map $\ell \co X \to A$ equipped with a 2-cell $\lambda \co p \circ\ell \Rightarrow \ell$ satisfying the axioms for a left action of $p$.
A \emph{right $p$-module} is a pair $(X,r)$ where $X$ is an object in $\ccatC$ and $r \co A\to X$ is an algebra for the monad $\precompo p$ on $\hom\ccatC AX$.
Explicitly, a right $p$-module is a map $r \co A \to X$
  equipped with a 2-cell $\rho \co r \circ p \Rightarrow r$ satisfying the axioms for a right action of $p$.
The object $X$ is called the underlying object of the module.
We denote by $\LMod\ccatC XAp$ and $\RMod\ccatC ApX$ the categories of left and right modules with underlying object $X$.
The definitions of these categories are natural in $X$ and define pseudofunctors on $\ccatC\op$ and $\ccatC$, respectively.

\begin{defn} \label{defn:em-kleisli-object}
Let $p \co A \to A$ be a monad in $\ccatC$. 
\begin{itemize}
\item An \emph{Eilenberg--Moore object} for $p$ is an object $A^p$ and a left $p$-module $u \co A^p \to A$ which is universal, in the sense that composition with $u$ induces an equivalence of categories
\[
\hom\ccatC X{A^p} \stto \LMod\ccatC XAp \,.
\]
\item\label{defn:em-kleisli-object:kleisli} A \myemph{Kleisli object} for $p$ is an object $A_p$ and a right $p$-module $f \co A \to A_p$ which is universal, in the sense that composition with $f$ induces an equivalence of categories
\[
\hom\ccatC {A_p}X \stto \RMod\ccatC ApX \,.
\]
\end{itemize}
\end{defn}

Explicitly, an Eilenberg--Moore object as above has the universal property that for every left $p$-module $\ell \co X \to A$ we have an essentially unique map $\ell^p \co X \to A^p$ making the following diagram commute up to unique isomorphism:
\[
\begin{tikzcd}
& A^p \ar[d, "u"]\\
X \ar[r,"\ell"'] \ar[ru,dotted ,"\ell^p"]& A  \,.
\end{tikzcd}
\]
By choosing $\ell \co X\to A$ to be $p \co A\to A$ with the canonical  left action of $p$ given by the monad multiplication, one can show that the map $p^p \co A\to A^p$ is left adjoint to $u$ in $\ccatC$ \cite[Theorem 2]{StreetR:fortm}. 
Similarly, a Kleisli object has the universal property that for every right $p$-module $r \co A \to X$ we have an essentially unique map $r_p \co A_p \to X$ making the following diagram commute up to unique isomorphism:
\[
\begin{tikzcd}
A_p \ar[dr, dotted, "r_p"] & \\
A \ar[r, "r"'] \ar[u, "f"] & X  \,.
\end{tikzcd}
\]
Choosing $r \co A\to X$ to be $p \co A\to A$ with the canonical right action of $p$, the map $p_p \co A_p\to A$ is a right adjoint to $f$ in $\ccatC$ \cite[Theorem 2]{StreetR:fortm}.
It will be useful to recall that Eilenberg--Moore objects are a lax bicolimit and Kleisli objects are a lax limit~\cite{StreetR:fortm}. Hence, the former are preserved by left biadjoints
and the latter are preserved by right biadjoints.

Eilenberg--Moore and Kleisli objects for a monad in $\CAT$ are given by the category of Eilenberg--Moore algebras and the Kleisli category of the monad, respectively.
In other 2-categories the situation may be quite different, as we shall see.

\begin{defn} \label{defn:bimodule-in-k} 
Let $p \co A \to A$ and $q \co B \to B$ be two monads in $\ccatC$. 
A \emph{$(q,p)$-bimodule} is a triple 
$(f, \lambda, \rho)$ 
of a map $f \co A \to B$ equipped with the structure of a left $q$-module $\lambda \co q \circ f \Rightarrow f$ and a right $p$-module 
$\rho \co f \circ p \Rightarrow f$ which commute with each other, in the sense that the diagram
\[
\begin{tikzcd} 
q \circ f \circ p \ar[r, "f \circ \rho"] \ar[d, "\lambda \circ f"'] & q \circ f \ar[d, "\lambda"] \\
f \circ p \ar[r, "\rho"'] & f 
\end{tikzcd}
\]
commutes.
\end{defn}

There is then an evident category $\LRMod\ccatC ApBq$ of $(q,p)$-bimodules and bimodules morphisms.

\begin{rmk}
Our notation $\RMod\ccatC ApX$, $\LMod\ccatC XBq$, and $\LRMod\ccatC ApBq$ for the categories of 
left modules, right modules, and bimodules are consistent with the superscript notation of Eilenberg--Moore objects,
since these categories are Eillenberg--Moore objects in $\CAT$.
\end{rmk}

We record the following result for future reference.
\begin{lem}
\label{lem:bim=k2em}
If $\ccatC$ has Eilenberg--Moore and Kleisli objects, the universal property of these objects gives an equivalence of categories
\[
\hom\ccatC{A_p}{B^q} \simeq \LRMod\ccatC ApBq \,. 
\]
\end{lem}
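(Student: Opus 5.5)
We obtain the equivalence by composing two equivalences, one coming from each universal property, and then checking that the two module structures they produce are compatible.

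First apply the Eilenberg--Moore universal property of $u \co B^q \to B$ with $X = A_p$. This gives an equivalence
\[
\hom\ccatC{A_p}{B^q} \simeq \LMod\ccatC{A_p}Bq \,,
\]
pseudonatural in the variable $A_p$. Next, observe that $\LMod\ccatC{A_p}Bq$ is the category of algebras for the monad $\postcompo q$ on $\hom\ccatC{A_p}B$. Apply the Kleisli universal property of $f \co A \to A_p$ with $X = B$, which gives an equivalence
\[
\Phi \co \hom\ccatC{A_p}B \simeq \RMod\ccatC ApB \,,
\]
induced by precomposition $g \mapsto g\circ f$. The right action is obtained by whiskering $g$ with the action $f \circ p \Rightarrow f$.

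The key point is that $\Phi$ commutes with postcomposition by $q$. Up to the associativity constraints of $\ccatC$, precomposition with $f$ commutes with postcomposition with $q$: $(q\circ g)\circ f \cong q \circ (g \circ f)$. Moreover, the right $p$-action on $q\circ g\circ f$ is $q$ whiskered with the action on $g\circ f$. Hence $\Phi$ lifts to an equivalence between the categories of algebras for $\postcompo q$ on each side. The monad $\postcompo q$ on $\RMod\ccatC ApB$ sends $(h,\rho)$ to $(q\circ h, q\circ\rho)$. An algebra for it is a right $p$-module $h$ with a left $q$-action $\lambda$ such that $\lambda$ is a morphism of right $p$-modules. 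This compatibility condition is exactly the commuting square of \cref{defn:bimodule-in-k}. So the algebras are precisely $(q,p)$-bimodules, giving
\[
\LMod\ccatC{A_p}Bq \simeq \LRMod\ccatC ApBq \,.
\]

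The main step requiring care is this lifting. It rests on a general fact: if an equivalence of categories commutes with given monads up to a coherent isomorphism, that is, it carries a monad-morphism structure which is invertible, then it induces an equivalence of the categories of algebras. So I must check two things. First, the comparison isomorphism $(q\circ g)\circ f\cong q\circ(g\circ f)$ is compatible with the monad multiplication and unit of $q$; this follows from the coherence of the associator. Second, since $\Phi$ is full and faithful, a left $q$-action on $g$ corresponds bijectively to a left $q$-action on $g\circ f$ that is a map of right $p$-modules, and the algebra axioms transfer across $\Phi$.

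Composing the two equivalences yields $\hom\ccatC{A_p}{B^q}\simeq\LRMod\ccatC ApBq$. Explicitly, it sends $g$ to $u\circ g\circ f$ with its evident actions. Alternatively, one could apply the Kleisli property first, using $\RMod\ccatC Ap{B^q}$; the argument is symmetric.
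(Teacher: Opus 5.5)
Your argument is correct and is the argument the paper has in mind. The paper gives no written proof: it simply appeals to the Eilenberg--Moore universal property of $u\colon B^q\to B$ and the Kleisli universal property of $f\colon A\to A_p$. You supply the missing detail, namely that the Kleisli equivalence $g\mapsto g\circ f$ is compatible, via the associator, with the monad $q\circ(-)$, so it lifts to algebras, and those algebras are exactly the $(q,p)$-bimodules. This yields the equivalence $g\mapsto u\circ g\circ f$ with the whiskered actions, which is how the paper later uses it in \cref{rmk:k2em}.
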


\begin{rmk}
\label{rmk:k2em}
For any monad $(A,p)$, the endomorphism $p \co A\to A$ defines a $(p,p)$-bimodule which, under the equivalence of \cref{lem:bim=k2em},  corresponds to a canonical morphism $i_A \co A_p\to A^p$ such that the composition $A\to A_p\to A^p\to A$ is equivalent to $p$.
This provides a commutative diagram
\[
\begin{tikzcd}
A_p\ar[rr,"i_A"]&& A^p\ar[d,"u"]
\\
A\ar[rr,"p"'] \ar[u,"f"] \ar[rru,"{p^p}" description, near start]
&& A\ar[from=llu,"{p_p}" description, near end, crossing over]
\,.
\end{tikzcd}
\]
\end{rmk}

\subsection{Monads in tame bicategories}
\label{sec:monad-tame}

Here, we are interested in Eilenberg--Moore and Kleisli objects in
bicategories whose hom-categories possess particular classes of colimits and
whose composition functors preserve them in each variable. Let us recall the
notion of a tame bicategory~\cite[Definition~4.2.1]{GambinoN:opebaf}.

\begin{defn}  \label{defn:tame}
\leavevmode
\begin{itemize} 
\item A bicategory $\ccatC$ is said to be \emph{tame} if it has local reflexive coequalisers, \ie~for every $A$ and $B$ in $\ccatC$, the hom-category $\hom\ccatC AB$
has reflexive coequalisers and the composition functors of $\ccatC$ preserve reflexive coequalizers in each variable.
\item A pseudofunctor $F \co \ccatC \to \ccatD$ between tame bicategories is said to be \emph{tame} if, for every $A$ and $B$ in $\ccatC$, the
functor $F_{A,B} \co \hom\ccatC AB \to \hom\ccatD{FA}{FB}$ preserves local reflexive coequalizers.
\end{itemize} 
\end{defn}

\begin{rmk}
\label{rmk:sub-tame=tame}
Since the definition of tameness depends only on the hom categories, any full sub-bicategory of a tame bicategory is tame.
\end{rmk}

Examples of tame categories will be given in \cref{lem:tame-coc,cor:RIG-is-tame}.

\medskip
One remarkable aspect of tame bicategories is that
Eilenberg--Moore and Kleisli objects in them coincide, 
as shown in \cite[Corollary~5.2.12]{GambinoN:opebaf}. 

\begin{lem}
\label{lem:em=k}
Given a monad $(A,p)$ in a tame bicategory:
\begin{enumerate}
\item If $f \co A\to A_p$ is a Kleisli object, then its right adjoint $p_p \co A_p\to A$ is an Eilenberg--Moore object for $p$.
\item If $u \co A^p\to A$ is an Eilenberg--Moore object, then its left adjoint $p^p \co  A\to A^p$ is a Kleisli object for $p$.
\item If both Eilenberg--Moore and Kleisli objects of $p$ exist, then the canonical morphism $i_A \co A_p\to A^p$ of \cref{rmk:k2em} is invertible.
\end{enumerate}
\end{lem}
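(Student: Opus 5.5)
We prove (i) directly, obtain (ii) by the dual argument, and deduce (iii) from either of them.

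For (i), let $f \co A \to A_p$ be a Kleisli object, with right adjoint $g = p_p \co A_p \to A$ as in the discussion after \cref{defn:em-kleisli-object}, so that $g\circ f \cong p$. The map $g$ carries a left $p$-module structure $p\circ g \cong g f g \Rightarrow g$, induced by the counit. We must show that for every $X$, composition with $g$ gives an equivalence $\hom\ccatC X{A_p} \to \LMod\ccatC XAp$.

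For the inverse, send a left module $(\ell,\lambda)$ to the reflexive coequaliser in $\hom\ccatC X{A_p}$ of the pair
\[
f\circ p\circ \ell \rightrightarrows f\circ \ell .
\]
The two maps are $f\lambda$ and the composite $fp\ell \cong fgf\ell \Rightarrow f\ell$ given by the right $p$-action of $f$ (the counit). The common section is $f\eta\ell$. This coequaliser exists because $\ccatC$ is tame.

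We then check the two composites.
\begin{itemize}
\item Applying $g\circ-$ preserves the coequaliser, since composition preserves reflexive coequalisers in each variable. We get the coequaliser of $pp\ell \rightrightarrows p\ell$, with maps $p\lambda$ and $\mu\ell$. This is a split (Beck-type) coequaliser with coequaliser $\lambda \co p\ell\to\ell$, split by $\eta\ell$ and $\eta p\ell$. So $g$ applied to the construction recovers $(\ell,\lambda)$, including its module structure.
\item Conversely, for $h \co X\to A_p$ we need $h$ to be the coequaliser of $fgfgh \rightrightarrows fgh$. This is the key step. Here I use the universal property of the Kleisli object in the form: $\hom\ccatC{A_p}{A_p}\simeq \RMod\ccatC Ap{A_p}$, together with the fact that $\id_{A_p}$ corresponds to $f$ with its canonical action. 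The map $fgf \rightrightarrows f \to \id$ exhibits $\id_{A_p}$ as the coequaliser of $fgfg\rightrightarrows fg$.
\end{itemize}

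To justify the last claim, first restrict along $f$: it gives $fpp\rightrightarrows fp\to f$, which is a split coequaliser in $\RMod\ccatC Ap{A_p}$ (free-module resolution). The equivalence $\hom\ccatC{A_p}{A_p}\simeq\RMod\ccatC Ap{A_p}$ is given by precomposition with $f$, which preserves reflexive coequalisers by tameness. We also need the forgetful functor from right modules to create reflexive coequalisers; this holds because $-\circ p$ preserves them. Together these show the colimit is detected. Precomposing with $h$, which again preserves the coequaliser, then finishes the argument.

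This identification of $\id_{A_p}$ as a coequaliser (a "density" of $f$) is the main obstacle; the rest is formal. Naturality in $X$ of both constructions is routine.

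Part (ii) is formally dual: reverse the roles of pre- and postcomposition, using the right $p$-module $p^p\co A\to A^p$ and coequalisers $r p p^p \rightrightarrows r p^p$. Tameness is self-dual in this sense, since it only concerns local colimits preserved in each variable.

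For (iii), by (i) the map $p_p \co A_p\to A$ is an Eilenberg--Moore object, so there is a unique-up-to-iso equivalence $A_p\simeq A^p$ over $A$. Both this equivalence and $i_A$ correspond under \cref{lem:bim=k2em} to the $(p,p)$-bimodule $p$: each restricts along $f$ and composes with $u$ to give $p$ with its canonical actions. Hence $i_A$ is isomorphic to that equivalence, and is therefore invertible.
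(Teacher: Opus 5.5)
The paper gives no argument of its own for this lemma; it defers to \cite[Corollary~5.2.12]{GambinoN:opebaf}. Your proposal is a correct, self-contained proof, and it has the advantage of showing exactly where tameness is used. In (i), the inverse $\ell\mapsto f\bimcomp p\ell$ is the right construction: it is the reflexive coequaliser of $f\circ p\circ\ell\rightrightarrows f\circ\ell$, with common section $f\eta\ell$. Applying $g\circ-$ turns it into the Beck fork $pp\ell\rightrightarrows p\ell\xrightarrow{\lambda}\ell$. The other composite reduces, as you say, to the density fork $fgfg\rightrightarrows fg\xrightarrow{\varepsilon}\id_{A_p}$. This fork is a coequaliser because restricting along $f$ gives $fpp\rightrightarrows fp\xrightarrow{\rho}f$, which is split by $f\eta$ and $fp\eta$, and $-\circ f$ is an equivalence onto right modules. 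Whiskering by $h$ then uses tameness together with the common section $f\eta g$. So tameness enters only in the existence of $f\bimcomp p\ell$ and in its preservation by $g\circ-$ and $-\circ h$; the density fork itself is split and needs no hypothesis. Part (iii) follows from (i) and \cref{lem:bim=k2em} exactly as you argue.

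A few slips to fix.
\begin{itemize}
\item In (ii), the coequaliser ``$r\,p\,p^p\rightrightarrows r\,p^p$'' does not typecheck. The dual construction is the reflexive coequaliser of $r\circ p\circ u\rightrightarrows r\circ u$ in $\ccatC(A^p,X)$, where the Eilenberg--Moore map $u$ plays the role of $f$ and $p^p$ plays the role of $g$.
\item The cleanest way to justify ``formally dual'' is to say that (ii) is (i) applied to the bicategory obtained from $\ccatC$ by reversing 1-cells. That bicategory is still tame, and the reversal exchanges left and right modules, Eilenberg--Moore and Kleisli objects, and left and right adjoints.
\item ``$fgf\rightrightarrows f\to\id$'' should read $fgfg\rightrightarrows fg\to\id_{A_p}$.
\item In the density step, what you need is that $-\circ f$ \emph{reflects} the coequaliser, which holds because it is an equivalence onto right modules. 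Its preserving reflexive coequalisers is not the relevant property there.
\item The claim that $g\circ-$ recovers the module structure of $\ell$ deserves a line. Naturality of $\varepsilon$ shows that the transported action $\alpha$ satisfies $\alpha\cdot p\lambda=\lambda\cdot p\lambda$. Since $p\lambda$ is split epi via $p\eta\ell$, it follows that $\alpha=\lambda$.
\end{itemize}
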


\begin{defn} A tame bicategory $\ccatC$ is said to be \emph{Eilenberg--Moore--Kleisli complete} if every monad in~$\ccatC$ has an Eilenberg--Moore (and hence a Kleisli) object.
\end{defn} 

Also recall that a tame pseudofunctor between tame bicategories preserves Eilenberg--Moore objects~\cite[Proposition~5.2.13]{GambinoN:opebaf}. For tame bicategories $\ccatC$ and $\ccatD$, we write $\hom\TAME\ccatC\ccatD$ for the
full sub-bicategory of the hom-bicategory~$\fun\ccatC\ccatD$ spanned by tame pseudofunctors.

For a tame bicategory $\ccatC$, its completion under Eilenberg--Moore objects as a tame bicategory is a tame bicategory $\EMK(\ccatC)$ with a tame pseudofunctor $J \co \ccatC \to \EMK(\ccatC)$ 
which is universal, in the sense that for every tame bicategory with Eilenberg--Moore objects $\ccatD$,
composition with $J$ induces an equivalence of bicategories
\[
\begin{tikzcd}[column sep = large]
\hom\TAME{\EMK(\ccatC)}\ccatD \ar[r, "(-) \circ J"]  & \hom\TAME\ccatC\ccatD  \,.
\end{tikzcd}
\]
Since Eilenberg--Moore objects and Kleisli objects coincide in tame bicategories,
$\EMK(\ccatC)$ is also the completion of $\ccatC$ under Kleisli objects as a tame bicategory. This
is not to be confused with the Eilenberg--Moore completion of $\ccatC$ as a mere bicategory,
as studied by Lack and Street in \cite{LackS:fortmII}, which has a different universal property, and does not coincide with $\EMK(\ccatC)$ in general.

As shown in~\cite[Section~5.4]{GambinoN:opebaf}, the Eilenberg--Moore
completion of tame bicategories exists and can be given an explicit description, which we now briefly
review. 
Let $\BimK$ be the bicategory with objects monads in $\ccatC$ and hom-category of maps between monads $(A,p)$ and $(B,q)$ given by the category of $(q,p)$-bimodules:
\[
\Hom\BimK{(A,p)}{(B,q)}
\ \defeq\ 
\LRMod\ccatC ApBq \,.
\]
For $(C,r)$ a third monad, the composition of bimodules is defined by 
\begin{align}
\label{bim-composition}
\begin{split}
\LRMod\ccatC BqCr
\times
\LRMod\ccatC ApBq
&\tto \LRMod\ccatC ApCr\\
(g,f) &\mto g\bimcomp q f
\end{split}
\end{align}
where $g\bimcomp q f$ is a $(r,p)$-bimodule whose underlying map is given by the reflexive coequaliser in $\hom\ccatC AC$:
\begin{equation}
\label{eq:bimcomp}
\begin{tikzcd}[column sep = large]
g \circ q \circ f \ar[r, bend right = 40, "g \circ \lambda"'] \ar[r, shift left = 0, bend left = 30, "\rho \circ f"] &
g \circ f \ar[l, "g \circ \eta \circ f"] \ar[r] &
g \bimcomp q f \,.
\end{tikzcd}
\end{equation}
This exists by the assumption that $\ccatC(A,C)$ has reflexive coequalisers, and can be equipped with the structure of an $(r,p)$-bimodule by the assumption that composition in $\ccatC$ preserves reflexive coequalisers.
The identity map of the object $(A,p)$ is given by $p \co A \to A$, viewed as a $(p,p)$-bimodule via the monad multiplication. 
It can be shown that $\BimK$ is tame and that there is a pseudofunctor
\[
J \co \ccatC \stto \BimK \,,
\]
defined by sending an object $A$ in $\ccatC$ to the identity monad $(A,\id_A)$ in $\BimK$, which is tame and fully faithful~\cite[Proposition~5.3.1]{GambinoN:opebaf}. 

\begin{prop}[{\cite[Theorem~5.4.2]{GambinoN:opebaf}}]
\label{prop:emk=bim} Let $\ccatC$ be a tame bicategory.
The tame bicategory $\Bim(\ccatC)$, equipped with the 
 pseudofunctor $J \co \ccatC\to \Bim(\ccatC)$, is an Eilenberg--Moore--Kleisli completion of $\ccatC$.
\end{prop}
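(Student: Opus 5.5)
The plan is to prove \cref{prop:emk=bim} in three steps. First, $\BimK$ is tame and Eilenberg--Moore--Kleisli complete. Second, every object of $\BimK$ is built from the image of $J$ by a single Kleisli (equivalently Eilenberg--Moore) construction. Third, the restriction functor $(-)\circ J$ is a biequivalence on tame pseudofunctors into tame Eilenberg--Moore--Kleisli complete targets.

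For tameness, reflexive coequalisers of $(q,p)$-bimodules are computed on underlying maps in $\hom\ccatC AB$. The bimodule actions descend because composition in $\ccatC$ preserves reflexive coequalisers in each variable. The composite $g\bimcomp q f$ preserves them in each variable because it is itself a reflexive coequaliser, and colimits commute with colimits.

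For completeness, take a monad in $\BimK$ on $(A,p)$, namely a $(p,p)$-bimodule $m$ with monoid structure for $\bimcomp p$. The pair $(A,m)$ is then a monad in $\ccatC$: its unit is $\id_A \Rightarrow p \Rightarrow m$, and its multiplication is $m\circ m\to m\bimcomp p m\to m$. I claim that $(A,m)$, with $m$ regarded as a $(m,p)$-bimodule $(A,p)\to(A,m)$, is a Kleisli object. To see this, I compare right $m$-modules in $\BimK$, i.e.\ $(r,p)$-bimodules $g$ with $g\bimcomp p m\Rightarrow g$, against $(r,m)$-bimodules in $\ccatC$. The key identity is the reflexive-coequaliser isomorphism $g\bimcomp m m\cong g$, together with $g\bimcomp p m$ computed via the $p$-action. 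This is the step I expect to be the main obstacle, since it requires carefully checking coherence of the actions and the identifications of iterated coequalisers. I would first try the special case $p=\id$ and then reduce the general case to it.

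For the universal property, let $F\co\ccatC\to\ccatD$ be tame, with $\ccatD$ tame and Eilenberg--Moore--Kleisli complete. I extend $F$ by sending $(A,p)$ to a Kleisli object $FA_{Fp}$. A $(q,p)$-bimodule $f$ is sent to the map $FA_{Fp}\to FB_{Fq}$ induced by \cref{lem:bim=k2em} and \cref{lem:em=k}: $Ff$ is an $(Fq,Fp)$-bimodule, and the Kleisli and Eilenberg--Moore objects coincide in $\ccatD$. Pseudofunctoriality uses that $F$ preserves the coequaliser \eqref{eq:bimcomp}, and that composition in $\ccatD$ of the induced maps computes $Fg\bimcomp{Fq}Ff$. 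That last fact is the tame analogue of Kleisli maps composing via coequalisers, and it follows from \cref{lem:em=k}(iii).

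The extension is tame and satisfies $\hat F\circ J\simeq F$, because the Kleisli object of an identity monad is trivial. Uniqueness up to equivalence comes from the second step: any tame extension preserves Kleisli objects, since it preserves Eilenberg--Moore objects, and every object of $\BimK$ is a Kleisli object of a monad on an object $JA$. Full faithfulness on transformations follows from the same argument applied to the 2-universal property of Kleisli objects.
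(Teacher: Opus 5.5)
Your outline is correct and follows essentially the argument behind the cited result \cite[Theorem~5.4.2]{GambinoN:opebaf}, since the paper gives no proof of its own: you show $\BimK$ is tame and Eilenberg--Moore--Kleisli complete via $(A,m)$, your extension $\hat F$ is just $\Bim(F)$ followed by a pseudo-inverse of the equivalence $J_{\ccatD}$ (\cf \cite[Proposition~5.3.9]{GambinoN:opebaf}), and uniqueness comes from each $(A,p)$ being the Kleisli object of $J(p)$ on $JA$. One correction: the compatibility of $\hat F$ with composition does not follow from the bare invertibility in \cref{lem:em=k}(iii); what you need is that $\id_{FB^{Fq}}$ is the reflexive coequaliser of $l u l u \rightrightarrows l u$ for the adjunction $l \dashv u$ of the Eilenberg--Moore object (this is what underlies \cref{lem:em=k}), which after pre- and post-composition with your induced maps, using tameness of $\ccatD$, yields exactly the coequaliser \eqref{eq:bimcomp} computing $Fg \bimcomp{Fq} Ff$.
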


We establish some immediate consequences of the
results in~\cite{GambinoN:opebaf} which will be useful later.

\begin{lem}
\label{lem:EMC-embedding}
Let $F \co \ccatC \to \ccatD$ be a pseudofunctor between tame bicategories. If
$F$ is fully faithful (thus tame), then $\EMK(F) \co \EMK(\ccatC) \to \EMK(\ccatD)$ is also fully faithful.
\end{lem}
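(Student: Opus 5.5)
We use the explicit model $\EMK(\ccatC) = \BimK$ of \cref{prop:emk=bim}, and similarly for $\ccatD$. First we identify $\EMK(F)$ concretely. Being fully faithful, $F$ induces equivalences $\hom\ccatC AB \simeq \hom\ccatD{FA}{FB}$. Equivalences of categories preserve all colimits, in particular reflexive coequalisers, so $F$ is tame. The pseudofunctor $\EMK(F)$ is then the essentially unique tame extension of $J_\ccatD \circ F$ along $J_\ccatC$. Concretely, it sends a monad $(A,p)$ to $(FA,Fp)$, since a pseudofunctor preserves monoids in hom-categories. On hom-categories it sends a $(q,p)$-bimodule $(f,\lambda,\rho)$ to the $(Fq,Fp)$-bimodule $(Ff, F\lambda, F\rho)$, using the pseudofunctoriality constraints $Fq\circ Ff \cong F(q\circ f)$ and $Ff\circ Fp \cong F(f\circ p)$. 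Preservation of the bimodule composite \eqref{eq:bimcomp} follows from tameness of $F$: the coequaliser defining $g\bimcomp q f$ is carried to the one defining $Fg \bimcomp{Fq} Ff$. One should check that this explicit pseudofunctor agrees with the one given by the universal property. This holds because both are tame, both restrict to $J_\ccatD\circ F$ along $J_\ccatC$, and every object of $\BimK$ is an Eilenberg--Moore object of an object in the image of $J_\ccatC$, preserved by tame pseudofunctors.

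It then remains to show that for monads $(A,p)$ and $(B,q)$ in $\ccatC$ the functor
\[
\LRMod\ccatC ApBq \stto \LRMod\ccatD{FA}{Fp}{FB}{Fq}
\]
is an equivalence. The key observation is that a $(q,p)$-bimodule is a map $f\co A\to B$ with 2-cells $q\circ f\Rightarrow f$ and $f\circ p\Rightarrow f$, subject to equations between 2-cells in $\hom\ccatC AB$. Since $F_{A,B}$, $F_{A,A}$ and $F_{B,B}$ are equivalences and the coherence isomorphisms of $F$ are natural, the data and axioms transfer in both directions.

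\textbf{Fully faithful.} A morphism of bimodules is a 2-cell $\alpha\co f\Rightarrow f'$ compatible with the actions. Since $F_{A,B}$ is fully faithful, 2-cells $Ff\Rightarrow Ff'$ correspond bijectively to 2-cells $f \Rightarrow f'$. Compatibility with the actions is an equation in $\hom\ccatD{FA}{FB}$ between composites involving $F(q\circ f)\cong Fq\circ Ff$. Faithfulness of $F_{A,B}$ reflects this equation back to $\ccatC$.

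\textbf{Essentially surjective.} Let $(g,\lambda',\rho')$ be an $(Fq,Fp)$-bimodule. Choose $f$ with an isomorphism $Ff\cong g$, and transport the actions along it. This gives 2-cells $F(q\circ f)\cong Fq\circ Ff\Rightarrow Ff$, and similarly for the right action. Fullness of $F_{A,B}$ lifts these to $\lambda\co q\circ f\Rightarrow f$ and $\rho\co f\circ p\Rightarrow f$. The associativity, unit and compatibility axioms hold after applying $F$, so they hold in $\ccatC$ by faithfulness. By construction, $F(f,\lambda,\rho)\cong (g,\lambda',\rho')$ as bimodules.

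The main obstacle is the bookkeeping with the coherence isomorphisms of $F$ when checking that the transported axioms match. There is also the identification of the abstract $\EMK(F)$ with the explicit bimodule-level functor. For the latter, we rely on the uniqueness part of the universal property of \cref{prop:emk=bim} rather than computing directly.
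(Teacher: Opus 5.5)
Your proposal is correct and follows the route the paper indicates. The paper leaves the proof to the reader, noting that it is easiest to check in the explicit model $\Bim(\ccatC)$. You do exactly that: because $F_{A,B}$ is an equivalence, the induced functor $\LRMod\ccatC ApBq \to \LRMod\ccatD{FA}{Fp}{FB}{Fq}$ is fully faithful and essentially surjective, and you soundly justify identifying $\EMK(F)$ with this explicit functor through the uniqueness in the universal property.
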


\begin{proof}
This is a general fact about free completions, left to the reader.
The explicit description of $\EMK(\ccatC)$ in terms of $\Bim(\ccatC)$ makes it even easier to check.
\end{proof}

The next result does not seem to appear in the literature. It will be used in the proof of \cref{thm:opdrig=em-frig}.

\begin{prop}
\label{prop:EMC-embedding2}
Let $F \co \ccatC \to \ccatD$ be a pseudofunctor between tame categories.
Assume that $F$ is fully faithful and that $\ccatD$ is Eilenberg--Moore--Kleisli complete. Then $\EMK(\ccatC)$ is equivalent to the full sub-bicategory of $\ccatD$ spanned by the Eilenberg--Moore--Kleisli objects of monads in $\ccatC$.
\end{prop}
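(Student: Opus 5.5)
We plan to construct a comparison pseudofunctor out of $\EMK(\ccatC)$, using the universal property of the completion from \cref{prop:emk=bim}, and then check that it is fully faithful with essential image the stated sub-bicategory. Since $\ccatD$ is tame and Eilenberg--Moore--Kleisli complete, and $F$ is fully faithful and hence tame, the universal property gives a tame pseudofunctor $\bar F \co \EMK(\ccatC) \to \ccatD$ with $\bar F \circ J \simeq F$. We work with the explicit model $\EMK(\ccatC) = \Bim(\ccatC)$ and identify $\bar F$ concretely. An object of $\Bim(\ccatC)$ is a monad $(A,p)$ in $\ccatC$. Because $F$ is a pseudofunctor, $(FA, Fp)$ is a monad in $\ccatD$, and $\bar F(A,p)$ is its Eilenberg--Moore object $(FA)^{Fp}$, which by \cref{lem:em=k} is also its Kleisli object $(FA)_{Fp}$. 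We will verify this identification by noting that in $\Bim(\ccatC)$ the object $(A,p)$ is the Eilenberg--Moore object of the monad $Jp$ on $JA$, and that tame pseudofunctors preserve Eilenberg--Moore objects (\cite[Proposition~5.2.13]{GambinoN:opebaf}).

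This identification already settles essential surjectivity onto the full sub-bicategory $\ccatD'$ spanned by Eilenberg--Moore--Kleisli objects of monads in $\ccatC$. Here we read ``monads in $\ccatC$'' as monads of the form $(FA,Fp)$. Every monad on an object $FA$ is of this form, since $F$ is fully faithful on hom-categories and these hom-categories carry the composition monoidal structures compatibly.

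For full faithfulness we fix monads $(A,p)$ and $(B,q)$ in $\ccatC$ and compute using \cref{lem:bim=k2em} in $\ccatD$. We use the Kleisli description of the source and the Eilenberg--Moore description of the target:
\[
\hom\ccatD{(FA)_{Fp}}{(FB)^{Fq}} \simeq \LRMod\ccatD{FA}{Fp}{FB}{Fq}.
\]
A $(Fq,Fp)$-bimodule is a $1$-cell $FA\to FB$ together with action $2$-cells satisfying the axioms, and these axioms are stated purely in terms of the hom-categories and composition. Since $F_{A,B}$ is an equivalence compatible with composition, it induces an equivalence
\[
\LRMod\ccatC ApBq \simeq \LRMod\ccatD{FA}{Fp}{FB}{Fq}.
\]
The left-hand side is $\Hom{\Bim(\ccatC)}{(A,p)}{(B,q)}$. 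It remains to check that this composite equivalence agrees, up to isomorphism, with the action of $\bar F$ on hom-categories.

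The main obstacle is this compatibility check: we must show that $\bar F$ sends a bimodule $f$ to the map $(FA)_{Fp}\to (FB)^{Fq}$ classified by $Ff$. Our approach is to write $f$ in $\Bim(\ccatC)$ as the composite of three maps: the right adjoint $(A,p)\to(A,1)$, then $Jf$, then the left adjoint $(B,1)\to(B,q)$. This factorisation expresses $f$ through the universal modules together with the bimodule structure, and $\bar F$ preserves each piece, because it preserves Eilenberg--Moore and Kleisli objects together with their universal modules and adjunctions. Once this is established, $\bar F$ is fully faithful with essential image $\ccatD'$, so it restricts to the required biequivalence $\EMK(\ccatC)\simeq \ccatD'$.
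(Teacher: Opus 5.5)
Most of your outline is sound. Extending $F$ to $\bar F\colon \Bim(\ccatC)\to\ccatD$, identifying $\bar F(A,p)\simeq (FA)^{Fp}\simeq (FA)_{Fp}$, describing the essential image, and establishing the equivalence between $(q,p)$-bimodules in $\ccatC$ and $(Fq,Fp)$-bimodules in $\ccatD$ are all fine. You are also right that an abstract equivalence of hom-categories does not show that $\bar F$ is locally an equivalence.

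\textbf{The gap.} Your argument for the compatibility fails, because the factorisation you propose is false. In $\Bim(\ccatC)$ the right adjoint $u_A\colon (A,p)\to(A,1)$ has underlying 1-cell $p$ with its right $p$-action. The left adjoint $k_B\colon (B,1)\to(B,q)$ has underlying 1-cell $q$ with its left $q$-action. Composing over identity monads involves no coequaliser, so $k_B\circ Jf\circ u_A$ is $q\circ f\circ p$ with the free $(q,p)$-bimodule structure. This is not $f$ in general: for $f=p$, the identity of $(A,p)$, you get $p\circ p\circ p$. As written, the full faithfulness of $\bar F$ is therefore not established.

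\textbf{The repair.} Whisker $f$ rather than factor it. The equivalence of \cref{lem:bim=k2em} is $g\mapsto u\circ g\circ k$. In $\Bim(\ccatC)$, for the monads $Jp$ and $Jq$, one has $u_B\circ f\circ k_A\cong q\bimcomp{q} f\bimcomp{p} p\cong Jf$, with $(Jq,Jp)$-bimodule structure given by $\lambda$ and $\rho$. Now use three facts: $\bar F$ is a pseudofunctor, $\bar F J\simeq F$, and $\bar F(k_A)$, $\bar F(u_B)$ are the Kleisli and Eilenberg--Moore maps of $Fp$ and $Fq$. Together they give $\bar F(u_B)\circ\bar F(f)\circ\bar F(k_A)\cong Ff$, with actions $F\lambda$ and $F\rho$, naturally in $f$. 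So the comparison square commutes by pseudofunctoriality alone, and two-out-of-three for equivalences gives full faithfulness. Alternatively, you could present $f$ as a reflexive coequaliser of free bimodules $q\circ q\circ f\circ p\circ p\rightrightarrows q\circ f\circ p$ and use tameness of $\bar F$, but that needs more bookkeeping.

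\textbf{Comparison with the paper.} The paper sidesteps this check altogether. It composes $\Bim(F)\colon\Bim(\ccatC)\to\Bim(\ccatD)$, which is fully faithful by \cref{lem:EMC-embedding}, with the equivalence $\Bim(\ccatD)\simeq\ccatD$, $(D,r)\mapsto D^r$. That equivalence holds because $\ccatD$ is Eilenberg--Moore complete \cite[Proposition~5.3.9]{GambinoN:opebaf}.
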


\begin{proof} 
We use \cref{prop:emk=bim} to prove the result with $\Bim(\ccatC)$ instead of $\EMK(\ccatC)$.
A tame bicategory $\ccatC$ is Eilenberg--Moore complete if and only if the inclusion $J \co \ccatC\to \Bim(\ccatC)$ is an equivalence~\cite[Proposition 5.3.9]{GambinoN:opebaf}. 
Under this equivalence, an object $(A, p)$ in $\Bim(\ccatC)$, given by an object $A$ in $\ccatC$ and a monad $p \co A \to A$ on it, is identified with its Eilenberg--Moore object $A^p$ in $\ccatC$.
Then the result follows from \cref{lem:EMC-embedding}.
\end{proof}

\section{The bicategory of 2-rigs} 
\label{sec:sym2rig}

\subsection{Cocontinuous monads}
\label{sec:monads-on-coc}

We denote by $\COC$ the large bicategory of categories with small colimits, cocontinuous functors, and natural transformations,  and by $\Pres$ its full subcategory spanned of (locally) presentable categories (we shall drop the `locally' from the name).

\begin{lem}
\label{lem:tame-coc}
The categories $\COC$ and $\Pres$ are tame.	
\end{lem}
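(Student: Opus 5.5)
To prove \cref{lem:tame-coc} we check the two conditions of \cref{defn:tame} directly on the hom-categories. Since $\Pres$ is a full sub-bicategory of $\COC$, \cref{rmk:sub-tame=tame} means it suffices to treat $\COC$.

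First we show that each hom-category $\hom\COC AB$ has reflexive coequalisers. Let $A,B$ be cocomplete and take a reflexive pair $\alpha,\beta \co F \Rightarrow G$ of natural transformations between cocontinuous functors $A\to B$. Since $B$ is cocomplete, we form the coequaliser $H$ pointwise in the functor category $\fun AB$, setting $H(a)$ to be the coequaliser of $\alpha_a,\beta_a$. We must check that $H$ is again cocontinuous. This holds because colimits commute with colimits: for a small diagram $a_\bullet$ in $A$, the object $\colim_i H(a_i)$ is the colimit of the coequaliser diagram $\colim_i F(a_i)\rightrightarrows \colim_i G(a_i)$. Cocontinuity of $F$ and $G$ identifies this diagram with $F(\colim a_i)\rightrightarrows G(\colim a_i)$, whose coequaliser is $H(\colim a_i)$. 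Because $\hom\COC AB$ is a full subcategory of $\fun AB$ containing $H$, the functor $H$ is also the coequaliser in $\hom\COC AB$. Reflexivity is not actually needed for this step.

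Next we show that composition preserves these coequalisers in each variable. Let $K \co B \to C$ be cocontinuous. Then $K\circ H$ is computed pointwise as $K(H(a))$. Since $K$ preserves coequalisers, $K\circ H$ is the pointwise coequaliser of $K\alpha, K\beta$, and hence the coequaliser in $\hom\COC AC$. For precomposition with $L \co D \to A$, the composite $H\circ L$ is the pointwise coequaliser evaluated at $L(d)$, so it is again the coequaliser. Notice that postcomposition uses the cocontinuity of $K$, while precomposition uses only the pointwise nature of the construction.

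The only delicate point is that $\COC$ has large hom-categories. Pointwise coequalisers still exist, however, because $B$ has all small colimits and the construction is performed objectwise. For $\Pres$ there is no further obstacle: the hom-categories are full subcategories of the same functor categories, and the same argument applies.
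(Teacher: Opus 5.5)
Your proof is correct and follows the paper's route. The hom-categories of $\COC$ have colimits computed pointwise, so coequalisers of cocontinuous functors remain cocontinuous, and composition preserves these coequalisers in each variable because colimits are pointwise and the postcomposed functor is cocontinuous. The paper says this in two lines for all small colimits at once; your version spells out the coequaliser case and deduces the case of $\Pres$ from \cref{rmk:sub-tame=tame}.
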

\begin{proof}
The hom-categories have all small colimits and therefore reflexive coequalisers.
And the composition of cocontinuous functors preserves all colimits (precisely  because they are cocontinuous functors).
\end{proof}

\begin{lem}
\label{lem:monadicity-coc}
The forgetful pseudofunctor $\COC\to\CAT$ is pseudomonadic and creates Eilenberg--Moore objects.
\end{lem}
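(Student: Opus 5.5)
The plan is to exhibit the left biadjoint, identify the induced pseudomonad, and check the bicategorical Beck conditions directly; creation of Eilenberg--Moore objects then follows from the same computation. The forgetful pseudofunctor $U\co\COC\to\CAT$ has a left biadjoint given by free cocompletion under small colimits. For a large category $\catA$, this is the category $P\catA$ of small presheaves, that is, small colimits of representables. The universal property holds because cocontinuous functors $P\catA\to\catB$ correspond to functors $\catA\to\catB$ by left Kan extension along Yoneda. This gives a Kock--Zöberlein pseudomonad $P$ on $\CAT$.

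I will not verify the pseudomonadicity conditions abstractly. Instead I will use that $P$ is lax idempotent (KZ), so that a pseudoalgebra structure on $\catA$ is a left adjoint to $\emb_\catA\co\catA\to P\catA$. Such a left adjoint exists if and only if $\catA$ has small colimits. Its value on a presheaf $X$ is $\colim X$, and the counit is invertible because Yoneda is fully faithful. Algebra structure is therefore property-like, essentially unique when it exists. For a morphism between two such algebras, a pseudomorphism structure is likewise unique and exists exactly when the canonical comparison $\colim\circ Pf\Rightarrow f\circ\colim$ is invertible, that is, when $f$ is cocontinuous. Algebra 2-cells are just natural transformations. Together these identify $\mathrm{Ps}\text{-}P\text{-}\mathrm{Alg}$ with $\COC$ over $\CAT$, so the comparison pseudofunctor is a biequivalence.

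For creation of Eilenberg--Moore objects, take a monad $p$ on $\catA$ in $\COC$, i.e.\ a cocontinuous monad on a cocomplete category. Its Eilenberg--Moore object in $\CAT$ is $\catA^p$, the category of algebras. I will show the following three facts.
\begin{enumerate}
\item $\catA^p$ has small colimits and the forgetful functor $u\co\catA^p\to\catA$ creates them. This is the standard fact that the forgetful functor of a monad creates whatever colimits $p$ preserves.
\item For cocomplete $X$, a functor $g\co X\to\catA^p$ is cocontinuous if and only if $u\circ g$ is, since $u$ creates colimits.
\item The left $p$-modules $\ell\co X\to\catA$ in $\COC$ are exactly the cocontinuous left modules in $\CAT$.
\end{enumerate}
Combining (ii) and (iii), the equivalence $\hom\CAT X{\catA^p}\simeq\LMod\CAT X\catA p$ restricts to an equivalence between the corresponding full subcategories of cocontinuous maps. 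This shows that $u$ is an Eilenberg--Moore object in $\COC$ and that $U$ creates it.

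The main obstacle is size. $P\catA$ must be taken as small presheaves so that it stays in $\CAT$ and is actually cocomplete. This is standard, and I would cite it rather than reprove it. The rest is routine doctrinal adjunction for KZ pseudomonads.
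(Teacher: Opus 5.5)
Your proof is correct, but it takes a different route from the paper. The paper proves nothing directly. It cites \cite{AnelM:sym2rcc} for pseudomonadicity, then derives creation of Eilenberg--Moore objects formally: it invokes the general theory of limits in bicategories of pseudo-algebras \cite{BlackwellR:twodmt,CreurerI:bectpm} and the fact that Eilenberg--Moore objects are lax limits. You instead prove pseudomonadicity yourself, using the Kock--Z\"oberlein description of pseudo-algebras and pseudomorphisms for the small-presheaf pseudomonad. You then prove the Eilenberg--Moore statement directly and independently of pseudomonadicity. That argument uses the classical fact that the forgetful functor of a monad creates the colimits the monad preserves, together with local fullness of $\COC$ in $\CAT$.

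Your version is more elementary and more informative. It exhibits the Eilenberg--Moore object explicitly as $\catA^p$, with colimits computed in $\catA$, which is exactly what \cref{prop:coc-emk-complete} then relies on. It also avoids the general creation result, which the paper states loosely as creation of ``all 2-categorical limits''. What is actually needed there is creation of flexible limits such as inserters, equifiers and lax limits.

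The paper's version is shorter, and it is the template reused in \cref{thm:rig-em-complete} for $\Rig$ over $\Pres$. There your direct approach would also need the symmetric monoidal structure on $\rigR^P$ to be constructed by hand.

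Both approaches depend in the same way on size conventions ensuring that small presheaves on an object of $\CAT$ again form an object of $\CAT$. You rightly flag this and defer it to the literature.
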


\begin{proof} As shown in~\cite{AnelM:sym2rcc}, the pseudofunctor is pseudomonadic. 
Therefore, it creates all 2-categorical limits~\cite{BlackwellR:twodmt,CreurerI:bectpm}.
In particular, it creates  Eilenberg--Moore objects since 
 Eilenberg--Moore objects are a form of lax limit~\cite{StreetR:fortm}.
\end{proof}

If $\cocC$ be a presentable category, a monad $P \co \cocC \to \cocC$ is said to be \emph{cocontinuous} if its underlying functor is cocontinuous.
Given a cocontinuous monad $P$ on a cocomplete category $\cocC$, we write
$\cocC^P$ for the category of Eilenberg--Moore algebras for $P$, which provides the Eilenberg--Moore object for $P$ in $\CAT$. 
Applying \cref{lem:monadicity-coc}, we get that this is also an Eilenberg--Moore object for $P$ in $\COC$. 
Moreover, when $\cocC$ is presentable, so is $\cocC^P$ \cite[2.78]{Adamek_Rosicky_1994}, thus leading
to the following proposition.

\begin{prop}
\label{prop:coc-emk-complete}
The bicategory $\Pres$ is Eilenberg--Moore complete and the forgetful functor $\Pres\to \CAT$ creates Eilenberg--Moore objects.
Moreover, being a tame bicategory, $\Pres$ is Eilenberg--Moore--Kleisli complete. \qed
\end{prop}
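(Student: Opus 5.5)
The proposition has three claims, and I will prove them in order.

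\emph{Existence and creation.} Let $P$ be a cocontinuous monad on a presentable category $\cocC$, that is, a monad in $\Pres$. Its image under the forgetful functor is a monad in $\CAT$. There the Eilenberg--Moore object is the category $\cocC^P$ of algebras, with forgetful functor $U \co \cocC^P \to \cocC$. By \cref{lem:monadicity-coc}, the forgetful pseudofunctor $\COC\to\CAT$ creates Eilenberg--Moore objects. Concretely, this means:
\begin{itemize}
\item $\cocC^P$ is cocomplete, since colimits are computed by lifting colimits from $\cocC$ (this works because $P$ preserves them);
\item $U$ is cocontinuous;
\item $U \co \cocC^P\to\cocC$ is an Eilenberg--Moore object in $\COC$.
\end{itemize}
By \cite[2.78]{Adamek_Rosicky_1994}, $\cocC^P$ is again presentable, because $P$ preserves filtered colimits and hence is accessible. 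So $\cocC^P$ lies in the full sub-bicategory $\Pres$.

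The universal property restricts to $\Pres$: for $X$ in $\Pres$, the equivalence $\hom\COC X{\cocC^P}\simeq\LMod\COC X\cocC P$ is between hom-categories that coincide with those of $\Pres$, since $\Pres$ is full in $\COC$. Hence $\Pres$ has Eilenberg--Moore objects. Creation by $\Pres\to\CAT$ follows because this functor factors as $\Pres\hookrightarrow\COC\to\CAT$: the second functor creates Eilenberg--Moore objects, and the first is full and closed under them.

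\emph{Kleisli objects.} By \cref{lem:tame-coc}, $\Pres$ is tame. So \cref{lem:em=k}(ii) applies: the left adjoint $P^P\co\cocC\to\cocC^P$ of $U$, namely the free-algebra functor, is a Kleisli object. This left adjoint lives in $\Pres$ because left adjoints are cocontinuous. Every monad in $\Pres$ therefore has both objects, and $\Pres$ is Eilenberg--Moore--Kleisli complete.

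\emph{Main obstacle.} The only non-formal input is the presentability of $\cocC^P$. Here I would check that a cocontinuous $P$ is in particular $\mu$-accessible for some $\mu$, so that the cited result of Ad\'amek--Rosick\'y applies.
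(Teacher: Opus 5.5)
Your proposal is correct and follows essentially the same route as the paper. \cref{lem:monadicity-coc} makes $\cocC^P$ the Eilenberg--Moore object in $\COC$, \cite[2.78]{Adamek_Rosicky_1994} places it in the full sub-bicategory $\Pres$, and tameness (\cref{lem:tame-coc}) together with \cref{lem:em=k} supplies the Kleisli objects; your ``main obstacle'' is immediate, since a cocontinuous $P$ preserves all filtered colimits and is therefore finitary, hence accessible.
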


\begin{rmk} 
The existence of Eilenberg--Moore objects in $\Pres$ and the fact that they are calculated as in $\CAT$ can also be deduced from the results in
~\cite{BirdG:lim2cl} and~\cite{BirdG:flel2c}. First, \cite[Theorem~3.15]{BirdG:lim2cl} shows that $\Pres$ admits all weighted limits of retract type, which include inserters and equifiers, and that these are preserved by $U \co \Pres \to \CAT$.
Secondly, \cite[Proposition~1.1]{BirdG:flel2c} shows that inserters and equifiers allow us to construct Eilenberg--Moore objects.
\end{rmk}

Let $\catA$ be a small category and $P \co \pshA \to \pshA$ be a cocontinuous monad.
Let $\pshA^P$ be the category of $P$-algebras and 
\begin{equation}
\label{eq:em-adjunction-in-coc}
\begin{tikzcd}
\pshA \ar[r, shift left = 2, "F"] \ar[r, phantom, description, "\scriptstyle{\vdash}" rotate=90] & \pshA^P \ar[l, shift left = 2, "U"] 
\end{tikzcd}
\end{equation}
the associated adjunction in $\Pres$, so that the functor $U$ is monadic (and cocontinuous).
We define the category $\catA_P$ to be the image of the functor 
\[
\begin{tikzcd}
\catA \ar[r, "\embA"] & \pshA \ar[r, "F"] & \pshA^P \,. 
\end{tikzcd}
\] 
This is the full subcategory of $\pshA^P$ spanned by free algebras on the objects of $A$, thus
fitting into an image factorisation
\[
\begin{tikzcd}
\catA \ar[d, "\embA"'] \ar[r, two heads]  & \catA_P \ar[d, hook] \\
\pshA \ar[r, "F"'] & \pshA^P  \,.
\end{tikzcd}
\]
The category $\catA_P$ has the following explicit description.
Recall that a cocontinuous functor $P \co \pshA \to \pshA$ is equivalent to a ``coefficients matrix''  $\matrice P--  \co  \catA\op \times \catA \to \Set$. 
By construction, the objects of $\catA_P$ are those of $\catA$ and for $x$ and $y$ in $\catA$, we have the following description of the hom sets of $\catA_P$ in terms of the matrix $\matrice P--$:
\begin{align*}
\hom{\catA_P} a{a'}
&= \hom{\pshA^P}{P(a)}{P(a')}\\
&= \hom\pshA a{P(a')}\\
&= \hom\pshA a{\matrice P-{a'}}\\
&= \matrice Pa{a'} \,.
\end{align*}
The composition of $\catA_P$ is induced by the monad structure of $P$.

The left Kan extension of the inclusion $\catA_P \to \pshA^P$ along the inclusion $\catA_P\to \psh{\catA_P}$, defines a cocontinuous functor $K$ as follows
\begin{equation}
\label{eq:from-pshaA-to-pshAa}
\begin{tikzcd}
\catA_P \ar[r, hook] \ar[d, hook]
& \pshA^P
\\
\psh{\catA_P} \ar[ru, "K"']
\,.
\end{tikzcd}
\end{equation} 
It is a classical fact that $K$ is an equivalence, but we prove it for reference purposes in \cref{lem:monad-presheaves}.

\begin{lem}
\label{lem:surj-small=monadic}
If $f \co A\to B$ is an essentially surjective functor between small categories, then the adjunction $f_! \co \pshA \rightleftarrows \pshB \co f^*$ is monadic in $\COC$.
In particular, the functor $P=f^*f_!$ is a cocontinuous monad and $\pshB=\pshA^P$.
\end{lem}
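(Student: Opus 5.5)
We apply the Beck monadicity theorem in $\CAT$ to the right adjoint $f^* \co \pshB \to \pshA$, and then transfer the conclusion to $\COC$ using \cref{lem:monadicity-coc}.

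\textbf{The adjunction lives in $\COC$.} The functor $f^*$ is given by precomposition with $f$. Colimits of presheaves are computed pointwise, so $f^*$ preserves all small colimits. It also has a right adjoint $f_*$, given by right Kan extension, which exists because the categories are small. The functor $f_!$ is a left adjoint, so it is cocontinuous. Hence $f_! \dashv f^*$ is an adjunction in $\COC$. The composite $P = f^*f_!$ is then a cocontinuous monad on $\pshA$.

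\textbf{Beck's conditions.} We verify the hypotheses of the monadicity theorem for $f^*$.
\begin{itemize}
\item \emph{Preservation of coequalisers.} The functor $f^*$ preserves all colimits, in particular coequalisers of $f^*$-split pairs, and $\pshB$ has all coequalisers.
\item \emph{Conservativity.} Let $\alpha \co X \Rightarrow Y$ be a map in $\pshB$ such that $f^*\alpha$ is invertible. Then $\alpha_{f(a)}$ is a bijection for every $a$ in $\catA$. Since $f$ is essentially surjective, every $b$ in $\catB$ is isomorphic to some $f(a)$. By naturality of $\alpha$ with respect to the isomorphism $b \cong f(a)$, the component $\alpha_b$ is conjugate to $\alpha_{f(a)}$, hence invertible. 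So $\alpha$ is an isomorphism.
\end{itemize}
Beck's theorem then shows that the comparison functor $\pshB \to \pshA^P$ is an equivalence, so $\pshB = \pshA^P$ in $\CAT$.

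\textbf{Transfer to $\COC$.} By \cref{lem:monadicity-coc}, the forgetful pseudofunctor $\COC \to \CAT$ creates Eilenberg--Moore objects. Therefore the Eilenberg--Moore object of $P$ in $\COC$ is $\pshA^P$ with its monadic forgetful functor. Since $\pshB \simeq \pshA^P$ compatibly with the forgetful functors to $\pshA$, the object $\pshB$ together with $f^*$ is an Eilenberg--Moore object for $P$ in $\COC$. This is exactly the statement that the adjunction is monadic in $\COC$.

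\textbf{Main obstacle.} Conservativity is the only step that uses essential surjectivity, and it is immediate. The step requiring some care is the last one: identifying monadicity in $\COC$ with monadicity in $\CAT$. The earlier lemma handles this, since cocontinuity of $f^*$ ensures that the comparison map is a morphism of $\COC$.
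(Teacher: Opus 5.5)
Your proposal is correct and follows the paper's argument: $f^*$ is conservative because $f$ is essentially surjective, and it is cocontinuous with left adjoint $f_!$, so the monadicity theorem applies. Your explicit transfer from $\CAT$ to $\COC$ via \cref{lem:monadicity-coc} only spells out a step that the paper leaves implicit.
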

\begin{proof}
The functor $f^*$ is conservative since $f$ is surjective.
Since $f^*$ has a left adjoint and is cocontinuous, the result follows from the monadicity theorem.
\end{proof}

\begin{lem}
\label{lem:monad-presheaves}
Let $\catA$ be a small category and $P \co \pshA \to \pshA$ a cocontinuous monad.
The functor $K \co \psh{\catA_P} \to \pshA^P$ of~\eqref{eq:from-pshaA-to-pshAa} is an equivalence.
\end{lem}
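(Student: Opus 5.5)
The plan is to prove that $K$ is fully faithful and that its essential image is all of $\pshA^P$, using the standard recognition principle for presheaf categories. Write $\iota \co \catA_P \to \pshA^P$ for the inclusion, so $K = \mathrm{Lan}_{\emb}\iota$. Concretely, $K(X) = \colim_{(a,x)\in \int X} F(a)$, and $K$ has a right adjoint $N \co \pshA^P \to \psh{\catA_P}$ given by $N(M)(a) = \hom{\pshA^P}{F(a)}{M} \cong \hom\pshA{a}{U M} = (UM)(a)$. So $N$ is essentially ``$U$ remembering the action of the Kleisli morphisms''. I will show that the unit and counit of $K \dashv N$ are invertible.

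First, the unit. Both $NK$ and the identity are cocontinuous endofunctors of $\psh{\catA_P}$ that agree on representables, since $NK(\emb a) = N(F a) = \hom{\pshA^P}{F(-)}{F(a)} = \hom{\catA_P}{-}{a}$ by full faithfulness of $\catA_P \hookrightarrow \pshA^P$. Hence it suffices to check that $N$ is cocontinuous. Because $N(M)(a) \cong U(M)(a)$ naturally, and colimits in $\psh{\catA_P}$ are computed pointwise, $N$ preserves a colimit as soon as $U$ does and evaluation at $a$ does. Here $U$ is cocontinuous: it is a morphism in $\Pres$ in \eqref{eq:em-adjunction-in-coc}, or, via \cref{lem:monadicity-coc}, $U$ creates colimits because $P$ preserves them. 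So $N$ is cocontinuous and the unit $\id \Rightarrow NK$ is an isomorphism on representables, hence everywhere. This shows that $K$ is fully faithful.

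Second, the counit. For the counit $KN \Rightarrow \id$, I will show that $N$ is conservative. Then the triangle identity $N\varepsilon \cdot \eta N = \id$, together with $\eta$ invertible, forces $N\varepsilon$ to be invertible, so $\varepsilon$ is invertible. Conservativity follows because a map $g$ of $P$-algebras with $N(g)$ invertible has $U(g)$ invertible componentwise at every $a \in \catA$, hence $U(g)$ is invertible in $\pshA$. Since $U$ is monadic, it is conservative, so $g$ is invertible.

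The main obstacle is the cocontinuity of $N$, i.e.\ identifying $N$ with $U$ followed by a pointwise evaluation and making sure the Kleisli-morphism functoriality matches. This is where cocontinuity of $P$ is essential, and I would verify carefully the naturality of $N(M)(a)\cong (UM)(a)$ in $a \in \catA_P$ rather than just in $a\in\catA$. Alternatively, one could note that the objects $F(a)$ form a dense generating family of small-projective objects of $\pshA^P$, since $\hom{}{F a}{-} = \mathrm{ev}_a \circ U$ is cocontinuous and jointly conservative, and then invoke the standard characterisation of presheaf categories.
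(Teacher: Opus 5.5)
Your argument is correct, but it follows a different route from the paper. The paper does not analyse $K$ directly. It takes the bijective-on-objects functor $i \co \catA \to \catA_P$ and invokes \cref{lem:surj-small=monadic}, which is Beck's monadicity theorem applied to the conservative cocontinuous functor $i^*$, to get $\psh{\catA_P} \simeq \pshA^{i^*i_!}$. It then tacitly identifies $i^*i_!$ with $P$ as monads: both are cocontinuous, $i^*i_!(a) = \hom{\catA_P}{i(-)}{a} = \matrice P-a$, and the multiplications agree because composition in $\catA_P$ is induced by that of $P$. Finally it recognises the resulting equivalence as $K$, because it is cocontinuous and restricts to the inclusion $\catA_P \hookrightarrow \pshA^P$.

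You instead use the nerve--realisation adjunction $K \dashv N$ and check the unit and counit by hand. This uses only that $U$ is cocontinuous and conservative and that $\catA_P$ is a full subcategory. It amounts to recognising $\pshA^P$ as presheaves on the dense family of small-projective objects $F(a)$, the alternative you mention yourself.

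Your route is self-contained and avoids both Beck's theorem and the paper's two tacit identifications ($i^*i_! \cong P$ as monads, and the induced equivalence with $K$). The paper's route is shorter given \cref{lem:surj-small=monadic}, which it reuses later (e.g.\ in \cref{lem:adj-relative-free-alg}). It also matches the paper's habit of identifying Eilenberg--Moore objects through monadic adjunctions.

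The step you flag as the main obstacle is not one. Colimits and isomorphisms in $\psh{\catA_P}$ are detected objectwise. So an isomorphism $N(M)(a) \cong (UM)(a)$ that is natural in $M$ for each fixed $a$ already gives both the cocontinuity and the conservativity of $N$. Naturality in $a \in \catA_P$ is never needed.
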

\begin{proof} 
The canonical functor $i \co \catA\to \catA_P$ induces an adjunction in $\Pres$
\[
\begin{tikzcd}
\pshA
\ar[r, shift left = 2, "i_!"]
\ar[r, phantom, description, "\scriptstyle{\vdash}" rotate=90]
\ar[from=r, shift left = 2, "i^*"] 
& \psh{\catA_P}
\,.
\end{tikzcd}
\]
Since $i \co \catA\to \catA_P$ is surjective by definition of $\catA_P$, this adjunction is monadic by \cref{lem:surj-small=monadic}.
This provides a canonical equivalence $K'\co\psh{\catA_P} \to \pshA^P$ which commutes 
with the free-algebra functor $F$ and $i_!$.
Therefore, $K'$ must be the left Kan extension of the canonical inclusion $\catA_P\to \pshA^P$, that is $K'=K$.
\end{proof}

\begin{rmk}
\label{rmk:naturality-cc-mnd}
Define a morphism of cocontinuous monads $(A,P)\to (B,Q)$ as a pair $(f,\phi)$ where $f \co A\to B$ is a functor and $\phi$ a morphism of monads $\phi \co P\to f^*Qf_!$ in $\pshA$
(or equivalently a natural transformation $\phi' \co f_!\circ P\to Q\circ f_!$ satisfying suitable conditions).
With the obvious notion of 2-cell, this defines a bicategory $\mathcal{M}$ of cocontinuous monads and one can show that the equivalence of \cref{lem:monad-presheaves} is functorial on $\mathcal{M}$.
\end{rmk}

For a small category $\catA$, we shall say that the cocomplete category $\Psh\catA$ is \emph{free on $\catA$}.
We denote by $\FsetCOC\subseteq\FCOC\subseteq\COC$ the full sub-bicategories of $\COC$ spanned by cocomplete categories that are free on a small set and free on a small category, respectively.
Since $\COC$ is tame, these are also tame bicategories.

\begin{prop}
\label{prop:free-coc}
The Eilenberg--Moore--Kleisli completion of $\FsetCOC$ is $\FCOC$.
\end{prop}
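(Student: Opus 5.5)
The plan is to apply \cref{prop:EMC-embedding2} to the full inclusion $F\co \FsetCOC\hookrightarrow \Pres$. Here $\FsetCOC$ consists of categories $\psh{X}$ with $X$ a small set (discrete category), and these are presentable, so $F$ is a full and faithful pseudofunctor between tame bicategories; it is tame by \cref{lem:tame-coc} and \cref{rmk:sub-tame=tame}. By \cref{prop:coc-emk-complete}, $\Pres$ is Eilenberg--Moore--Kleisli complete. \Cref{prop:EMC-embedding2} then identifies $\EMK(\FsetCOC)$ with the full sub-bicategory of $\Pres$ spanned by the Eilenberg--Moore objects $\psh{X}^P$ of cocontinuous monads $P$ on $\psh{X}$, for $X$ a small set. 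So the statement reduces to showing that this class of objects coincides, up to equivalence, with the class of presheaf categories $\pshA$ on small categories $\catA$. Both are full sub-bicategories of $\Pres$ (equivalently of $\COC$), so agreement on objects gives the biequivalence.

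The first inclusion comes from \cref{lem:monad-presheaves}: given a small set $X$ (viewed as a discrete category) and a cocontinuous monad $P$ on $\psh X$, we have $\psh X^P\simeq \psh{X_P}$. Since $X_P$ is small (its objects are those of $X$ and its hom-sets are $\matrice P x{x'}$, which are small), every Eilenberg--Moore object of this kind is free on a small category.

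The reverse inclusion starts from a small category $\catA$. Let $X=\mathrm{ob}(\catA)$, viewed as a discrete category, and let $i\co X\to \catA$ be the inclusion of objects. This functor is bijective on objects, so in particular essentially surjective. By \cref{lem:surj-small=monadic}, the adjunction $i_!\dashv i^*$ is monadic in $\COC$ and $\pshA\simeq \psh X^{P}$ with $P=i^*i_!$ a cocontinuous monad on $\psh X$. Hence $\pshA$ is the Eilenberg--Moore object of a monad in $\FsetCOC$.

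The only real obstacle is bookkeeping. \Cref{prop:EMC-embedding2} is stated with $\ccatD$ Eilenberg--Moore--Kleisli complete, so we must work in $\Pres$ rather than $\COC$; this is harmless because $\FCOC\subseteq\Pres$ and full sub-bicategories can be compared inside either. We must also make sure that ``free on a small set'' is read as presheaves on a discrete small category, so that \cref{lem:monad-presheaves,lem:surj-small=monadic} apply verbatim. Finally, the Eilenberg--Moore objects computed in $\Pres$ agree with those in $\CAT$ and $\COC$ by \cref{lem:monadicity-coc} and \cref{prop:coc-emk-complete}, so the identifications above are legitimate.
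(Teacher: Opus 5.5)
Your proof is correct and follows essentially the paper's route. One inclusion is \cref{lem:monad-presheaves}. For the other, the paper takes the hom-matrix monad of $\catC$ on presheaves over its set of objects, which is exactly your monad $i^*i_!$, so invoking \cref{lem:surj-small=monadic} directly (as the paper itself effectively does in the proof of \cref{lem:opdrig=em-frig}) is the same argument. Your explicit appeal to \cref{prop:EMC-embedding2} inside $\Pres$ simply makes precise the identification of $\EMK(\FsetCOC)$ with a full sub-bicategory, which the paper leaves implicit.
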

\begin{proof}
By \cref{lem:monad-presheaves}, $\FCOC$ is closed under Eilenberg--Moore--Kleisli objects and this proves the inclusion $\FsetCOC\subseteq\EMK(\FCOC)$.
Conversely, if $C$ is a small category with set of objects $\catA$, then the matrix $\catA\times \catA\to \Set$ of hom sets of $\catC$ defines a continuous monad in $\pshA$.
By \cref{lem:monad-presheaves}, the Eilenberg--Moore--Kleisli object of this monad is $\psh{\catA_P} \cong \pshC$ and this shows that $\FCOC\subseteq\EMK(\FsetCOC)$.
\end{proof}

\subsection{Symmetric 2-rigs} 
\label{sec:2-rigs}

Before we define 2-rigs, we need to recall some basic material on symmetric monoidal categories.
Let $\FinBij$ be the groupoid of finite sets and bijections.
We put $\set 0 = \emptyset$ and $\set n = \{1,\dots,n\}$, for $n\geq1$.
We write~$\mathfrak S_n\subseteq\FinBij$ the full subcategory spanned by the object $\underline n$, \ie the
$n$-th symmetric group, and~$\mathfrak S \subseteq\FinBij$ the full subcategory spanned by all the $\set n$, for $n\geq0$.
The inclusion $\mathfrak S\subseteq \FinBij$ is an equivalence of categories and $\mathfrak S$ is a skeleton for $\FinBij$.

We denote by $\SMCat$ the bicategory of small symmetric monoidal categories, symmetric strong monoidal functors, and monoidal transformations.
This bicategory comes with a biadjunction 
\begin{equation}
\label{eq:adj:cat-smcat}
\begin{tikzcd}
\Freesmc  \co  \Cat 
\ar[r, shift left = 2]
\ar[r, phantom, description, "\scriptstyle{\vdash}" rotate=90]
\ar[from=r, shift left = 2]
&\SMCat  \co  \sfU
\end{tikzcd}
\end{equation}
where $\sfU$  is the forgetful pseudofunctor and its left biadjoint $\Freesmc$ is constructed by the bicolimit formula in $\Cat$
\[
\freesmcC
\ \defeq\ 
\bicolim_{n\in \mathfrak S}C^n
\ \simeq\ 
\bicolim_{N\in \FinBij}C^N
\,,
\]
equipped with the obvious product.
Explicitly, an object of $\freesmcC$ is a family $\symelt cn$ of objects in $C$.
A morphism $\symelt cn\to \symelt {c'}m$
is a pair $(f,\phi)$ where $f \co \set n\to \set m$ is a bijection 
and $\phi_i  \co  c_i\to c'_{f(i)}$ is a family of arrows in $C$ for $i=1,\dots,n$.
In particular, we have $\freesmc 1 = \mathfrak S \simeq \FinBij$.
The biadjunction is pseudomonadic~\cite{BlackwellR:twodmt}.

\medskip
We now turn to the definition and study of 2-rigs.
\begin{defn} \leavevmode
\begin{itemize}
\item A \emph{(symmetric) 2-rig} is a presentable category equipped with a symmetric monoidal structure such that the tensor product is a cocontinuous functor in each variable.
\item A \emph{2-rig morphism} is a symmetric strong monoidal cocontinuous functor.
\item A \emph{2-rig transformation} is a symmetric monoidal transformation. 
\end{itemize}
\end{defn}

All 2-rigs considered in this paper will be symmetric, \ie~their underlying monoidal structure is symmetric.
Thus, we simply speak of 2-rigs rather than of symmetric 2-rigs. 
We define $\Rig$ to be the bicategory of 2-rigs, 2-rig morphisms and 2-rig transformations.

\begin{ex}\leavevmode
\label{ex:rig}
\begin{enumerate}[label=(\alph*)]

\item\label{ex:rig:convolution} If $\moncatM$ is a small symmetric monoidal category, then $\pshM$ is a 2-rig for the Day convolution product. Such 2-rigs will be called \emph{convolution 2-rigs}. We denote by $\CRig\subseteq\Rig$ the full sub-bicategory of convolution 2-rigs. By construction, it is the image of a pseudo-functor $\Psh \co \SMCat\to \Rig$.

\item\label{ex:rig:free} In particular, when $\moncatM=\freesmcA$ is a free symmetric monoidal category on a small category $\catA$, every category $\freerigA$ is a 2-rig. Such 2-rigs will be called \emph{free}. 
The construction of the free 2-rig $\freerigA$ defines a partial left pseudoadjoint to the forgetful pseudofunctor $\Rig \to \CAT$ along the inclusion $\Cat\subseteq \CAT$.
We denote by $\FRig\subseteq\Rig$ the full sub-bicategory of free 2-rigs, and by $\FsetRig\subseteq\FgpdRig\subseteq\FRig$ the full sub-bicategories of free 2-rigs generated by a set and by a groupoid.

\end{enumerate}	
\end{ex}
By definition, the categories $\FRig$ and $\CRig$ fit into image factorisations as follows:
\begin{equation}
\label{eq:cat-smcat-2rig}
\begin{tikzcd}
\Set
	\ar[d,two heads, "\Psh \circ \Freesmc"']
	\ar[r, hook]
&\Cat
	\ar[d,two heads, "\Psh \circ \Freesmc"']
	\ar[r, "\Freesmc", hook]
&\SMCat
	\ar[d,two heads, "\Psh"]
\\
\FsetRig 
	\ar[r,hook]
&\FRig 
	\ar[r,hook]
&\CRig
	\ar[r,hook]
&\Rig
\,. & 
\end{tikzcd}
\end{equation}

We now recall a more explicit description of the categories $\FRig$ and $\FsetRig$ from \cite{GambinoN:opebaf}.
For two small categories $\catA$ and $\catB$, the category of \emph{symmetric sequences} from $\catA$ to $\catB$ is defined to be
\[
\CatSym(A,B) \defeq \fun{\freesmcB\op\times\catA}\Set\,.
\]
We then have a  chain of equivalences
\begin{align*}
\fun{\freesmcB\op\times\catA}\Set 
&\simeq \Fun{\catA}{\fun{\freesmcB\op}\Set}\\
&\simeq \Fun{\catA}\freerigB\\
&\simeq \hom\Rig\freerigA\freerigB\,.
\end{align*}
Two symmetric sequences $F \co \freesmcB\op\times\catA\to\Set$ and 
$G \co \freesmcC\op\times\catB\to \Set$ can be composed.
By first extending $G$ to a functor $G^\times \co \freesmcC\op\times\freesmcB\to\Set$ which is symmetric strong monoidal in the second variable, and then taking the coend of $G^\times$ and $F$ over $\freesmcB$ 
\begin{equation}
\label{eq:composition-sym-seq}
G\bimcomp {}  F
\ \defeq\ 
G^\times\coend{\freesmcB} F
\ =\ 
\int^{\vec y \in \freesmcB} \Smatrice Fy- \times G^{\times \vec y} \,,
\end{equation}
where $G^{\times \vec y}= G(y_1) \times \ldots \times G(y_m)$, for ${\vec y} = \angles{y_1, \ldots, y_m}$.
This composition defines a functor 
\[
(-) \bimcomp {}  (-)  \co  \fun{\freesmcB\op\times\catA}\Set
\times 
\fun{\freesmcC\op\times\catB}\Set
\stto 
\fun{\freesmcC\op\times\catA}\Set
\]
and a straightforward computation shows that it corresponds to the composition of 2-rig morphisms under the equivalences $\fun{\freesmcB\op\times\catA}\Set \simeq \hom\Rig\freerigA\freerigB$.
In this way, it is possible to show that categories and symmetric sequences form a bicategory $\CatSym$ which is equivalent to $\FRig$~\cite[Theorem~2.4.4 and Theorem~3.2.2]{GambinoN:opebaf}.\footnote{In~\cite[page~45]{GambinoN:opebaf}, $\CatSym$ is defined as the opposite of
 the bicategory $S\text{-}\mathsf{Dist}$.}
We record this result for future reference.

\begin{prop}
\label{prop:freerig-biequivalence}
The following bicategories are biequivalent:
\begin{enumerate}
\item the bicategory $\FRig$ of free 2-rigs,
\item the bicategory $\CatSym$ of categorical symmetric sequences.
\end{enumerate}
Moreover, this equivalence restricts to an equivalence $\FsetRig\simeq\SetSym$ where $\SetSym\subseteq\CatSym$ is the full sub-bicategory spanned by sets.
\end{prop}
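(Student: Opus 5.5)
The plan is to build a pseudofunctor $\CatSym \to \FRig$ that is the identity on objects up to $\catA \mapsto \freerigA$, and then show it is essentially surjective and fully faithful; by the criterion recalled in \cref{sec:background}, that makes it a biequivalence.

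\emph{Objects and essential surjectivity.} A small category $\catA$ is sent to $\freerigA$. By the definition of $\FRig$ as the image of $\Psh\circ\Freesmc$ in \eqref{eq:cat-smcat-2rig}, every object of $\FRig$ is equivalent to one of these, so essential surjectivity comes for free.

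\emph{Hom-categories.} We use the displayed chain of equivalences
\[
\fun{\freesmcB\op\times\catA}\Set \simeq \Fun{\catA}{\freerigB} \simeq \hom\Rig\freerigA\freerigB .
\]
The first step is currying. The second step is the universal property of $\freerigA$ as the free 2-rig on $\catA$ (\cref{ex:rig}\ref{ex:rig:free}): restriction along $\catA \to \freesmcA \to \freerigA$ gives an equivalence between 2-rig morphisms out of $\freerigA$ and functors out of $\catA$. Concretely, a symmetric sequence $F$ is sent to the 2-rig morphism $\hat F$ obtained by left Kan extending $F$ first along $\catA\to\freesmcA$, as the unique symmetric strong monoidal extension $F^\otimes$, and then cocontinuously along Yoneda. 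We take this assignment as the action of the pseudofunctor on homs, so full faithfulness holds by construction.

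\emph{Composition.} The remaining work is to check compatibility with composition, i.e.\ to exhibit coherent isomorphisms $\hat G\circ \hat F \cong \widehat{G\bimcomp{}F}$. Both sides are 2-rig morphisms $\freerigA\to\freerigC$, so by the universal property it suffices to compare them on $a\in\catA$. We compute $\hat F(a)=\int^{\vec y}\Smatrice Fy a\cdot \vec y$. Applying $\hat G$, which is cocontinuous and strong monoidal, gives
\[
\int^{\vec y}\Smatrice Fy a\times G^{\times\vec y}
\]
(with Day convolution products, read as presheaves on $\freesmcC$). This is exactly formula \eqref{eq:composition-sym-seq}. Unit compatibility is similar: the identity symmetric sequence $\hom{\freesmcA}{-}{a}$ corresponds to $\id$.

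Coherence (associativity and unit modifications) should follow formally. Because the isomorphisms come from a universal property, the coherence diagrams in $\CatSym$ are transported from the strict ones in $\Rig$. Indeed, one can simply \emph{define} the bicategory structure of $\CatSym$ by transport along these equivalences, and then check that it agrees with the coend formula.

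\emph{Restriction to sets.} For the final claim, note that the construction restricts along full sub-bicategories: $\SetSym$ maps onto $\FsetRig$, which is by definition the image on sets.

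\emph{Main obstacle.} I expect the hardest step to be the identification of $\hat G\circ\hat F$ with the coend formula. Specifically, one must verify that the Day-convolution tensor power $\hat G(y_1)\otimes\dots\otimes\hat G(y_m)$ matches the extension $G^\times$ used in the paper, including the symmetric-group actions, and that the coends over $\freesmcB$ commute appropriately. The rest is formal.
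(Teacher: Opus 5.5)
Your proposal is correct and takes essentially the same route as the paper: the paper uses the same chain of equivalences $\fun{\freesmcB\op\times\catA}\Set \simeq \hom\Rig\freerigA\freerigB$ and asserts that the coend composition \eqref{eq:composition-sym-seq} matches composition of 2-rig morphisms by a ``straightforward computation''. It otherwise defers to \cite[Theorems~2.4.4 and~3.2.2]{GambinoN:opebaf}, and your computation of $\hat G\circ\hat F$ on generators just fills in that computation. One small wording point: the symmetric strong monoidal extension $F^\otimes$ along $\catA\to\freesmcA$ comes from the universal property of the free symmetric monoidal category, not from an ordinary left Kan extension, which would be initial on sequences of length $\neq 1$.
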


Recall from~\cite{BirdG:lim2cl} that the 2-category  $\Pres$ admits a symmetric monoidal closed structure, whose tensor product classifies cocontinuous functors in two
variables. We write $C \otimes D$ for this tensor product, whose unit is the category $\Set$. 
A 2-rig $\rigR$ can be defined equivalently as a symmetric pseudomonoid in $(\Pres,\otimes,\Set)$ and the forgetful pseudofunctor
$U \co \Rig \to \Pres$ 
sending a 2-rig to its underlying small cocomplete category can be regarded as an instance of the forgetful pseudofunctor mapping a pseudomonoid to its underlying object.
As discussed in~\cite{AnelM:sym2rcc}, this forgetful pseudofunctor has a left biadjoint, which we write $\Sym \co \Pres \to \Rig$ and refer as the \myemph{symmetric algebra} pseudofunctor, and we have the following result. 

\begin{prop} 
\label{thm:sym-alg-coc}
There exists a pseudomonadic biadjunction
\[
\begin{tikzcd}
\Pres \ar[r, shift left = 2, "\Sym"] \ar[r, phantom, description, "\scriptstyle{\vdash}" rotate=90] 
& \Rig  \,. \ar[l, shift left = 2, "\mathsf{U}"] 
\end{tikzcd}
\]
\end{prop}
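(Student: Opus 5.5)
We need to construct the left biadjoint $\Sym$ and prove the biadjunction is pseudomonadic. The plan follows the route through free symmetric pseudomonoids in the monoidal bicategory $(\Pres,\otimes,\Set)$.

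\emph{Construction of $\Sym$.} For a presentable category $\cocC$, set
\[
\sym\cocC \defeq \bicolim_{n\in\mathfrak S}\cocC^{\otimes n}\simeq\coprod_{n\ge0}(\cocC^{\otimes n})_{\mathfrak S_n},
\]
where $(-)_{\mathfrak S_n}$ is the bicolimit in $\Pres$ of the action of $\mathfrak S_n$ on $\cocC^{\otimes n}$ permuting factors. Such bicolimits exist in $\Pres$, since $\Pres$ has all small bicolimits, computed as bilimits in $\CAT$ of right adjoints. The multiplication
\[
\cocC^{\otimes n}\otimes\cocC^{\otimes m}\simeq\cocC^{\otimes(n+m)}
\]
descends along $\mathfrak S_n\times\mathfrak S_m\to\mathfrak S_{n+m}$. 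It is cocontinuous in each variable because $\otimes$ in $\Pres$ preserves bicolimits in each variable, the structure being closed. This makes $\sym\cocC$ a symmetric pseudomonoid, that is, a 2-rig.

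\emph{Universal property.} For a 2-rig $\rigR$, cocontinuous functors $\cocC^{\otimes n}\to \rigR$ correspond to functors cocontinuous in each of $n$ variables. A symmetric strong monoidal cocontinuous functor $\sym\cocC\to\rigR$ is determined by its restriction to $\cocC=\cocC^{\otimes 1}$, with the component in degree $n$ forced to be $x_1\otimes\dots\otimes x_n\mapsto Fx_1\otimes\dots\otimes Fx_n$. Writing this out gives the equivalence
\[
\hom\Rig{\sym\cocC}\rigR\simeq\hom\Pres\cocC{\sfU\rigR}.
\]
This is the standard free symmetric pseudomonoid argument, and it needs only that $\otimes$ distributes over the relevant bicolimits, which is where closedness is used. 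As a sanity check, on $\cocC=\pshA$ it recovers $\freerigA$, in line with \cref{ex:rig}.

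\emph{Pseudomonadicity.} I would use the bicategorical Beck theorem (\cite{CreurerI:bectpm}, or the Blackwell--Kelly--Power framework \cite{BlackwellR:twodmt}). It suffices to show two things.
\begin{enumerate}
\item $\sfU$ reflects equivalences. This is clear: a 2-rig morphism whose underlying functor is an equivalence has a pseudo-inverse that inherits a symmetric strong monoidal structure.
\item $\Rig$ has, and $\sfU$ preserves, bicodescent objects of $\sfU$-split pseudo-simplicial diagrams.
\end{enumerate}
Alternatively, and more simply, I would argue via bilimits. Bilimits in $\Pres$ are computed in $\CAT$, and $\otimes$ commutes with bicolimits. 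The 2-monad $\sfU\Sym$ on $\Pres$ preserves sifted bicolimits, since $\cocC\mapsto\cocC^{\otimes n}$ preserves them (the tensor being cocontinuous in each variable and the diagonal of a sifted shape being final), and taking $\mathfrak S_n$-coinvariants commutes with colimits. Hence the comparison from $\Rig$ to pseudoalgebras of $\sfU\Sym$ is an equivalence, because a pseudoalgebra structure $\sym\cocC\to\cocC$ is exactly a coherent family of $n$-ary cocontinuous tensors, that is, a symmetric pseudomonoid.

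\emph{Main obstacle.} I expect the hardest step to be the coherence identification of $\sfU\Sym$-pseudoalgebras with symmetric pseudomonoids in $\Pres$. I would reduce it to the known $\CAT$-level statement that the free symmetric monoidal category 2-monad has symmetric monoidal categories as pseudoalgebras \cite{BlackwellR:twodmt}, transported along the tensor of $\Pres$. Alternatively, I would cite \cite{AnelM:sym2rcc}, where this biadjunction is established.
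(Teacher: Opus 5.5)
\textbf{Gap: pseudomonadicity is not established.} The paper gives no argument for this proposition. It only notes that 2-rigs are the symmetric pseudomonoids in $(\Pres,\otimes,\Set)$ and cites \cite{AnelM:sym2rcc}, so your fallback is literally the paper's route. Your construction of $\Sym$ as the free symmetric pseudomonoid $\coprod_n(\cocC^{\otimes n})_{\mathfrak S_n}$, and the resulting biadjunction, are the standard argument and are fine. They also make explicit the identification $\sym\pshA\simeq\freerigA$, which the paper uses without comment in the proof of \cref{lem:opdrig=em-frig}.

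The problem is the pseudomonadicity step. In your first (Beck) route you verify only the easy condition, reflection of equivalences. In the ``alternative'' paragraph, ``hence'' is a non sequitur. Preservation of sifted bicolimits by $\sfU\Sym$ is the kind of input one feeds into Beck's condition (ii); it does not by itself make the comparison a biequivalence. All the weight rests on the ``because'' clause (pseudoalgebras are symmetric pseudomonoids), which is exactly the coherence problem you then defer. Moreover, the sifted claim holds only for a 2-dimensional notion of siftedness. For bicolimits, finality of the diagonal requires its comma categories to have trivial groupoid reflection, not merely to be connected. With the 1-categorical notion used in the paper the claim fails: reflexive bicoequalisers do not commute with binary products in $\Gpd$. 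Applying $\Psh$, which preserves bicolimits and sends products to $\otimes$, shows that $\cocC\mapsto\cocC\otimes\cocC$ does not preserve them in $\Pres$ either.

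\textbf{How to close it.} Finish your Beck route, which needs no siftedness at all.
\begin{itemize}
\item Let $X_\bullet$ be a pseudo-codescent diagram in $\Rig$ whose image in $\Pres$ is split, with codescent object $X$. A split codescent object is absolute.
\item The levelwise tensor $X_\bullet\otimes X_\bullet$ is split by the tensor of the splittings, so its codescent object is $X\otimes X$; the same holds for higher tensor powers.
\item The face maps are 2-rig morphisms, so the multiplications $X_n\otimes X_n\to X_n$ form a morphism of codescent diagrams and descend to $X\otimes X\to X$.
\item Unit, associativity and symmetry descend in the same way, using the two-dimensional part of the universal property.
\item The same argument shows that $X_0\to X$ is a codescent object in $\Rig$, and that $\sfU$ preserves it.
\end{itemize}
Together with the existence of $\Sym$ and reflection of equivalences, the bicategorical Beck theorem \cite{CreurerI:bectpm} then gives pseudomonadicity. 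This is the 2-dimensional analogue of the classical fact that monoids in a monoidal category with free monoids are monadic. It avoids the unbiased coherence identification you flagged as the main obstacle.
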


\begin{rmk}
\label{rmk:adj-smcat-rig}
The bicategory $\Rig$ comes with another forgetful functor to the bicategory $\SMCAT$ of large symmetric monoidal categories.
This functor has a relative left biadjoint, defined on the full subcategory $\SMCat\subseteq\SMCAT$ of small symmetric monoidal categories, sending such a category $\moncatM$ to the category of presheaves $\pshM=\fun{M\op}\Set$ equipped with the Day convolution product.
\begin{equation}
\label{eq:adj-smcat-rig}
\begin{tikzcd}
&\Rig\ar[d,"\sfU"]\\
\SMCat \ar[r,hook]\ar[ru,"\Psh"] & \SMCAT
\end{tikzcd}
\end{equation}
See~\cite{AnelM:sym2rcc} for additional details.
\end{rmk}

\subsection{Tameness}

The aim of this section is to show that the bicategory $\RIG$ is tame, in the sense of \cref{defn:tame}.
In fact, we shall in fact prove a slightly stronger result.

Recall that a category $I$ is \emph{sifted} if the diagonal functor $\delta_2 \co I\to I\times I$ is a cofinal functor. When this is then case, then all the diagonal functors $\delta_n \co I\to I^n$, for $n > 2$, are cofinal functors. The terminal category $\mathsf{1}$ is sifted. The class of sifted colimits is the class of colimits indexed by sifted categories. The next lemma is crucial.

\begin{lem}
\label{lem:forget-sifted}
Let $\rigR$ and $\rigS$ be 2-rigs. The forgetful functor 
\[
U_{\rigR, \rigS} \co \hom\Rig\rigR\rigS \stto \fun\rigR\rigS
\] 
creates sifted colimits.
\end{lem}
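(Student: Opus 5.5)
Let $D \co I \to \hom\Rig\rigR\rigS$ be a diagram indexed by a sifted category $I$, and write $F_i$ for the underlying functors. I will compute the colimit $F \defeq \colim_i F_i$ pointwise in $\fun\rigR\rigS$, so $F(x) = \colim_i F_i(x)$, which exists because $\rigS$ is cocomplete. Then I will show three things in turn: $F$ carries an essentially unique 2-rig morphism structure for which the coprojections $F_i \Rightarrow F$ are monoidal transformations; with this structure $F$ is a colimit in $\hom\Rig\rigR\rigS$; and any colimit cone in $\hom\Rig\rigR\rigS$ whose image is a colimit in $\fun\rigR\rigS$ is isomorphic to this one. Together these give creation in the usual sense.

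Cocontinuity of $F$ is immediate, since colimits commute with colimits. The monoidal structure is the heart of the argument. For $x,y \in \rigR$ I compute
\[
F(x)\otimes F(y) \cong \colim_{(i,j)\in I\times I} F_i(x)\otimes F_j(y) \cong \colim_{i\in I} F_i(x)\otimes F_i(y) \cong \colim_{i\in I} F_i(x\otimes y) = F(x\otimes y).
\]
The first isomorphism holds because $\otimes$ on $\rigS$ is cocontinuous in each variable. The second holds because $\delta_2 \co I \to I\times I$ is cofinal, which is where siftedness is used. The third uses the structure isomorphisms of the $F_i$, which are natural in $i$ because the transition maps are monoidal transformations. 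For the unit, $\unit \cong \colim_i \unit \cong \colim_i F_i(\unit) = F(\unit)$, which uses that $I$ is connected; sifted categories are nonempty and connected, since the colimit of the constant diagram is the colimit over $I\times I$. All the maps here are invertible, so $F$ is strong monoidal.

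Next I will check the coherence axioms (associativity, unit and symmetry) for $F$. The plan is to reduce them to those of the $F_i$. For the ternary associativity hexagon I will use the cofinality of $\delta_3 \co I \to I^3$: both composites, restricted along the coprojections $F_i(x)\otimes F_i(y)\otimes F_i(z) \to F(x)\otimes F(y)\otimes F(z)$, agree with the corresponding composite for $F_i$. These coprojections are jointly epimorphic because, by cocontinuity of $\otimes$ and cofinality of $\delta_3$, they exhibit a colimit. Symmetry and unit are handled the same way, and this also makes the coprojections $F_i \Rightarrow F$ monoidal by construction.

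For the universal property, take a cocone $F_i \Rightarrow G$ in $\hom\Rig\rigR\rigS$. It induces a unique natural transformation $F \Rightarrow G$, and its monoidality can again be tested after precomposition with the jointly epimorphic maps $F_i(x)\otimes F_i(y) \to F(x)\otimes F(y)$, where it reduces to the monoidality of each $F_i \Rightarrow G$. Uniqueness of the structure is forced in the same way: any monoidal structure on $F$ making the coprojections monoidal must agree with mine after restriction along these jointly epic families. I expect the main obstacle to be bookkeeping rather than a conceptual difficulty, namely making sure that the joint-epi argument through the diagonal legitimately identifies the maps for the coherence axioms. This is exactly where the cofinality of the $\delta_n$ for $n\geq 2$, not just $n=2$, gets used.
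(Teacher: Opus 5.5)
Your proposal is correct and takes essentially the paper's route: compute the colimit pointwise in $\fun{\rigR}{\rigS}$, then build the monoidal structure from the cocontinuity of $\otimes$ in each variable and the cofinality of the diagonals $\delta_n$. The differences are bookkeeping. The paper reduces creation to conservativity plus lifting and leaves conservativity and coherence to the reader. Your jointly epimorphic diagonal families instead settle explicitly the coherence axioms, the universal property of the lifted cocone, and the uniqueness of the structure.
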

\begin{proof}
The colimits indexed by $I=1$ are created by $U$ if and only if $U$ is conservative.
We leave to the reader the proof that $U$ is indeed conservative.
Recall that a conservative functor creates a colimit if and only if it lifts it.
So, it is sufficient to show that $U$ lifts sifted colimits.
The category $\fun\rigR\rigS$ is cocomplete since $\rigS$ is, in particular it has sifted colimits.
Let $I$ be a sifted category, $f \co I\to \hom\Rig\rigR\rigS$ be a diagram of rig morphisms, 
and $f = \colim f_i$ be its colimit calculated in $\fun\rigR\rigS$.
We need to prove that the colimit cocone $(f_i \to f)_{i \in I}$ can be enhanced into a colimit in $\hom\Rig\rigR\rigS$.
The functor $f$ can be equipped with a 2-rig morphism structure by means of the following canonical isomorphisms (for any $n\geq 0$)
\begin{align*}
f(x_1\otimes \dots \otimes x_n)
& = \colim_i f_i(x_1\otimes \dots \otimes x_n)\\
&\cong \colim_i f_i(x_1) \otimes \dots \otimes f_i(x_n) && \text{since $f_i$ is a 2-rig morphism}\\
&\cong \colim_{i_1,\dots,i_n} f_{i_1}(x_1) \otimes \dots \otimes f_{i_n}(x_n)&& \text{since $\delta_n \co I\to I^n$ is cofinal}\\
&\cong \Big(\colim_{i_1} f_{i_1}(x_1)\Big) \otimes \dots \otimes \Big(\colim_{i_n} f_{i_n}(x_n)\Big)&& \text{since $\otimes$ preserves colimits}\\
&= f(x_1) \otimes \dots \otimes f(x_n)\,.
\end{align*}
We leave the reader to check that these satisfy the coherence conditions for a strong monoidal
functor, thereby making it into a 2-rig morphism. This concludes the proof that $U$ lifts sifted colimits.
\end{proof}

\begin{prop}
\label{prop:rigs-local-sifted}
The $2$-category $\Rig$ has local sifted colimits, 
\ie~for every $\rigR$ and $\rigS$ in $\Rig$, the hom-category $\hom\Rig\rigR\rigS$ has sifted colimits and the composition functors of $\Rig$ preserve sifted colimits in each variable.
\end{prop}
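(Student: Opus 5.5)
The plan is to deduce everything from \cref{lem:forget-sifted}. For 2-rigs $\rigR$ and $\rigS$, existence of sifted colimits in $\hom\Rig\rigR\rigS$ is immediate: $\fun\rigR\rigS$ has all small colimits, computed pointwise in $\rigS$, and the forgetful functor $U_{\rigR,\rigS}$ creates sifted colimits. In particular, a sifted colimit of 2-rig morphisms is computed pointwise, $(\colim_i f_i)(x) = \colim_i f_i(x)$. The colimit so obtained is still cocontinuous, since colimits commute with colimits, and it is strong monoidal by the lemma.

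Next I will show that the composition functors preserve sifted colimits in each variable. Fix a sifted diagram $(f_i)_{i\in I}$ in $\hom\Rig\rigR\rigS$ with colimit $f$, and let $g \co \rigS \to \rigT$ and $h \co \rigT' \to \rigR$ be 2-rig morphisms.

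\begin{itemize}
\item \emph{Postcomposition.} Consider the canonical comparison $\colim_i (g\circ f_i) \to g\circ f$. By creation, the source is computed pointwise as $\colim_i g(f_i(x))$. The comparison is therefore the map $\colim_i g(f_i(x)) \to g(\colim_i f_i(x))$, which is invertible because $g$ is cocontinuous.
\item \emph{Precomposition.} The comparison $\colim_i (f_i \circ h) \to f\circ h$ is pointwise the identity of $\colim_i f_i(h(y))$, again because colimits in $\fun{\rigT'}\rigS$ are pointwise.
\end{itemize}

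In both cases the comparison map is an isomorphism of underlying functors. It is automatically a monoidal transformation, being the canonical map induced by the colimit cocone. Since $U$ is conservative (part of the creation statement), it is therefore an isomorphism in the hom-category of 2-rigs.

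The only step needing care is identifying the canonical comparison in $\hom\Rig{}{}$ with the pointwise one in $\fun{}{}$. This follows from $U$ creating, and hence preserving, sifted colimits: $U$ sends the colimit cocone in $\hom\Rig\rigR\rigS$ to a colimit cocone in $\fun\rigR\rigS$. Everything therefore reduces to the familiar facts that in $\fun\rigR\rigS$ precomposition preserves all colimits and postcomposition by a cocontinuous functor does too. Tameness of $\Rig$ then follows, because reflexive coequalisers are sifted colimits.
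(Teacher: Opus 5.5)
Your proof is correct and follows the same route as the paper. Existence of sifted colimits in the hom-categories comes from \cref{lem:forget-sifted}, and preservation in each variable comes from colimits in functor categories being pointwise, together with cocontinuity of the postcomposed 2-rig morphism; your identification of the comparison maps via creation and conservativity of the forgetful functor just makes explicit what the paper leaves implicit.
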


\begin{proof}
\Cref{lem:forget-sifted} shows that $\Rig$ has local sifted colimits.
For preservation of colimits, let $\rigR$, $\rigS$ and $\rigT$ be 2-rigs 
and consider the composition functor 
\[
\circ \co \hom\Rig\rigS\rigT  \times \hom\Rig\rigR\rigS \stto \hom\Rig\rigR\rigT \,.
\] 
For a 2-rig morphism $g \co \rigS \to \rigT$, the functor $g \circ (-)$ defined
by composition with $g$ preserves sifted colimits since colimits in functor categories
are calculated pointwise and $g$ is cocontinuous. For a 2-rig morphism
$f \co \rigR \to \rigS$, the functor $(-) \circ f$ defined by composition with $f$
preserves sifted colimits since colimits are calculated pointwise.
\end{proof}

\begin{cor}
\label{cor:RIG-is-tame}
The $2$-category $\Rig$ is tame.
\end{cor}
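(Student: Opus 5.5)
The plan is to deduce the corollary directly from \cref{prop:rigs-local-sifted}, by showing that reflexive coequalisers are sifted colimits. By \cref{defn:tame}, tameness of $\Rig$ requires two things: every hom-category $\hom\Rig\rigR\rigS$ has reflexive coequalisers, and the composition functors preserve them in each variable. \Cref{prop:rigs-local-sifted} gives exactly these two properties for all sifted colimits. So the only thing left is to check that reflexive coequalisers belong to this class.

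The indexing category of a reflexive coequaliser is the category $I$ with two objects $0,1$, two parallel arrows $d_0,d_1 \co 1\to 0$, and a common section $s \co 0\to 1$ with $d_0 s = d_1 s = \id$. I will verify that the diagonal $\delta_2 \co I\to I\times I$ is cofinal. Concretely, for each object $(a,b)$ of $I\times I$, the comma category $(a,b)\downarrow\delta_2$ should be nonempty and connected.

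\begin{itemize}
\item Nonemptiness: every object maps to $0$, so the pair $(a\to 0,\, b\to 0)$ always gives an object of the comma category.
\item Connectedness: I will connect an arbitrary object $((a,b)\to(c,c))$ to one landing in $(0,0)$ by postcomposing with a map $c\to 0$. The reflexivity arrow $s$ is what lets me identify the different possible choices, since it makes the two composites $d_0,d_1$ agree after precomposition.
\end{itemize}

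This finite case check is the only real work. It is a classical fact (e.g.\ in the Adámek--Rosický--Vitale treatment of sifted colimits), so I would cite it and sketch only the connectivity argument for $(1,1)$, which is the slightly delicate case.

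With this in hand, \cref{lem:forget-sifted} shows that each $\hom\Rig\rigR\rigS$ has reflexive coequalisers, created from those of $\fun\rigR\rigS$. \Cref{prop:rigs-local-sifted} then shows they are preserved by composition on either side. Hence $\Rig$ is tame.
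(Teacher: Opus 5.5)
Your proposal is correct and follows the paper's proof exactly: the paper also deduces tameness directly from \cref{prop:rigs-local-sifted}, using the fact that reflexive coequalisers are sifted colimits. The only difference is that you also verify that the diagonal of the reflexive-pair shape is cofinal, which the paper takes as known; your connectivity check is correct, including the $(1,1)$ case, where the endomorphisms $s d_0$ and $s d_1$ of $1$ link the four objects over $0$.
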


\begin{proof} The claim follows from \cref{prop:rigs-local-sifted} since reflexive coequalizers are sifted colimits.
\end{proof}

\begin{rmk}
Recall that any full sub-bicategory of a tame category is tame (\cref{rmk:sub-tame=tame}).
Under the equivalences $\FRig\simeq\CatSym$ and $\FsetRig \simeq \SetSym$ (\cref{prop:freerig-biequivalence}) we recover that $\CatSym$ and $\SetSym$ are tame bicategories \cite[Corollary 4.4.9]{GambinoN:opebaf}.
\end{rmk}

\subsection{Monads on 2-rigs}

\begin{defn} \label{thm:2rigmonad}
Let $\rigR$ be a 2-rig. A \emph{$2$-rig monad} on $\rigR$ is a 
monad $(P,\mu,\eta) \co \rigR \to \rigR$ such that $P \co \rigR \to \rigR$ is a 2-rig morphism, and $\mu \co P \circ P \Rightarrow P$ and $\eta \co \id_\rigR \Rightarrow P$ are 2-rig transformations.
\end{defn}

For a 2-rig $\rigR$ and a monad on $\rigR$ as above, we write $\rigR^P$ for the category of Eilenberg--Moore algebras for $P$ and $\rigR_P$ for the Kleisli category of $P$, for the moment considered as mere categories.
We establish the last main result of this section.

\begin{thm}
\label{thm:rig-em-complete} 
The $2$-category $\Rig$ is Eilenberg--Moore--Kleisli complete, and Eilenberg--Moore--Kleisli objects can be computed as Eilenberg--Moore objects in $\CAT$.
\end{thm}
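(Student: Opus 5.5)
The plan is to show that, for a 2-rig monad $(P,\mu,\eta)$ on a 2-rig $\rigR$, the category $\rigR^P$ of Eilenberg--Moore algebras carries a 2-rig structure. With this structure, the forgetful functor $U\co\rigR^P\to\rigR$ should be an Eilenberg--Moore object in $\Rig$. Once this holds, \cref{cor:RIG-is-tame} and \cref{lem:em=k} make the left adjoint $F\co\rigR\to\rigR^P$ a Kleisli object, which gives Eilenberg--Moore--Kleisli completeness.

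The first step concerns the underlying category. Since $P$ is cocontinuous on a presentable category, \cref{prop:coc-emk-complete} shows that $\rigR^P$ is presentable and that $U$ is an Eilenberg--Moore object in $\Pres$. In particular, $U$ creates colimits.

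The second step is to put the monoidal structure on $\rigR^P$. Since $P$ is symmetric strong monoidal and $\mu,\eta$ are monoidal transformations, $P$ is a symmetric monoidal monad. For algebras $(a,\alpha)$ and $(b,\beta)$, I would take the structure map on $a\otimes b$ to be
\[
P(a\otimes b)\cong Pa\otimes Pb\xrightarrow{\alpha\otimes\beta}a\otimes b,
\]
and equip the unit $\unit$ with $P\unit\cong\unit$. The algebra axioms follow from monoidality of $\mu$ and $\eta$, and the associator, unitors and symmetry lift because they are natural and compatible with the strong monoidal structure of $P$. Thus $U$ creates the symmetric monoidal structure, and $U$ and $F$ become strong monoidal: $F$ is strong because $P(a\otimes b)\cong Pa\otimes Pb$. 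The tensor on $\rigR^P$ is cocontinuous in each variable because colimits are created by $U$ and the tensor of $\rigR$ preserves them. Hence $\rigR^P$ is a 2-rig, $U$ is a 2-rig morphism with left $P$-module structure $UF\Rightarrow\id$, and $F$ is a 2-rig morphism.

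The third step, which I expect to be the main obstacle, is the universal property. For each 2-rig $X$, I must show that composition with $U$ induces an equivalence
\[
\hom\Rig X{\rigR^P}\simeq \LMod\Rig X\rigR P .
\]
A left module is a 2-rig morphism $\ell\co X\to\rigR$ with a monoidal action $P\ell\Rightarrow\ell$. The $\Pres$ universal property already gives a cocontinuous lift $\ell^P\co X\to\rigR^P$, sending $x$ to $\ell x$ with its induced action. It remains to show that the monoidal constraints $\ell(x)\otimes\ell(y)\cong\ell(x\otimes y)$ are $P$-algebra morphisms for the tensor structure above. This is exactly the monoidality of the action 2-cell. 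For 2-cells, monoidal transformations lift because $U$ is faithful and monoidal structure is detected underlying. I would cross-check this against Remark~\ref{rmk:adj-smcat-rig} and \cref{thm:sym-alg-coc}. Pseudomonadicity of $\Rig$ over $\Pres$ also suggests that Eilenberg--Moore objects, being lax limits, are created. However, that creation argument concerns lax limits of diagrams in $\Rig$, so I would use it only as a heuristic and verify the lift directly.

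Finally, $\rigR^P$ is by construction the Eilenberg--Moore object in $\CAT$, which proves the second clause of the statement.
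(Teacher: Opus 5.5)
Your proof is correct, but it argues differently from the paper. The paper's proof is purely structural. By \cref{thm:sym-alg-coc} the forgetful pseudofunctor $\Rig\to\Pres$ is pseudomonadic, so it creates Eilenberg--Moore objects. Composing with \cref{prop:coc-emk-complete} gives creation over $\CAT$. Tameness (\cref{cor:RIG-is-tame}, \cref{lem:em=k}) then makes these objects Kleisli objects as well.

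You do the first step by hand instead. You lift the symmetric monoidal structure to $\rigR^P$ along the (op)monoidal monad $P$, so that $U$ is strict monoidal and $F$ is strong monoidal. You then check the universal property directly: monoidality of the action $\lambda\co P\ell\Rightarrow\ell$ is exactly what makes the constraints of $\ell$ into $P$-algebra maps. Since $U$ is faithful and strict monoidal, 2-cells and coherence axioms are detected on underlying data.

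The paper's route is shorter and reusable; the same pattern gives \cref{prop:VRIG-tame}. It does rely on the pseudomonadicity result proved elsewhere, and on the fact that pseudomonadic pseudofunctors create lax limits of this kind. Your caution on that point is harmless but unnecessary. Eilenberg--Moore objects can be built from inserters and equifiers (see the remark after \cref{prop:coc-emk-complete}), and these PIE-type limits are lifted and preserved by forgetful pseudofunctors from pseudoalgebras and pseudomorphisms.

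Your route is self-contained and needs no monadicity theorem. It also gives an explicit description of the tensor product on $\rigR^P$, which the paper uses implicitly later, for instance in \cref{lem:monad-presheaves-monoidal} and \cref{prop:drig-em-complete}.
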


\begin{proof} 
We follow the same argument as in \cref{prop:coc-emk-complete}.
By \cref{thm:sym-alg-coc}, $\Rig$ is pseudomonadic over $\Pres$ and therefore the forgetful functor $U \co \Rig \to \Pres$ creates all 2-categorical limits, in particular Eilenberg--Moore objects.
Then the creation of Eilenberg--Moore objects by the forgetful pseudofunctor to $\CAT$ follows from \cref{prop:coc-emk-complete}.
Finally, these are Eilenberg--Moore--Kleisli objects because $\Rig$ is tame by \cref{cor:RIG-is-tame}.
\end{proof}

The following lemma is left to the reader.

\begin{lem}
\label{lem:image-monoidal}
For a symmetric monoidal functor $f \co \moncatM\to \moncatN$, the image $\mathsf{Im}(F)\subseteq\moncatN$ of $f$ computed in~$\Cat$ is closed under the monoidal structure.
\end{lem}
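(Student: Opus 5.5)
The plan is to check directly that the image is closed under the unit and under binary tensors. Here $\mathsf{Im}(f)$ is the full subcategory of $\moncatN$ spanned by objects isomorphic to some $f(m)$, with $m$ in $\moncatM$. Since $f$ is symmetric strong monoidal, it comes with structure isomorphisms $\unit_\moncatN \cong f(\unit_\moncatM)$ and $f(m)\otimes f(m') \cong f(m\otimes m')$.

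The key steps, in order, are as follows.
\begin{enumerate}
\item The unit $\unit_\moncatN$ lies in $\mathsf{Im}(f)$, because it is isomorphic to $f(\unit_\moncatM)$.
\item Let $n\cong f(m)$ and $n'\cong f(m')$ be objects of the image. Functoriality of $\otimes$ gives $n\otimes n' \cong f(m)\otimes f(m')$, and the strong monoidal structure gives $f(m)\otimes f(m')\cong f(m\otimes m')$. Hence $n\otimes n'$ lies in $\mathsf{Im}(f)$.
\item Because $\mathsf{Im}(f)$ is a full subcategory, it is closed under tensors of morphisms as well. The associators, unitors and symmetry of $\moncatN$ restrict to it, and the coherence axioms hold since they already hold in $\moncatN$.
\end{enumerate}
This makes $\mathsf{Im}(f)$ a symmetric monoidal subcategory of $\moncatN$.

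For completeness I would also note two further points. The essentially surjective part $\moncatM\to\mathsf{Im}(f)$ inherits a symmetric strong monoidal structure, obtained by corestricting the structure isomorphisms of $f$. The inclusion $\mathsf{Im}(f)\hookrightarrow\moncatN$ is strict monoidal, so the image factorisation of $f$ in $\Cat$ lifts to $\SMCat$.

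None of these steps is a genuine obstacle. The one point needing care is that the image is taken to be replete, that is, closed under isomorphism, as in the paper's convention. Repleteness is what lets objects merely isomorphic to a tensor of images, such as $n\otimes n'$, belong to $\mathsf{Im}(f)$. Without it, closure would only hold up to isomorphism.
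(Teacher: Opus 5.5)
Your proof is correct, and it is the routine verification that the paper itself leaves to the reader. Strong monoidality of $f$ puts $\unit$ and $n\otimes n'$ in the replete image, and fullness then lets the tensor of morphisms, the coherence isomorphisms and the symmetry restrict; your emphasis on repleteness matches the paper's convention that $\mathsf{Im}(f)$ is closed under equivalences.
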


Recall the definition of the functor $K$ in~\eqref{eq:from-pshaA-to-pshAa}.
The next lemma extends \cref{lem:monad-presheaves}.

\begin{lem}  \label{lem:monad-presheaves-monoidal}
Let $\moncatM$ be a symmetric monoidal category and 
 $P \co \pshM \to \pshM$ be a 2-rig monad. The equivalence
$K \co \psh{\moncatM_P} \to \pshM^P$ is a symmetric strong monoidal functor.
\end{lem}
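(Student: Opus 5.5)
The plan is to show that both $\psh{\moncatM_P}$ (with Day convolution) and $\pshM^P$ (with the 2-rig structure of \cref{thm:rig-em-complete}) are symmetric monoidal, and that $K$ is compatible with the tensors on representables. Since both tensor products are cocontinuous in each variable and $K$ is cocontinuous, the comparison then extends to all presheaves.

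First, I would check that $\moncatM_P$ is a symmetric monoidal full subcategory of $\pshM^P$. The free-algebra functor $F \co \pshM \to \pshM^P$ is the left adjoint $P^P$ of the Eilenberg--Moore object, which in $\Rig$ is a 2-rig morphism: the Eilenberg--Moore object is created in $\Rig$, and its left adjoint is a Kleisli map by \cref{lem:em=k}. Consequently the composite $\moncatM \to \pshM \to \pshM^P$ is symmetric strong monoidal, since the Yoneda embedding is strong monoidal for Day convolution. By \cref{lem:image-monoidal}, its image $\moncatM_P$ is closed under the tensor product and unit of $\pshM^P$. It therefore inherits a symmetric monoidal structure, for which both the inclusion $j \co \moncatM_P \hookrightarrow \pshM^P$ and $i \co \moncatM \to \moncatM_P$ are symmetric strong monoidal. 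This makes $\psh{\moncatM_P}$ a convolution 2-rig.

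Next, I would use the universal property of Day convolution from \cref{rmk:adj-smcat-rig}. Restriction along Yoneda gives an equivalence between 2-rig morphisms $\psh{\moncatM_P} \to \rigR$ and symmetric strong monoidal functors $\moncatM_P \to \sfU\rigR$. Applying this to the strong monoidal inclusion $j$ yields a 2-rig morphism $\tilde K$ whose underlying functor is the cocontinuous extension of $j$, that is, the left Kan extension defining $K$ in~\eqref{eq:from-pshaA-to-pshAa}. Hence $K \cong \tilde K$ carries a symmetric strong monoidal structure, and it is an equivalence by \cref{lem:monad-presheaves}.

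The main obstacle is the first step: correctly identifying the monoidal structure on $\pshM^P$ and verifying that $F$ is strong monoidal. Concretely, this requires that the tensor of free algebras $F(m)\otimes F(m')$ is again free, on $m\otimes m'$. This follows from \cref{thm:rig-em-complete} combined with the adjoint-transfer argument above: in the lifted structure $U$ is lax monoidal, and its left adjoint $F$ receives an oplax structure that is invertible because it is the Kleisli map in $\Rig$. If that route proves delicate, I would instead verify directly that $P(x\otimes y)\cong Px\otimes Py$ makes $F$ strong, using that $P$ is a 2-rig monad.
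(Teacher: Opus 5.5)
Your proposal is correct and follows the same route as the paper. In both arguments, $\moncatM_P$ is the image of the strong monoidal composite $\moncatM \to \pshM \to \pshM^P$, and so is a symmetric monoidal full subcategory by \cref{lem:image-monoidal}; then $K$, as the left Kan extension of the strong monoidal inclusion, is strong monoidal by the universal property of Day convolution, which the paper cites from Day and Im--Kelly.

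Your justification that the free-algebra functor is a 2-rig morphism, being the left adjoint of the Eilenberg--Moore object in $\Rig$, supplies a step the paper leaves implicit. One small correction: in that structure $U$ is already strong monoidal, not merely lax, but this does not affect the argument.
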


\begin{proof}
The fact that $K$ is an equivalence is \cref{lem:monad-presheaves}.
We need to see that it is a symmetric strong monoidal functor.
By construction, $\moncatM_P$ is the image of the monoidal functor $\moncatM\to \pshM\xrightarrow P \pshM$.
By \cref{lem:image-monoidal}, it is a symmetric monoidal full subcategory of $\pshM$.
Thus, the functor $K$ is the left Kan extension of the inclusion $i \co \moncatM_P \hookrightarrow \pshM^P$, which is strong monoidal, and hence $K$ is again a strong monoidal functor \cite{DayB:clocf,ImG:unipcm}.
\end{proof}

\begin{prop}
\label{prop:drig-em-complete} 
The sub-bicategory $\CRig\subseteq\Rig$ is closed under Eilenberg--Moore--Kleisli objects.
\end{prop}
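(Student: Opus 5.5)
The plan is to reduce the statement to \cref{thm:rig-em-complete} together with \cref{lem:monad-presheaves-monoidal}. Let $\moncatM$ be a small symmetric monoidal category and $P \co \pshM \to \pshM$ a monad in $\Rig$, that is, a 2-rig monad in the sense of \cref{thm:2rigmonad}. By \cref{thm:rig-em-complete}, its Eilenberg--Moore object exists in $\Rig$. Moreover, it is computed as the category $\pshM^P$ of Eilenberg--Moore algebras in $\CAT$, with the symmetric monoidal structure created by the forgetful pseudofunctor. Because $\Rig$ is tame (\cref{cor:RIG-is-tame}), \cref{lem:em=k} shows that this object is also the Kleisli object. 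So it suffices to show that $\pshM^P$, with this 2-rig structure, is equivalent in $\Rig$ to a convolution 2-rig.

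The candidate is $\psh{\moncatM_P}$ with Day convolution. Here $\moncatM_P$ is the full image of $\moncatM \to \pshM \xrightarrow{F} \pshM^P$, and it is small because $\moncatM$ is. First I check that $\moncatM_P$ is a symmetric monoidal category. The free-algebra functor $F$ is the left adjoint in the Eilenberg--Moore adjunction in $\Rig$, so it is a 2-rig morphism and in particular strong symmetric monoidal. The Yoneda embedding is strong monoidal for Day convolution. Applying \cref{lem:image-monoidal} to the composite then shows that $\moncatM_P$ is closed under the tensor product of $\pshM^P$, and so it inherits a symmetric monoidal structure for which the inclusion is strong monoidal. Next, \cref{lem:monad-presheaves-monoidal} states that $K \co \psh{\moncatM_P} \to \pshM^P$ is an equivalence and is symmetric strong monoidal. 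It is cocontinuous, being a left Kan extension along the Yoneda embedding. Hence $K$ is an equivalence in $\Rig$, and $\pshM^P$ lies in $\CRig$.

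The main obstacle is checking that the monoidal structure used in \cref{lem:monad-presheaves-monoidal} really is the one that \cref{thm:rig-em-complete} places on the Eilenberg--Moore object. Concretely, this structure is the one making $U$ strict and $F$ strong monoidal, with $F(x)\otimes F(y)\cong F(x\otimes y)$. I would settle this by observing that the Day extension of the strong monoidal inclusion $\moncatM_P \hookrightarrow \pshM^P$ is strong monoidal for the tensor of $\pshM^P$ whenever that tensor is cocontinuous in each variable. This holds here, since $\pshM^P$ is a 2-rig. Finally, because $\CRig$ is a full sub-bicategory of $\Rig$ and is replete under equivalences, and the universal property is stated in $\Rig$, $\pshM^P$ with its universal module is also an Eilenberg--Moore--Kleisli object inside $\CRig$.
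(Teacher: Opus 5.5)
Your proposal is correct and takes essentially the same route as the paper. Both reduce the statement to \cref{lem:monad-presheaves-monoidal}, which identifies the Eilenberg--Moore object $\pshM^P$ with the convolution 2-rig $\psh{\moncatM_P}$ through the strong monoidal equivalence $K$, and your additional checks only make explicit what the paper leaves implicit: that the object is computed in $\CAT$ by \cref{thm:rig-em-complete}, that it is also the Kleisli object by tameness, and that $\moncatM_P$ is a monoidal image under the free-algebra functor into $\pshM^P$.
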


\begin{proof} 
Let $\moncatM$ be a symmetric monoidal category and $\pshM$ the associated convolution 2-rig.
Let $P \co \pshM\to \pshM$ be a monad in $\RIG$.
By \cref{lem:monad-presheaves-monoidal}, the Eilenberg--Moore category $\pshM^P$
of $m$ is equivalent to $\psh{\moncatM_P}$ where $\moncatM_P$ is the category of free $P$-algebras on the objects of $\moncatM$. 
This shows that the Eilenberg--Moore object of a 2-rig monad on a convolution 2-rig is again a convolution 2-rigs.
\end{proof}

\section{Operadic 2-rigs}
\label{sec:operadic-2-rigs}

The aim of this section is to expand the diagram in~\eqref{eq:cat-smcat-2rig} into a diagram
\begin{equation}
\label{eq:cat-opd-smcat-2rig}
\begin{tikzcd}[row sep = large]
\Cat
\ar[d,two heads]
\ar[r, hook]
&\Opd
\ar[d,two heads]
\ar[r,"\Env"]
&\SMCat
\ar[d,two heads]
\ar[dr,"\Psh", bend left]
\\
\FRig \ar[r,hook]
& \OpdRig \ar[r,hook]
& \CRig  \ar[r, hook]
& \Rig \,, 
\end{tikzcd}
\end{equation}
where $\Opd$ is the bicategory of  operads and $\OpdRig$ a bicategory of rigs generated by operads, both to be introduced below.

\subsection{Operads}
\label{sec:coloured-operads}

\begin{defn} \label{defn:coloured-operad}
Let $A$ be a set. A \emph{coloured operad} $\opdP$ with set of colours $A$ consists of:
\begin{itemize}
\item a functor
\begin{align*} 
\opdP  \co  \freesmcA\op \times  A  & \tto \Set \\
 ( \symelt an, a)  & \mto \matrice\opdP{a_1, \ldots, a_n}a \,,
\end{align*} 
called the \emph{functor of operations} of $\opdP$,

\item a natural transformation with components 
\[
\matrice\opdP{\vec a''_1}{a'_1}
\times\ldots\times
\matrice\opdP{\vec a''_n}{a'_n}
\times
\matrice\opdP{\vec a'}x
\stto
\matrice\opdP{\vec a''_1 \otimes \ldots \otimes \vec a''_n}a
\,,
\]
for $a$ in $\catA$, 
$\vec a' = \symelt{a'}n$ in $\freesmcA$, 
$\vec a''_1, \ldots, \vec a''_n$ in $\freesmcA$, 
and $n$ in $\mathbf{N}$, called the \emph{composition} of $\opdP$,
\item elements $\id_a$ in $\matrice\opdP aa$, for $a$ in $\catA$, called the \emph{identity operations} of $\opdP$.
\end{itemize}
These data are subject to associativity and unitality axioms~\cite{yau2016colored}.
\end{defn}

As mentioned, we shall often omit  `coloured' and simply speak of operads.
For an operad $\opdP$ with set of colours $A$ as in \cref{defn:coloured-operad}, an element $f$ in $\matrice\opdP{a_1, \ldots, a_n}a$ will be called an \emph{operation} of $\opdP$ with \emph{input arities} $a_1, \ldots, a_n$ and \emph{output arity} $a$ and written~$f \co a_1, \ldots, a_n \to a$. For $n = 1$, we speak of a \emph{unary operation}.  Note that, even if $A$ is a set, $\freesmcA$ is a category (a groupoid, actually) and hence, by the functoriality of $F$, we have actions of the symmetric
groups on the sets of operations. Indeed, for a permutation $\sigma$ in $\mathfrak{S}_n$, we get a function
\[
\matrice\opdP{a_1, \ldots, a_n}a \stto \matrice\opdP{a_{\sigma(1)}, \ldots, a_{\sigma(n)}}a  \,.
\]
When an operad $\opdP$ has a single colour, we denote the set of $n$-ary operations simply by $\opdP(n)$ ($n\neq0$).
We shall say that an operad is \emph{discrete} if it has only identity operations.

\begin{defn} Let $\opdP$ be a operad with set of colours $\catA$, $\opdQ$ be a operad with set of colours $B$. A \emph{coloured operad morphism} $(f, \phi) \co \opdP \to \opdQ$ consists of:
\begin{itemize}
\item a function $f \co \catA \to \catB$, 
\item a natural transformation  $\phi \co \opdP \Rightarrow \opdQ \circ (\freesmc F \times F)$, with components
\begin{align*} 
\phi_{a_1, \ldots, a_n; x} \co  \matrice\opdP{a_1,\ldots, a_m}a & \stto \matrice\opdQ{f(a_1), \ldots, f(a_m)}{f(a)} 
\end{align*}
for $a_1, \ldots, a_n$, $a$ in $\catA$.
\end{itemize}
These data are subject to a functoriality condition.
\end{defn}

A morphism of operads $(f, \phi) \co \opdP\to \opdQ$ is \emph{fully faithful} if the natural transformation $\phi$ is invertible.
From now on, we shall refer to an operad morphism as above simply by the name
of its underlying function on objects when this does not cause confusion.
Finally, we introduce 2-cells. 

\begin{defn} \label{defn:opd-2cell} 
Let $f, g \co \opdP \to \opdQ$ be morphisms of operads.
 A  \emph{transformation}
$\alpha \co f \Rightarrow g$ consists of unary operations $\alpha_a \co f(a) \to g(a)$ in $\opdQ$ satisfying a naturality condition. 
\end{defn}

The full details of these definitions can be found in 
\cite{yau2016colored,ElmendorfA:rinmai}.
We write $\Opd$ for the bicategory of  operads, morphisms, and transformations.

\begin{rmk}
\label{rmk:1-cat-opd}
Operads and their morphisms also form a 1-category, that we shall denote by $\Opdone$.
The set of colors of an operad defines a functor $\Opdone\to \Set$.
\end{rmk}

\begin{ex}\leavevmode
\label{ex:opd}
\begin{enumerate}[label=(\alph*)]
\item\label{ex:opd:cat} \label{def:opd-surj} 
Any small category $C$ can be seen as an operad $\opdO_C$ with only unary operations.
When $C=1$ is the terminal category, we shall call $\opdO_1$ the \emph{operad of objects}.
Conversely, the unary operations  of an operad $\opdP$ defines a category $\opdP_1$, called the \emph{underlying category of $\opdP$}.
This provides a pseudoadjunction
\begin{equation}
\label{eq:adj-cat-opd}
\begin{tikzcd}
\Cat
	\ar[r, shift left = 2, "i"] 
	\ar[r, phantom, description, "\scriptstyle{\vdash}" rotate=90] & 
\Opd  \,.
	\ar[l, shift left = 2, "(-)_1"] 
\end{tikzcd}
\end{equation}
We shall say that a morphism of operads $f \co \opdP\to \opdQ$ is \emph{essentially surjective} if the induced functor $f_1 \co \opdP_1\to \opdQ_1$ between underlying categories is essentially surjective.

\item\label{ex:opd:com}
The \emph{(unital) commutative operad} $\Com$ is the single-coloured operad defined by $\Com(n)=1$ (terminal object in $\Set$), with the obvious composition. This is the terminal (symmetric) operad, so every other operad $\opdP$ has a canonical (and unique) morphism $\opdP\to \Com$.

\item The \emph{non-unital commutative operad} $\Comnu$ has the same definition as $\As$ but with $\Comnu(0)=\emptyset$.

\item The \emph{unital operad} $\Ezero$ is the single-coloured operad defined by $\Ezero(0)=1$ and $\Ezero(n)= \empty$ if $n\geq1$, with the obvious composition.

\item The \emph{(unital) associative operad} $\As$ is the single-coloured operad defined by $\As(n)=\mathfrak S_n$, the symmetric group on $n$ elements, with its canonical action by translation.
The composition is induced by the canonical maps $\mathfrak S_p\times \mathfrak S_q\to \mathfrak S_{p+q}$.
There is a canonical morphism of operads $\Ezero\to \As$.

\item The \emph{non-unital associative operad} $\Asnu$ has the same definition as $\As$ but with $\As(0)=\emptyset$.
There is a canonical morphism of operads $\Asnu\to \As$.

\item The \emph{associative ``left module'' operad} $\AsMod$ is the operad with two colours $A=\{a,m\}$, and with operations 
\[
\matrice\AsMod {a, \dots,a}a = \matrice\AsMod {a,\dots,a,m}m = \As(n)\,.
\]
when there is a single occurence of $m$ and $n-1$ occurrences of $a$ in the contravariant variables in $\matrice\AsMod {a,\dots,a,m}m$,
and empty otherwise.
The composition is induced by that of $\As$, \cf \cite[Definition~4.2.1.1]{LurieJ:higa}.
More generally, the construction can be done for any operad $\opdP$ instead of $\As$\cite[12.3.1]{LodayJL:algo}.

\item  For any two operads $\opdP$ and $\opdQ$ there exists an operad $\fun\opdP\opdQ$ whose algebras are the $(\opdP,\opdQ)$-bimodules \cite{GambinoN:opebaf}. Even when $\opdP$ and $\opdQ$ have a single colour, the operad $\fun\opdP\opdQ$ has several colours.

\item  For a given set of colours $A$, there is also a coloured operads whose algebras are the operads coloured by $A$, see \cite[Example 2.2.23]{LeinsterT:higohc} and~\cite[Example~1.5.6]{BergerC:rescorha}.
\end{enumerate}	
\end{ex}

\begin{rmk}
\label{rmk:set-of-colours}
Taking the set of objects of an operad does not determine a pseudofunctor $\Opd\to \Set$ since an equivalence of operads need not induce a bijection between the sets of colours.
The situation is the same as with the set of objects of a category, which does not defined a pseudofunctor $\Cat\to \Set$.
Because of this, all the constructions using explicitly the set of objects are not immediately pseudofunctorial on $\Opd$ (\eg  \cref{prop:em-for-freerig} below).
Such constructions are however functorial on the 1-category $\Opdone$ of \cref{rmk:1-cat-opd}.

To go around this limitation, one can replace the set of colours by a more intrinsic object: either the category $\opdP_1$ of all unary operations, or  the groupoid $\opdP_1^\mathrm{inv}$ of invertible unary operations.
Alternatively, one can consider the more general notion of a substitude, which is defined like an operad but where the set $A$ of colours is allowed to be a category. Note, however, that this notion comes with a different notion of 2-cell~\cite{DayB:abssec,DayB:laxmpo}.
The bicategory $\Opd$ is a coreflective sub-bicategory bicategory of substitudes.
The coreflection is given by replacing the category of colours by the category of unary operations
(see \cite[5.17]{BataninM:regpsf}).

Despite the aforementioned limitation, we have preferred to use the sets of objects, which is the classical, and more intuitive, definition of an operad.
Some of our results (like \cref{prop:em-for-freerig}) 
could be strengthened, but we felt that the gain in generality would not offset the increase in complexity.
\end{rmk}

\begin{rmk}
\label{rmk:opd=monad}
An equivalent definition of an operad $\opdP$ with set of colours $A$ is as a  monad 
\[
{\monad\opdP \co \freerigA\to \freerigA}
\]
 in the bicategory $\SetSym = \FsetRig$ 
of \cref{prop:freerig-biequivalence}.
More precisely, the symmetric sequence of $\monad\opdP$ is given by the functor of operations of the operad $\opdP$ , the composition of the operad corresponds to the multiplication of the monad, the identities of the operad to the unit of the monad, see~\cite{BaezJ:higda} and~\cite[Example~4.1.4]{GambinoN:opebaf} for details. 
However, despite this coincidence, the category $\Opd$ is quite different from the category $\mathsf{Mnd}(\SetSym)$ of monads in $\SetSym = \FsetRig$ as defined in \cite{StreetR:fortm,LackS:fortmII}.
\end{rmk}

\medskip
The Hermida biadjunction between non-symmetric  operads
 (\ie~multicategories) 
and monoidal categories~\cite{HermidaC:repm} extends to a biadjunction between  operads 
(\ie~symmetric multicategories)
and symmetric monoidal categories, as shown in~\cite{ElmendorfA:percma} and \cite[\S5.2]{BataninM:regpsf}:
\begin{equation}
\label{eq:hermida-opd-smoncat}
\begin{tikzcd}
\Opd
	\ar[r, shift left = 2, "\Env"] 
	\ar[r, phantom, description, "\scriptstyle{\vdash}" rotate=90] & 
\SMCat  \,.
	\ar[l, shift left = 2, "\END"] 
\end{tikzcd}
\end{equation}

We provide some explicit definitions, as they will be useful in the rest of the paper.
The right biadjoint sends a symmetric monoidal category $\moncatM$ to the \emph{endomorphism operad}, written $\EndM$ and defined as follows.
The set of colours of $\EndM$ is the set of objects of $\moncatM$, while its operations are defined by letting
\[
\matrice\EndM{x_1, \ldots, x_n}x
\ \defeq\ 
\hom\moncatM{x_1 \otimes \cdots  \otimes x_n}x \,.
\]
The composition is defined using the functoriality of the tensor product of $\moncatM$.
The identities operations of $\EndM$ are the identities arrows of $\moncatM$.

The left biadjoint sends an operad $\opdP$ to its \emph{enveloping symmetric monoidal category}, written $\envP$ and defined as follows. 
The objects of $\envP$ are sequences of objects of $\catA$.
Given two sequences $\symelt am$ and $\symelt {a'}n$, 
a map $(f,p) \co \symelt am \to \symelt{a'}n$ consists of
a function $f \co \set m \to \set n$
and operations 
$p_j$ in $\matrice\opdP{\angles{a_i}_{i \in \phi^{-1}(j)}}{a'_j}$, for every $j$ in $\set n$. 
Here, the sequence $\angles{a_i}_{i \in \phi^{-1}(j)}$ is regarded as a subsequence of $\symelt am$, \ie~we consider the ordering of the objects as they appear in the original sequence. 
In other terms
\begin{equation}
\label{eq:env}
\Hom{\envP}{\symelt am}{\symelt{a'}n}
\ =\ 
\coprod_{\phi:\underline m\to \underline n}
\prod_{j=1}^n\matrice\opdP{\angles{a_i}_{i \in \phi^{-1}(j)}}{a'_j}
\end{equation}

For an operad $\opdP$, the unit of the biadjunction is the morphism of operads $\eta_\opdP \co \opdP \to \End \envP$ which sends an object $a$ in $\opdP$ to the one-element sequence~$\angles{a}$ in $\End \envP$. 
The biadjointness means that composition with 
$\eta_\opdP$ induces an equivalence of categories
\[
\begin{tikzcd}[column sep = 2cm]
\hom\SMCat\envP\moncatM \ar[r, "(-) \circ \eta_\opdP"]  & \hom\Opd\opdP\EndM 
\,,
\end{tikzcd}
\]
for every symmetric monoidal category $\moncatM$. 
Explicitly, this means that for every morphism of operads $f \co \opdP \to \EndM$ we
have an essentially unique symmetric strong monoidal functor $f^\dagger \co \envP \to \moncatM$
making the following diagram commute up to unique  natural isomorphism:
\[
\begin{tikzcd}
\opdP \ar[r, "\eta_\opdP"] \ar[dr, "f"']  & \End \envP \ar[d, "\End{f^\dagger}" ] \\
 & \EndM \,.
\end{tikzcd}
\]
The adjoint transpose $f^\dagger \co \envP \to \moncatM$ is defined on objects by mapping
a sequence $\symelt am$ in $\envP$ to the tensor product $f(a_1) \otimes \cdots \otimes f(a_m)$ in $\moncatM$.

\begin{ex}\leavevmode
\label{ex:env}
\begin{enumerate}[label=(\alph*)]
\item\label{ex:env:cat} When $\opdP=C$ is a category, \ie has only unary operations, we have $\envC = \freesmcC$.
This can be seen by the equation in~\eqref{eq:env},
or by noticing that the composition of the adjunctions in~\eqref{eq:adj-cat-opd}
and~\eqref{eq:hermida-opd-smoncat}
gives the adjunction $\Freesmc  \co  \Cat \rightleftarrows \SMCat \co \sfU$ of \eqref{eq:adj:cat-smcat}.
In particular, when $C=1$ is the terminal category,  the envelope of the operad of objects is $\env{\opdO_1}=\freesmc1$ which is equivalent to the category $\FinBij$ of finite sets and bijections.

\item 
The envelope $\env\Com$ is equivalent to $\Fin$, the category of finite sets.
The monoidal structure in the sum, which unit is the empty set.
The canonical morphism of operad $\opdO_1\to \Com$ induces the canonical inclusion $\FinBij\to \Fin$ between the envelopes.

\item 
The envelope $\env\Comnu$ is equivalent to $\FinSurj$, the category of finite sets and surjections.
The monoidal structure in the restriction of the sum along the inclusion $\FinSurj\to\Fin$.
The canonical morphism of operads $\Comnu\to \Com$ induces the canonical inclusion $\FinSurj\to \Fin$ between the envelopes.

\item 
The envelope $\env\Ezero$ is equivalent to $\FinInj$, the category of finite sets and injections.
The monoidal structure is the restriction of the sum along the inclusion $\FinInj\to\Fin$.
The canonical morphism of operads $\Ezero\to \Com$ induces the canonical inclusion $\FinInj\to \Fin$ between the envelopes.

\item 
The envelope $\env\As$ is equivalent to the category $\FinOrd$ whose objects are finite sets and morphisms are maps $A\to B$ enhanced with a total order on each of their fibers.
The monoidal structure in essentially the sum of finite sets.
The unit is the empty set.
The canonical morphism of operads $\As\to \Com$ induces the obvious functor $\FinOrd\to \Fin$ forgetting the order on fibers.
Since there exists a unique order on a singleton or the emptyset, there exists a canonical functor $\FinInj\to \FinOrd$.
It is easy to see that it is induced by the canonical morphism of operads $\Ezero\to \As$.

\item 
The envelope $\env\Asnu$ is equivalent to the category $\FinSurjOrd$ whose objects are finite sets and morphisms are surjective maps $A\to B$ enhanced with a total order on each of their fibers.
The monoidal structure is the restriction of the previous one along the inclusion $\FinSurjOrd\to\FinOrd$.
The canonical morphism of operads $\Asnu\to \Comnu$ induces the obvious functor $\FinSurjOrd\to \FinSurj$ forgetting the order on fibers.

\item The envelope of the operad $\AsMod$ is detailled in \cite[Definition~4.2.16]{LurieJ:higa} (where it is the subcategory of active morphisms).

\end{enumerate}	
\end{ex}

For the next results, we fix an operad $\opdP$ with set of colours $A$, and we consider $\monad\opdP \co \freerigA\to \freerigA$ the 2-rig monad of \cref{rmk:opd=monad}.
Recall the construction $\freesmcA_{\monad\opdP}$ from \cref{lem:monad-presheaves-monoidal}.
The following result provides alternative way to construct the monoidal envelope of $\Opd$.

\begin{lem}
\label{lem:equiv-Sigma=aS}
There exists an equivalence $\freesmcA_{\monad\opdP} = \envP$ of symmetric monoidal categories under $\freesmcA$.
\end{lem}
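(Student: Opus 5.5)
Both categories are built on the same objects, so I will compare them directly: same objects, then a natural bijection on hom-sets compatible with composition, and finally with the tensor product and the inclusion of $\freesmcA$. By construction (\cref{lem:monad-presheaves-monoidal} and the discussion before \cref{lem:monad-presheaves}), $\freesmcA_{\monad\opdP}$ is the full subcategory of $\freerigA^{\monad\opdP}$ on the free algebras $\monad\opdP(\vec a)$ for $\vec a\in\freesmcA$. Its objects are therefore those of $\freesmcA$, i.e.\ finite sequences $\symelt am$ of colours, which are also the objects of $\envP$. Its hom-sets are given by the coefficient matrix:
\[
\Hom{\freesmcA_{\monad\opdP}}{\vec a}{\vec a'} \;\cong\; \matrice{\monad\opdP}{\vec a}{\vec a'} \,,
\]
namely the value at $\vec a$ of the presheaf $\monad\opdP(\vec a')$.

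The first step is to compute this presheaf. The monad $\monad\opdP$ is a 2-rig morphism, so it is strong monoidal for Day convolution. It is determined by its symmetric sequence $\opdP\co\freesmcA\op\times A\to\Set$, so on a representable colour $a'$ we have $\monad\opdP(a')=\opdP(-;a')$. For $\vec a'=\angles{a'_1,\dots,a'_n}=a'_1\otimes\dots\otimes a'_n$ we then get
\[
\monad\opdP(\vec a') \cong \opdP(-;a'_1)\ast\cdots\ast\opdP(-;a'_n) \,,
\]
a Day convolution product. Since $\freesmcA$ is free symmetric monoidal on the set $A$, a convolution product evaluated at $\vec a$ is a coend over splittings of $\vec a$ into $n$ pieces. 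Up to the symmetric group actions, this coend is
\[
\coprod_{\phi\co\set m\to\set n}\ \prod_{j}\matrice\opdP{\angles{a_i}_{i\in\phi^{-1}(j)}}{a'_j} \,.
\]
This is exactly the hom-set~\eqref{eq:env} of $\envP$. The coend quotients by permutations that reorder within fibres, and these are absorbed using the identification of a subsequence with its induced order. This gives the bijection on hom-sets, natural in both variables.

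The second step is to check compatibility with composition and identities. Composition in $\freesmcA_{\monad\opdP}$ is Kleisli composition, which uses the multiplication $\mu\co\monad\opdP\monad\opdP\Rightarrow\monad\opdP$. Under \cref{rmk:opd=monad}, this multiplication is operadic composition. Transporting it through the formula above, composing $(\phi,p)$ with $(\psi,q)$ substitutes the $p_i$ into the $q_j$ fibrewise, indexed by $\psi\circ\phi$, and this is composition in $\envP$. Identities correspond because the unit $\eta$ picks out the identity operations $\id_a$.

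The third step concerns the monoidal structure and the embedding of $\freesmcA$. The monoidal structure on $\freesmcA_{\monad\opdP}$ is the restriction of convolution to free algebras, using $\monad\opdP(\vec a\otimes\vec b)\cong\monad\opdP(\vec a)\ast\monad\opdP(\vec b)$. On objects this is concatenation, and on maps it is disjoint union of the indexing functions, which matches the tensor in $\envP$. The functor from $\freesmcA$ sends a bijection to the pair consisting of that bijection and identity operations on both sides, so the equivalence lies under $\freesmcA$.

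An alternative that avoids most coherence checks is a universal-property argument. A strong monoidal functor $\freesmcA_{\monad\opdP}\to\moncatM$ extends cocontinuously to a 2-rig morphism $\freerigA^{\monad\opdP}\to\psh{\moncatM}$. This amounts to a monad-compatible map out of $\freerigA$, i.e.\ a $\monad\opdP$-algebra structure, i.e.\ an operad morphism $\opdP\to\End{\moncatM}$. However, this route needs care with representability, so I would use it only as a cross-check. The main obstacle is the coend computation in the first step: the symmetric group actions and the ordering of fibres must be handled so that the identification is natural and respects composition. I would do this by computing the convolution with $\FinBij$-indexed families rather than ordered sequences, and only afterwards choosing representatives.
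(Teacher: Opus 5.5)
Your proposal is correct and takes essentially the same route as the paper. Like the paper, you identify the objects of both categories with sequences of colours, compute $\Hom{\freesmcA_{\monad\opdP}}{\vec a}{\vec a'}$ as the value at $\vec a$ of $\monad\opdP(\vec a')$ using strong monoidality of $\monad\opdP$ and the Day convolution coend, and match the result with \eqref{eq:env}. The paper leaves both that final coend identification and the compatibility with composition and tensor to the reader, so your second and third steps just spell out what it omits.
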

\begin{proof}
By construction, $\freesmcA_{\monad\opdP}$ and $\envP$ have the same objects which are those of $\freesmcA$,
\ie~sequences $\symelt am$ of objects of~$\catA$.
For the hom-set of maps between two sequences
$\vec a = \symelt am$ and $\vec a' = \symelt{a'}n$
we have
\begin{align*}
\hom{\freesmcA_{\monad\opdP}}{\vec a}{\vec a'}
& = \hom{\freerigA^{\monad\opdP}}{\monad\opdP(\vec a)}{\monad\opdP(\vec a')}\\
& \cong \hom\freerigA{\vec a}{\monad\opdP(\vec a')} \\
& \cong \hom\freerigA{\vec a}{\monad\opdP(\angles{a'_1})\otimes\dots\otimes \monad\opdP(\angles{a'_n})} \\
& \cong \HOM\freerigA{\vec a}{\matrice\opdP-{a'_1} \otimes\dots\otimes \matrice\opdP-{a'_n}}\\
& \cong \Big(\matrice\opdP-{a'_1}\otimes\dots\otimes\matrice\opdP-{a'_n}\Big)(\vec a)\\
& = \int^{\vec a''_1, \ldots, \vec a''_n \in \freesmcA} \HOM\freesmcA{\vec a}{\bigotimes_{j =1, \ldots, n}  \vec a''_j} 
\times \prod_{j=1, \ldots, n}  \matrice\opdP{\vec a''_j}{a'_j}   \,,  
\end{align*}
where used that $\monad\opdP$ is symmetric strong monoidal and the definition of the convolution
monoidal structure. This can be shown to coincide with the hom-sets of $\envP$, as
defined in~\eqref{eq:env} via a calculation that is left to readers.
\end{proof} 

Putting together \cref{lem:equiv-Sigma=aS,lem:monad-presheaves-monoidal} we get the following result.

\begin{prop}
\label{prop:em-for-freerig}
With the previous notations, there exist equivalences of 2-rigs under $\freerigA$
\[
\freerigA^{\monad\opdP}
\ \simeq\
\psh{\freesmcA_{\monad\opdP}}
\ \simeq\ 
\opdrigP
\,.
\]
\end{prop}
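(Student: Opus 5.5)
The plan is to chain the two earlier lemmas and then check that the resulting equivalences are compatible with the structure maps from $\freerigA$. The monad $\monad\opdP$ of \cref{rmk:opd=monad} is a monad in $\FsetRig\subseteq\Rig$, so it is a 2-rig monad on the convolution 2-rig $\freerigA=\psh{\freesmcA}$ in the sense of \cref{thm:2rigmonad}. By \cref{thm:rig-em-complete}, its Eilenberg--Moore object in $\Rig$ is the category of algebras $\freerigA^{\monad\opdP}$ computed in $\CAT$. This 2-rig comes with the free-algebra 2-rig morphism $F\co\freerigA\to\freerigA^{\monad\opdP}$, which is the structure map "under $\freerigA$".

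For the first equivalence, I apply \cref{lem:monad-presheaves-monoidal} with $\moncatM=\freesmcA$ and $P=\monad\opdP$. This shows that $K\co\psh{\freesmcA_{\monad\opdP}}\to\freerigA^{\monad\opdP}$ is an equivalence that is symmetric strong monoidal. An equivalence of categories is automatically cocontinuous, so $K$ is an equivalence of 2-rigs. To see that it lies under $\freerigA$, I take as structure map on the source $i_!\co\psh{\freesmcA}\to\psh{\freesmcA_{\monad\opdP}}$, where $i\co\freesmcA\to\freesmcA_{\monad\opdP}$ is the canonical strong monoidal functor, so that $i_!$ is a 2-rig morphism. The proof of \cref{lem:monad-presheaves} already gives $K\circ i_!\cong F$, since both are cocontinuous and agree on representables. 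It remains to check that this isomorphism is monoidal. Both composites are left Kan extensions of strong monoidal functors $\freesmcA\to\freerigA^{\monad\opdP}$, and the isomorphism between those restrictions is monoidal. I would then invoke the universal property of Day convolution~\cite{DayB:clocf,ImG:unipcm}: restriction along the Yoneda embedding gives an equivalence between 2-rig morphisms out of $\psh{\freesmcA}$ and strong monoidal functors out of $\freesmcA$.

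For the second equivalence, \cref{lem:equiv-Sigma=aS} gives a symmetric monoidal equivalence $\freesmcA_{\monad\opdP}\simeq\envP$ under $\freesmcA$. Applying the pseudofunctor $\Psh\co\SMCat\to\Rig$ of \cref{ex:rig}\ref{ex:rig:convolution} turns it into an equivalence of 2-rigs $\psh{\freesmcA_{\monad\opdP}}\simeq\psh{\envP}=\opdrigP$. Pseudofunctoriality carries the triangle under $\freesmcA$ to a triangle under $\psh{\freesmcA}=\freerigA$. Here the structure map into $\opdrigP$ is the left Kan extension along the canonical functor $\freesmcA\to\envP$. That functor is $\env{-}$ applied to the inclusion of the discrete operad on $A$ into $\opdP$, using $\env{A}=\freesmcA$ from \cref{ex:env}\ref{ex:env:cat}.

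The main obstacle is \cref{lem:equiv-Sigma=aS} itself, which requires identifying the coend formula for the hom-sets of $\freesmcA_{\monad\opdP}$ with formula~\eqref{eq:env}. It also requires checking that composition and the tensor product agree. Since that lemma is stated earlier, I would take it as given and only make sure its equivalence is compatible with the maps from $\freesmcA$. Both of those maps are the identity on objects, so this compatibility is immediate. A secondary point is keeping track of the coherence isomorphisms under $\freerigA$. For this I would rely throughout on the universal property of Day convolution, so that every check reduces to one on representables.
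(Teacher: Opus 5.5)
Your proposal is correct and matches the paper's proof, which just combines \cref{lem:monad-presheaves-monoidal} for the first equivalence with \cref{lem:equiv-Sigma=aS} transported along $\Psh$ for the second; your extra bookkeeping of the structure maps from $\freerigA$, via the universal property of Day convolution, makes explicit what the paper leaves implicit. One small caveat: being the identity on objects does not by itself make the equivalence $\freesmcA_{\monad\opdP}\simeq\envP$ compatible with the maps from $\freesmcA$ on morphisms, but that compatibility is already part of the statement of \cref{lem:equiv-Sigma=aS}, so nothing is lost.
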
 

\begin{rmk}
\label{rmk:naturality}
The equivalences of \cref{lem:equiv-Sigma=aS,prop:em-for-freerig} can be shown to be functorial on the 1-category $\Opdone$ of \cref{rmk:1-cat-opd} but not on the bicategory $\Opd$ because of the explicit use of the set of objects of $\opdP$ (see \cref{rmk:set-of-colours}).
\end{rmk}

A morphism of operads $f:\opdP\to \opdQ$ induces a symmetric strong monoidal functor $\env f:\envP\to \envQ$.
We shall need the following lemma for some computations in \cref{ex:adj-induction-restriction}.

\begin{lem}
\label{lem:monoidal-Kan-ext}
Let $\rigR$ be a 2-rig.
The left Kan extension of a symmetric strong monoidal functor $g:\envP\to \rigR$
along $\env f:\envP\to \envQ$ has a canonical symmetric strong monoidal structure.
\end{lem}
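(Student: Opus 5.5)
The plan is to reduce the claim to the universal property of Day convolution (\cref{rmk:adj-smcat-rig}) together with the fact that $\Psh(\env f)$ is a 2-rig morphism. Write $h = \env f \co \envP\to\envQ$ and let $\hat g \co \opdrigP \to \rigR$ be the cocontinuous extension of $g$ along the Yoneda embedding. By the relative biadjunction $\Psh \dashv \sfU$ of \cref{rmk:adj-smcat-rig}, $\hat g$ is a 2-rig morphism, because $g$ is symmetric strong monoidal. Likewise $h_! \co \opdrigP\to\opdrigQ$, the left Kan extension along $h$ followed by Yoneda, is the image of $h$ under $\Psh \co \SMCat \to \Rig$, so it is a 2-rig morphism.

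Next, use the dual to restriction to build a right adjoint to $h_!$ in $\COC$. The functor $h_!$ has a right adjoint $h^*$, and it also has a right adjoint in $\Rig$-relevant terms. The key formula is
\[
\mathrm{Lan}_h g \;\cong\; \hat g \circ h_{\ast\ast},
\]
where the functor in the middle is restriction along the Yoneda embedding. I will avoid this and argue more directly. The left Kan extension is
\[
(\mathrm{Lan}_h g)(\vec b) \;=\; \int^{\vec a\in\envP} \Hom{\envQ}{h\vec a}{\vec b}\cdot g(\vec a),
\]
which can be rewritten as $\hat g\big(h^*\emb(\vec b)\big)$, where $h^*\emb(\vec b) = \Hom{\envQ}{h(-)}{\vec b}$ is a presheaf on $\envP$. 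So $\mathrm{Lan}_h g \cong \hat g\circ h^*\circ \emb_{\envQ}$. It then suffices to equip $h^*\circ\emb_{\envQ} \co \envQ \to \opdrigP$ with a symmetric strong monoidal structure, since $\hat g$ is already a 2-rig morphism.

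This last step is the main obstacle. A lax structure always exists, since $h^*$ is lax monoidal as the right adjoint of the strong monoidal $h_!$ (doctrinal adjunction). Strongness, however, needs the specific structure of $h = \env f$. Concretely, I need the map
\[
\Hom{\envQ}{h-}{\vec b}\otimes_{\mathrm{Day}}\Hom{\envQ}{h-}{\vec b'} \tto \Hom{\envQ}{h-}{\vec b\otimes \vec b'}
\]
to be invertible. I would verify this with formula~\eqref{eq:env}. A morphism $h\vec a\to\vec b\otimes\vec b'$ consists of a function $\phi$ to $\set n\sqcup\set{n'}$ together with operations. Such a $\phi$ splits the sequence $\vec a$ uniquely, up to the permutation action, as $\vec a_1\otimes\vec a_2$ with morphisms $h\vec a_1\to\vec b$ and $h\vec a_2\to\vec b'$. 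This is precisely the coend defining Day convolution, quotiented by the isomorphisms of $\envP$ that permute the inputs. In other words, $\envQ$ is "free on the fibres": every morphism into a tensor product decomposes uniquely up to the relevant isomorphisms. The unit case works the same way, since morphisms into the empty sequence come only from the empty sequence. Once this decomposition is checked, the coherence axioms follow from those of $\hat g$ and from the naturality of the splitting. Symmetry is inherited from the symmetry of $\envQ$.
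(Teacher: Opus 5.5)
Your argument is correct, but it is organised differently from the paper's. The paper works directly in $\rigR$. It shows that the comma category splits, $\env f\slice{\vec b}\simeq\prod_i\env f\slice{b_i}$. The colimit formula for $\mathrm{Lan}_f(g)(\vec b)$ over this product then becomes $\bigotimes_i\mathrm{Lan}_f(g)(b_i)$, because $\otimes$ is cocontinuous in each variable; coherence is left to the reader.

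You instead factor $\mathrm{Lan}_h g\cong\hat g\circ h^*\circ\emb_{\envQ}$. This reduces everything to the universal case $g=\emb_{\envP}$, i.e.\ to strong monoidality of the restriction $h^*$ on representables. The combinatorial input is the same in both routes: morphisms of $\envQ$ into a tensor product split along fibres. Your packaging buys two things:
\begin{itemize}
\item Coherence is automatic. Your comparison maps are those of the lax structure $h^*$ inherits by doctrinal adjunction, so only invertibility has to be checked.
\item Since $h^*$ is cocontinuous, you get as a by-product that $h^*$ is itself a 2-rig morphism. So $\Psh(\env f)$ is a left adjoint in $\Rig$, which recovers part of \cref{cor:opdrig=opdbim} without going through bimodules.
\end{itemize}
The paper's route is shorter and needs neither Day convolution coends nor doctrinal adjunction.

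Two things to tidy. First, the paragraph about a right adjoint ``in $\Rig$-relevant terms'' and the undefined $h_{\ast\ast}$ is garbled; since you abandon it, delete it.

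Second, and more substantively, your injectivity check undersells the Day coend. It runs over all of $\envP\times\envP$, so in a general element $(\alpha,\beta_1,\beta_2)$ the map $\alpha\co\vec a\to\vec a_1\otimes\vec a_2$ is an arbitrary morphism carrying operations of $\opdP$, not just a permutation. You must first factor $\alpha=(\alpha_1\otimes\alpha_2)\circ\sigma$ with $\sigma$ a permutation; this is the same fibre splitting, now in $\envP$. Then use dinaturality to absorb each $\alpha_i$ into $\beta_i$. Only after that does the quotient reduce to permutations.

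The cleanest way to handle this bookkeeping is the paper's comma-category equivalence itself. $h^*\emb(\vec b)$ is the colimit of representables over its category of elements $h\slice{\vec b}$, and $h\slice{\vec b}\simeq\prod_i h\slice{b_i}$. Since Day convolution is cocontinuous in each variable, this gives $h^*\emb(\vec b)\cong\bigotimes_i h^*\emb(b_i)$ at once.
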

\begin{proof}[Proof (sketch)]
For $\vec b=\symelt bn$, an object of $\env Q$, we denote by $\env f\slice{\vec b} \defeq \envP \times_{\envQ}\envQ/\vec b$ the comma category of $\env f$ at $\vec b$.
The value at $\vec b$ of the left Kan extension $\mathrm{Lan}_f(g)$ of $g$ along $\env f$ is given by the colimit of the canonical diagram $\env f\slice{\vec b} \to \envP \to \rigR$, sending a pair $(\vec a,\alpha:f(\vec a)\to \vec b)$ to $g(\vec a)$.
Using \cref{eq:env}, we leave to the reader to verify that there exists a canonical equivalence 
$\env f\slice{\vec b} \simeq \prod_{i=1}^n\env f\slice{b_i}$.
Under this equivalence, the left Kan extension is
\begin{align*}
\mathrm{Lan}_f(g)(\vec b) 
&=\colim_{(\vec a,\alpha:f(\vec a)\to \vec b)\in \env f\slice{\vec b}}g(\vec a) \\
&=\colim_{(\vec a_i,\alpha_i:f(\vec a_i)\to b_i)_i\in \prod_i\env f\slice{b_i} }g(\vec a_1)\otimes \dots\otimes g(\vec a_n) \\
&=\bigotimes_i \Big(\colim_{(\vec a_i,\alpha_i:f(\vec a_i)\to b_i)\in \env f\slice{b_i} }g(\vec a_i)\Big) &&\text{by commutation of $\otimes$ with colimits in $\rigR$}\\
&= \mathrm{Lan}_f(g)(b_1)\otimes\dots\otimes \mathrm{Lan}_f(g)(b_n)\,.
\end{align*}
We leave also to the reader the proof that these canonical isomorphisms define a symmetric strong monoidal structure on $\mathrm{Lan}_f(g)$.
\end{proof}

\subsection{Operadic 2-rigs} 
\label{sec:opd-2rigs}

Consider the following pseudofunctor, sending an operad to the convolution 2-rig of its envelope:
\[
\begin{tikzcd}
\Opd \ar[r, "\Env"] & \SMCat \ar[r, "\Psh"] & \Rig \,.
\end{tikzcd}
\]

\begin{defn}
A 2-rig is said to be \emph{operadic} if it is equivalent to one of the form $\opdrigP$ for some operad~$\opdP$.
\end{defn}

\begin{ex}
\label{ex:opdrig}
The monoidal envelopes of \cref{ex:env} give the following examples of operadic 2-rigs (whose products are obtained by Day convolution):
\begin{enumerate}[label=(\alph*)]
\item When $\opdP=C$ is a category, 
its operadic 2-rig is simply $\opdrig{\opdO_C}\simeq\freerigC$ the free 2-rig on $C$.
In particular, when $C=1$, the envelope is the free 2-rig on one generator: $\opdrig{\opdO_1} \simeq \freerig1\simeq\psh\FinBij$.
\item $\opdrig\Com \simeq \psh\Fin$.
\item $\opdrig\Comnu \simeq \psh\FinSurj$.
\item $\opdrig\Ezero \simeq \psh\FinInj$.
\item $\opdrig\As \simeq \psh\FinOrd$.
\item $\opdrig\Asnu \simeq \psh\FinSurjOrd$.
\end{enumerate}	
\end{ex}

We write $\OpdRig$ for the full-bicategory of $\Rig$ spanned by operadic 2-rigs.
By definition, this bicategory fit into the image factorisation:
\[
\begin{tikzcd} 
\Opd \ar[r, two heads]
&  \OpdRig  \ar[r, hook]
& \Rig
\end{tikzcd}
\]

From the inclusions $\Set\to \Gpd\to \Cat\to \Opd$ and in~\eqref{eq:cat-smcat-2rig} we get also a diagram of inclusions:
\[
\begin{tikzcd}
\FsetRig \ar[r,hook]
&\FgpdRig \ar[r,hook]
&\FRig \ar[r,hook]
& \OpdRig \ar[r,hook]
& \CRig
\,.
\end{tikzcd}
\]

Recall that $\Rig$ and all its full sub-bicategories are tame bicategories (\cref{cor:RIG-is-tame,rmk:sub-tame=tame}).
By \cref{prop:EMC-embedding2}, the Eilenberg--Moore--Kleisli completions 
$\EMK(\FsetRig)$ and $\EMK(\FRig)$ 
are full sub-bicategories of $\Rig$ and we get a diagram of inclusions:
\[
\begin{tikzcd}
\FsetRig \ar[d,hook] \ar[r,hook]
&\FRig \ar[d,hook] \ar[rd,hook, bend left=20] 
\\
\EMK(\FsetRig)\ar[r,hook]
&\EMK(\FRig)
& \OpdRig
\,.
\end{tikzcd}
\]
We will show in \cref{thm:opdrig=em-frig} that the bicategories of the second row are all the same.
We need a lemma first.

\begin{lem}
\label{lem:opdrig=em-frig}
There exists an inclusion $\FRig \subseteq \EMK(\FsetRig)$.
\end{lem}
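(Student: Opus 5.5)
Let $\catC$ be a small category with set of objects $\catA$. The plan is to show that the free 2-rig $\freerigC$ is equivalent to the Eilenberg--Moore object of a 2-rig monad on $\freerigA$. By \cref{prop:EMC-embedding2}, applied to the inclusion $\FsetRig\subseteq\Rig$ (which is fully faithful into an Eilenberg--Moore--Kleisli complete tame bicategory by \cref{thm:rig-em-complete}), $\EMK(\FsetRig)$ is the full sub-bicategory of $\Rig$ spanned by such Eilenberg--Moore objects. Since both sides are full sub-bicategories of $\Rig$ closed under equivalence, this is enough to obtain the inclusion on objects, and hence on all hom-categories.

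\textbf{Choice of the monad.} Let $i\co\catA\to\catC$ be the identity-on-objects inclusion, where $\catA$ is viewed as a discrete category. Regarding both as operads with only unary operations, $i$ is an essentially surjective morphism of operads. There are two equivalent ways to produce the monad.

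\begin{itemize}
\item[(a)] By \cref{rmk:opd=monad}, the category $\catC$, viewed as the operad $\opdO_\catC$ with colours $\catA$, corresponds to a monad $\monad{\opdO_\catC}$ on $\freerigA$ in $\FsetRig$. Its symmetric sequence is $\matrice{\opdO_\catC}{\vec a}{a}$, which equals $\hom\catC{a'}a$ when $\vec a=\angles{a'}$ and is empty otherwise.
\item[(b)] Alternatively, the functor $\freesmc i\co\freesmcA\to\freesmcC$ induces the adjunction $\freesmc i_!\dashv\freesmc i^*$. Here $\freesmc i_!$ is a 2-rig morphism because Day convolution is the left Kan extension of the tensor product, and $\freesmc i^*$ is also a 2-rig morphism since it is the restriction along the essentially surjective $\freesmc i$. \cref{lem:surj-small=monadic} then shows that $\freerigC\simeq\freerigA^{P}$ with $P=\freesmc i^*\freesmc i_!$.
\end{itemize}

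I would use route (a), since \cref{prop:em-for-freerig} does all the work. It gives an equivalence of 2-rigs
\[
\freerigA^{\monad{\opdO_\catC}}\simeq\opdrig{\opdO_\catC}\simeq\psh{\env{\opdO_\catC}}.
\]
By \cref{ex:env}\ref{ex:env:cat}, $\env{\opdO_\catC}\simeq\freesmcC$, so the right-hand side is $\freerigC$. Thus $\freerigC$ is the Eilenberg--Moore (equivalently Kleisli, by \cref{lem:em=k}) object of a monad on an object of $\FsetRig$.

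\textbf{Main obstacle.} The key point is checking that the monad $\monad{\opdO_\catC}$ is a monad in $\FsetRig$, i.e.\ a 2-rig monad, so that \cref{prop:EMC-embedding2} applies. This is exactly the content of \cref{rmk:opd=monad} together with \cref{prop:freerig-biequivalence}, which identifies monads in $\SetSym$ with 2-rig monads on $\freerigA$. A second point needs care: the equivalence $\freerigA^{\monad{\opdO_\catC}}\simeq\freerigC$ must be an equivalence of 2-rigs and not merely of categories. This is guaranteed by \cref{lem:monad-presheaves-monoidal}, which \cref{prop:em-for-freerig} relies on.
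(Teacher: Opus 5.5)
Your proof is correct, but it takes a different route from the paper's.

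\textbf{The paper's route.} The paper's argument is purely formal. The identity-on-objects functor $i\co\catA\to\catC$ makes $\psh\catC$ the Eilenberg--Moore, hence Kleisli, object of the cocontinuous monad $i^*i_!$ on $\psh\catA$ in $\Pres$ (\cref{lem:surj-small=monadic,lem:monad-presheaves}). The left biadjoint $\Sym\co\Pres\to\Rig$ preserves Kleisli objects, since they are lax colimits. Hence $\sym{\psh\catC}\simeq\freerigC$ is the Kleisli object of the 2-rig monad $\sym{i^*i_!}$ on $\sym{\psh\catA}\simeq\freerigA$. Tameness of $\Rig$ (\cref{lem:em=k}) then turns this into an Eilenberg--Moore object.

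\textbf{Your route.} Your route (a) instead factors the inclusion as $\FRig\subseteq\OpdRig\subseteq\EMK(\FsetRig)$. You treat $\catC$ as the unary operad $\opdO_\catC$ and invoke \cref{prop:em-for-freerig} together with $\env{\opdO_\catC}\simeq\freesmcC$. The monad is in fact the same in both proofs: its symmetric sequence is $\hom\catC{a'}a$, concentrated in arity one, with multiplication given by composition in $\catC$. There is no circularity, since \cref{prop:em-for-freerig} does not depend on this lemma.

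\textbf{Comparison.} The paper's version gets the 2-rig structure on the Eilenberg--Moore object, and the monoidality of the comparison, for free from the biadjunction $\Sym\dashv\sfU$, with no Day-convolution computation. The same Kleisli-preservation pattern is what the paper uses again in \cref{lem:V-EMK}, with $\rigV\otimes-$ in place of $\Sym$. Yours is shorter given what the paper already contains, and it makes the lemma a by-product of the proposition that drives \cref{thm:opdrig=em-frig}. The cost is that it rests on the more computational \cref{lem:monad-presheaves-monoidal,lem:equiv-Sigma=aS}, whose hom-set identification the paper partly leaves to the reader.

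\textbf{An aside on route (b).} You do not use route (b), but the reason you give there for $\freesmc i^*$ being a 2-rig morphism is not valid. Restriction along a strong monoidal functor is in general only lax monoidal for Day convolution, even when that functor is essentially surjective. It does happen to be a 2-rig morphism here, because $\freesmc i^*\simeq\sym{i^*}$. That is exactly why the paper obtains this adjunction by applying $\Sym$ to $i_!\dashv i^*$ rather than working with $\freesmc i$ directly.
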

\begin{proof}
Let $\catC$ be a small category with set of objects $\catA$.
Then by \cref{lem:monad-presheaves}, the surjection $i \co \catA\to \catC$ induces a monadic adjunction $i_! \co \pshA\rightleftarrows \pshC \co  i^*$ in $\Pres$, and a cocontinuous monad $M=i^*i_!$ on $\pshA$.

Taking the image by the pseudofunctor $\Sym \co \Pres\to \Rig$, we get a 2-rig monad $M'$ on 
\[
\sym\pshA \simeq \freerigA \mathrlap{.}
\]
Recall also from \cref{lem:monad-presheaves} that $i_! \co \psh{A_0}\to \pshA$ is the Kleisli object of the monad $M$ in $\Pres$.
Since $\Sym$ is a left biadjoint, it preserves Kleisli objects.
Thus, the Kleisli object $\freerigA\to \freerigA_{M'}$ is $\sym{i_!} \co \sym\pshA \to \sym\pshC = \freerigC$. 
Since $\Rig$ is tame, the Eilenberg--Moore object of $M'$ is given by the right adjoint $\freerigC\to \sym\pshA$ to $\sym{i_!}$.

This proves that, for every small category $\catC$, the 2-rig $\freerigC$ belongs to $\EMK(\FsetRig)$.
\end{proof}

The following statement provides a universal property for the category of operadic rigs.

\begin{thm}
\label{thm:opdrig=em-frig}
The following equalities of sub-bicategories of $\Rig$ hold
\[
\OpdRig
\ =\ 
\EMK(\FsetRig)
\ =\ 
\EMK(\FgpdRig)
\ =\ 
\EMK(\FRig)\,.
\]
\end{thm}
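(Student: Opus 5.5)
We prove the chain of equalities by establishing two inclusions and squeezing. First, the inclusions $\FsetRig\subseteq\FgpdRig\subseteq\FRig$ of full sub-bicategories of $\Rig$ induce, by \cref{lem:EMC-embedding} and \cref{prop:EMC-embedding2}, inclusions
\[
\EMK(\FsetRig)\subseteq\EMK(\FgpdRig)\subseteq\EMK(\FRig)\,.
\]
Here each $\EMK(-)$ is identified with the full sub-bicategory of $\Rig$ spanned by Eilenberg--Moore--Kleisli objects of monads on objects of the given sub-bicategory; this uses that $\Rig$ is Eilenberg--Moore--Kleisli complete (\cref{thm:rig-em-complete}). It then suffices to show $\EMK(\FRig)\subseteq\OpdRig$ and $\OpdRig\subseteq\EMK(\FsetRig)$.

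For $\OpdRig\subseteq\EMK(\FsetRig)$, let $\opdP$ be an operad with set of colours $A$. By \cref{rmk:opd=monad}, $\opdP$ is a monad $\monad\opdP$ on $\freerigA$ in $\FsetRig$. By \cref{prop:em-for-freerig}, its Eilenberg--Moore object in $\Rig$ is $\freerigA^{\monad\opdP}\simeq\opdrigP$. Since $\EMK(\FsetRig)$ is closed under equivalence in $\Rig$, every operadic 2-rig lies in $\EMK(\FsetRig)$.

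For $\EMK(\FRig)\subseteq\OpdRig$, I would first reduce to $\EMK(\FsetRig)$. By \cref{lem:opdrig=em-frig}, $\FRig\subseteq\EMK(\FsetRig)$. Since $\EMK(\FsetRig)$ is closed under Eilenberg--Moore objects in $\Rig$, every Eilenberg--Moore object of a monad on a free 2-rig already lies in $\EMK(\FsetRig)$. Hence $\EMK(\FRig)\subseteq\EMK(\EMK(\FsetRig))=\EMK(\FsetRig)$, where idempotence of the completion follows from \cref{prop:EMC-embedding2} applied with $\ccatD=\Rig$.

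So the remaining task is to show that every object of $\EMK(\FsetRig)$ is operadic. Such an object is $\freerigA^{M}$ for a set $A$ and a monad $M$ on $\freerigA$ in $\Rig$, and since $\FsetRig$ is full in $\Rig$ this is the same as a monad in $\FsetRig\simeq\SetSym$. By \cref{rmk:opd=monad}, $M\cong\monad\opdP$ for a unique operad $\opdP$ with colours $A$. Then \cref{prop:em-for-freerig} gives $\freerigA^M\simeq\opdrigP$, which is operadic.

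The main obstacle is the careful use of \cref{rmk:opd=monad}: a monad in $\SetSym$, up to the biequivalence of \cref{prop:freerig-biequivalence}, must correspond exactly to an operad. Transporting a monad along a biequivalence yields a monad, so this is fine, but one has to check that 2-rig monads on $\freerigA$ match monoids for the substitution product on symmetric sequences. The other delicate point is justifying the idempotence $\EMK(\EMK(\FsetRig))=\EMK(\FsetRig)$ inside $\Rig$. If that is problematic, I would instead use the explicit construction of \cref{lem:opdrig=em-frig}: present a monad on $\freerigC$ as a monad on $\freerigA$ with $A=\mathrm{Ob}\,C$, by composing with the adjunction $\freerigA\rightleftarrows\freerigC$, whose right adjoint is monadic.
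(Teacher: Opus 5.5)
Your proposal is correct and is essentially the paper's proof, just arranged as a squeeze. The key steps are exactly the paper's: $\EMK(\FRig)\subseteq\EMK(\FsetRig)$ from \cref{lem:opdrig=em-frig} together with idempotence of the completion inside $\Rig$ via \cref{prop:EMC-embedding2}, and $\OpdRig=\EMK(\FsetRig)$ from \cref{rmk:opd=monad} and \cref{prop:em-for-freerig}. Your main line already suffices, because the full inclusion of $\EMK(\FsetRig)$ into $\Rig$ is tame and hence preserves the Eilenberg--Moore objects that exist in the completion, so the fallback is unnecessary; as stated, it would in any case need an extra argument that the composite of the two monadic right adjoints $\freerigC^M\to\freerigC\to\freerigA$ is again monadic.
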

\begin{proof}
We show the equality $\EMK(\FsetRig)=\EMK(\FRig)$ first.
The inclusion $\FsetRig\subseteq \FRig$ induces a fully faithful bifunctor 
$\EMK(\FsetRig)\subseteq \EMK(\FRig)$ by \cref{lem:EMC-embedding}.
The converse equality follows from \cref{lem:opdrig=em-frig,prop:EMC-embedding2}.
The equality with $\EMK(\FgpdRig)$ follows since $\FsetRig\subseteq\FgpdRig\subseteq\FRig$.

We will show that $\OpdRig=\EMK(\FsetRig)$ by showing that they are sub-bicategories spanned by the same objects.
The objects of $\OpdRig$ are 2-rigs of the type $\opdrigP$ for an operad $\opdP$.
The objects of $\EMK(\FsetRig)$ are monads $\opdQ \co \freerigA\to \freerigA$ in $\FsetRig$.
By \cref{rmk:opd=monad}, these are the same thing as operads $\opdQ$ coloured by the set $A$.
By construction of the embedding $\EMK(\FRig)\subseteq\Rig$ in \cref{prop:EMC-embedding2}, such a monad is send to the Eilenberg--Moore object $\freerigA^\opdQ$.
The equality $\OpdRig=\EMK(\FsetRig)$ is then equivalent to show that the 2-rigs of the type $\opdrigP$ and $\freerigA^\opdQ$ coincide.
But this is the statement of \cref{prop:em-for-freerig}.
\end{proof}

\begin{rmk} 
\label{rmk:Getzler}	
\Cref{thm:opdrig=em-frig} is related to Getzler's characterization of operads as 
regular patterns~\cite{Getzler2009}. The equality $\OpdRig=\EMK(\FgpdRig)$  is closely related to the approach of Kaufmann and Ward~\cite{Kaufmann-Ward} since one can characterize the objects of $\EMK(\FgpdRig)$ as the convolution 2-rigs over a Feynman category.
This characterization of operadic 2-rigs is generalized to the higher setting by Zetto \cite[Theorems 4.6]{Zetto}.
\end{rmk}

The next remark answers a question by Richard Garner.

\begin{rmk} 
\label{rmk:partial-monad}
The bicategory $\OpdBim$ can be regarded as a Kleisli bicategory for a relative pseudomonad in the sense of~\cite{FioreM:relpkbs}. Consider the diagram
\[
\begin{tikzcd}
& \Rig \ar[d,"\sfU"]
\\
\SMCat \ar[ur, bend left = 2, "\Psh"] \ar[r, hook] \ar[d, shift left = 2, "\END"]
& \SMCAT \ar[d, shift left = 2, "\END"]
\\
\Opd \ar[r, hook] \ar[u, shift left = 2, "\Env"] \ar[u, phantom, description, "\scriptstyle\dashv"] &  \ar[u, shift left = 2, "\Env"]  \OPD \,. \ar[u, phantom, description, "\scriptstyle\dashv"] 
\end{tikzcd}
\]

Here, we write $\Opd$ and $\OPD$ for the bicategories of small and large
operads, respectively. 
Similarly, $\SMCat$ and $\SMCAT$
denote the 2-categories of small and large
symmetric monoidal categories, respectively. 
Finally, $\RIG$ is the category of large 2-rigs.
The top triangle is the relative biadjunction \eqref{eq:adj-smcat-rig}. 
In the bottom square, the vertical maps form biadjunctions and the horizontal maps are inclusions. By composition, we obtain a relative biadjunction
\begin{equation}
\label{eq:relative-adj}
\begin{tikzcd}
 & \Rig \ar[d, "\END \circ \sfU"] \\
\Opd \ar[r, hook]  \ar[ur, bend left = 2, "\Psh \circ \Env"] & \OPD 
\end{tikzcd}
\end{equation}
which determines a relative pseudomonad $\END \circ \Psh \circ \Env \co \Opd \to \OPD$, whose Kleisli bicategory can be readily identified with $\OpdBim$,
as, for operads $\opdP$ and $\opdQ$, we have equivalences 
\begin{align*} 
\hom\OpdBim\opdP\opdQ & \simeq \Hom\Rig\opdrigP\opdrigQ\\
 & \simeq \Hom\SMCAT\envP\opdrigQ \\
 & \simeq \Hom\OPD\opdP{\End{\opdrigQ}}\,.
\end{align*}
\end{rmk}

\subsection{Operadic bimodules}

Our next result is the identification of the bicategory $\OpdRig$ with the bicategory $\OpdBim$ of operads and bimodules.
Recall from \cite[4.4]{GambinoN:opebaf} that this bicategory is defined as 
\[
\OpdBim
\ \defeq\ 
\Bim(\SetSym)\,.
\]

\begin{rmk}
\label{rmk:em-setsym=em-catsym}
Recall the biequivalences
$\SetSym \simeq \FsetRig$ and $\CatSym \simeq \FRig$ of \cref{prop:freerig-biequivalence} 
and that, for a tame bicategory $\ccatC$, $\EMK(\ccatC) \simeq \Bim(\ccatC)$ (\cref{sec:monad-tame}).
Then the biequivalence $\EMK(\FRig)\simeq \EMK(\FsetRig)$ of \cref{thm:opdrig=em-frig} recovers the equivalence $\Bim(\CatSym)\simeq \Bim(\SetSym)$ of \cite[Theorem 5.4.5]{GambinoN:opebaf}.
\end{rmk}

The next result follows from \cref{thm:opdrig=em-frig,rmk:em-setsym=em-catsym}.

\begin{thm}
\label{thm:opdrig=opdbim}
There exist biequivalences
\[
\OpdBim
\ =\ 
\Bim(\SetSym)
\ \simeq \ 
\Bim(\CatSym)
\ \simeq \ 
\OpdRig
\,.
\]
\end{thm}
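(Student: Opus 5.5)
The plan is to chain three biequivalences, each supplied by an earlier result, and then check that they are compatible with the identification of objects.

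\textbf{Step 1.} The first equality $\OpdBim = \Bim(\SetSym)$ is the definition recalled from \cite[4.4]{GambinoN:opebaf}, so nothing needs proving there.

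\textbf{Step 2.} Both $\SetSym$ and $\CatSym$ are tame, being biequivalent to the full sub-bicategories $\FsetRig$ and $\FRig$ of the tame bicategory $\Rig$ (\cref{prop:freerig-biequivalence,cor:RIG-is-tame,rmk:sub-tame=tame}). Tameness is invariant under biequivalence, since it only concerns hom-categories and composition. By \cref{prop:emk=bim}, for a tame $\ccatC$ we have $\Bim(\ccatC) \simeq \EMK(\ccatC)$. The universal property of $\EMK$ then shows that a biequivalence $\ccatC \simeq \ccatD$ of tame bicategories induces a biequivalence $\EMK(\ccatC) \simeq \EMK(\ccatD)$; alternatively one can use the explicit bimodule description together with \cref{lem:EMC-embedding}, applied in both directions. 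This gives
\[
\Bim(\SetSym) \simeq \EMK(\FsetRig)
\quad\text{and}\quad
\Bim(\CatSym) \simeq \EMK(\FRig).
\]

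\textbf{Step 3.} By \cref{thm:opdrig=em-frig}, $\EMK(\FsetRig) = \EMK(\FRig) = \OpdRig$ as sub-bicategories of $\Rig$. These equalities use the realisation of $\EMK$ as a full sub-bicategory of $\Rig$ given by \cref{prop:EMC-embedding2}, which applies because $\Rig$ is Eilenberg--Moore--Kleisli complete (\cref{thm:rig-em-complete}). Composing the biequivalences of Step 2 with these equalities yields $\Bim(\SetSym) \simeq \Bim(\CatSym) \simeq \OpdRig$, as in \cref{rmk:em-setsym=em-catsym}.

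The only point needing care, and the one I expect to be the main obstacle, is making sure the abstract $\EMK(\FsetRig)$ produced by \cref{prop:emk=bim} coincides with the concrete full sub-bicategory of $\Rig$ used in \cref{thm:opdrig=em-frig}. Both satisfy the same universal property among tame bicategories, so they agree up to biequivalence. Concretely, the comparison $\Bim(\FsetRig) \to \Rig$ sends an operad $\opdP$, viewed as a monad $\monad\opdP$ (\cref{rmk:opd=monad}), to $\freerigA^{\monad\opdP} \simeq \opdrigP$ by \cref{prop:em-for-freerig}. It sends a bimodule to the induced 2-rig morphism via \cref{lem:bim=k2em}, using that Kleisli and Eilenberg--Moore objects coincide (\cref{lem:em=k}). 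Full faithfulness of this comparison on hom-categories is exactly \cref{lem:bim=k2em} combined with the fact that $\FsetRig \subseteq \Rig$ is fully faithful, and essential surjectivity onto $\OpdRig$ is \cref{prop:em-for-freerig}.
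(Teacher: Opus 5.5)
Your proposal is correct and follows essentially the same route as the paper. The paper deduces the theorem directly from \cref{thm:opdrig=em-frig} together with \cref{rmk:em-setsym=em-catsym}, that is, by transporting $\Bim(-)\simeq\EMK(-)$ along $\SetSym\simeq\FsetRig$ and $\CatSym\simeq\FRig$. Your concrete description of the comparison, sending operads to $\freerigA^{\monad\opdP}\simeq\opdrigP$ and bimodules to 2-rig morphisms via \cref{lem:bim=k2em} and \cref{lem:em=k}, is exactly the one given in \cref{rmk:opdrig=opdbim-2}.
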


\begin{rmk}
\label{rmk:opdrig=opdbim-2}
The resulting biequivalence $\Phi \co \OpdBim\to \OpdRig$ can be described explicitly as follows.
By the universal property of $\OpdBim$ and the fact that $\Rig$ is tame and 
Eilenberg--Moore--Kleisli complete, the tame pseudofunctor 
\[
\begin{tikzcd}[column sep = large]
\SetSym \ar[r, "\Psh \circ \mathsf{S}"] & \FsetRig \ar[r, hook] &  \Rig
\end{tikzcd}
\]
has an extension to a tame pseudofunctor $\Freerig^\dagger \co \OpdBim=\Bim(\SetSym)=\EMK(\SetSym) \to \Rig$ preserving Eilenberg--Moore--Kleisli objects, as in the diagram
\[
\begin{tikzcd}
\SetSym \ar[dr, bend right = 20, "\Freerig"'] \ar[r] & \OpdBim \ar[d, "\Freerig^\dagger"] \\
& \Rig \,. 
\end{tikzcd}
\]
The pseudofunctor $\Freerig^\dagger$ sends an operad $\opdP \co \freesmcA\op\times A\to \Set$ to the Eilenberg--Moore object of the associated monad $\monad\opdP:\freerigA\to \freerigA$.
By \cref{prop:em-for-freerig}, we have $\Freerig^\dagger(\opdP) = \freerigA^{\monad\opdP}=\opdrigP$.
At the level of morphisms, the functor $\Phi$ is given by composing 
the natural equivalence of \cref{lem:bim=k2em}
\[
\hom\OpdBim{\opdP}{\opdQ}
\ =\ 
\hom\Rig{\freerigA_{\monad\opdP}}{\freerigB^{\monad\opdQ}}\,,
\]
with the natural equivalence $\freerigA_{\monad\opdP} = \freerigA^{\monad\opdP}$ coming from the tameness of $\Rig$ (\cref{lem:em=k}).
The compatibility with composition is a long but safe computation.
\end{rmk}

\begin{cor}
\label{cor:opdrig=opdbim}
The functor $\Opd\to \OpdRig$ sends a morphism of operads to a left adjoint morphism of 2-rigs.
\end{cor}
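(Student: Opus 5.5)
The 2-rig morphism $\Psh(\env f) \co \opdrigP \to \opdrigQ$ is the left Kan extension $\env f_!$ along $\env f \co \envP \to \envQ$. I will show that it has a right adjoint in $\Rig$ itself, and not merely in $\CAT$. Write $f \co \opdP \to \opdQ$ for a morphism of operads. The natural candidate for the right adjoint is restriction $\env f^* \co \opdrigQ \to \opdrigP$.

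\textbf{Step 1: the underlying adjunction.} In $\CAT$ there is the usual adjunction $\env f_! \dashv \env f^*$. The functor $\env f^*$ is cocontinuous, since colimits of presheaves are computed pointwise. So the adjunction lives in $\Pres$.

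\textbf{Step 2: $\env f^*$ is a 2-rig morphism.} A priori $\env f^*$ is only lax monoidal, because it is right adjoint to a strong monoidal functor (doctrinal adjunction). The remaining task is to show that the lax structure maps
\[
\env f^*X \otimes \env f^*Y \to \env f^*(X\otimes Y)
\]
are invertible. For this I use the bimodule description in \cref{rmk:opdrig=opdbim-2}. Under $\Phi$, the morphism $\opdrigP\to\opdrigQ$ induced by $f$ corresponds to the bimodule $\opdQ\circ f$, regarded as a $(\opdQ,\opdP)$-bimodule, and it has a companion $(\opdP,\opdQ)$-bimodule $f^*\opdQ$. In $\Bim(\SetSym)$ these form an adjoint pair: the unit and counit are induced by the action maps, as for restriction and induction of ring bimodules. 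Since $\Phi$ is a biequivalence onto the full sub-bicategory $\OpdRig\subseteq\Rig$, this adjunction is sent to an adjunction inside $\Rig$, so the right adjoint is automatically a 2-rig morphism. By uniqueness of adjoints in $\CAT$, this right adjoint agrees with $\env f^*$.

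\textbf{Alternative route for Step 2.} One can compare convolution directly. Let $\vec a$ be an object of $\envP$. By the formula \eqref{eq:env}, maps out of $\env f(\vec a)$ split over the decomposition of $\vec a$, and this gives
\[
(X\otimes Y)(\env f\vec a) \cong \int^{\vec a',\vec a''} \hom{\envP}{\vec a}{\vec a'\otimes\vec a''}\times X(\env f\vec a')\times Y(\env f\vec a'').
\]
This is the same splitting-of-slices argument as in the proof of \cref{lem:monoidal-Kan-ext}, applied now to the coslice categories of $\env f$. For the unit, one checks that $\envQ(\env f\vec a, \angles{})$ agrees with $\envP(\vec a,\angles{})$; both are singletons or empty according to the nullary operations, and this is again read off from \eqref{eq:env}.

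\textbf{Step 3: the adjunction lies in $\Rig$.} The unit and counit are monoidal transformations by doctrinal adjunction. Hence $\env f_! \dashv \env f^*$ is an adjunction in $\Rig$.

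\textbf{Main obstacle.} The hard step is the strong monoidality of $\env f^*$. In the direct computation, the issue is that an operation $f(\vec a)\to b$ in $\opdQ$ does not factor through the image of $f$. So the required isomorphism on the convolution coends must be extracted carefully from the factorisation
\[
\Hom{\envQ}{\env f\vec a}{\vec b} \cong \coprod_{\phi}\prod_j \matrice\opdQ{\dots}{b_j}.
\]
For this reason I would present the bimodule argument, which avoids these calculations entirely, as the main proof.
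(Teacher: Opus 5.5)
Your main argument is exactly the paper's proof: you transport the bimodule adjunction $f_\circ\dashv f^\circ$ in $\OpdBim$ (your $\opdQ\circ f$ and $f^*\opdQ$) along $\Phi$ into $\Rig$, and, like the paper, you take for granted that $\Phi(f_\circ)$ is the 2-rig morphism $\Psh(\env f)$ induced by $f$ (\cf \cref{rmk:opdrig=opdbim}). Your alternative direct check, via the splitting $\envQ(\vec c,\vec b'\otimes\vec b'')\cong\coprod_{S}\envQ(\vec c_S,\vec b')\times\envQ(\vec c_{S^c},\vec b'')$ and co-Yoneda, is also sound and has the merit of not needing that identification; one small correction is that for the unit, $\envQ(\vec c,\angles{})$ is a singleton exactly when $\vec c$ is empty, and nullary operations play no role.
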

\begin{proof}

Recall from \cite[Lemma 4.5.3]{GambinoN:opebaf}	that a morphism of operads $f \co (A,\opdP)\to (B,\opdQ)$ gives rise to an adjunction $f_\circ\dashv f^\circ$ in $\OpdBim$:
\begin{align*}
f_\circ \co \freesmcB\op \times A &\tto \Set
\\
(\symelt bn,a)	&\mto
\matrice\opdQ{b_1,\dots,b_n}{f(a)}
\\
f^\circ \co \freesmcA\op \times B &\tto
\Set
\\
(\symelt am,b)	&\mto \matrice\opdQ{f(a_1),\dots,f(a_m)}b\,.
\end{align*}
equipped with the obvious actions of $\opdP$ and $\opdQ$.
The construction of $F_\circ$ and $F^\circ$ from $F$ defines pseudofunctors 
$(-)_\circ \co \Opd \to \OpdBim$ and $(-)^\circ \co \Opd\op \to \OpdBim$.
Composing with the pseudofunctor $\Phi \co \OpdBim\to \OpdRig$ of \cref{rmk:opdrig=opdbim-2}, the bimodules $f_\circ$ and $f^\circ$ define respectively adjoint 2-rig morphisms 
\[
\begin{tikzcd}
\rigmor{f_\circ} \co \opdrigP
\ar[r, shift left=2]
\ar[r, phantom, description, "\scriptstyle{\vdash}" rotate=90]
&
\opdrigQ \co \rigmor{f^\circ} 
\ar[l, shift left=2]\,. 
\end{tikzcd}\qedhere
\]
\end{proof}

In particular when $\opdP=B$, the set of colors of $\opdQ$, and when $f$ is the canonical morphism $i \co B\to \opdQ$, we get an adjunction 
\begin{equation}
\label{eq:adj-colours}
\begin{tikzcd}
\rigmor{i_\circ} \co \freerigB
\ar[r, shift left=2]
\ar[r, phantom, description, "\scriptstyle{\vdash}" rotate=90]
&
\opdrigQ \co \rigmor{i^\circ} \,.
\ar[l, shift left=2]
\end{tikzcd}
\end{equation}

\begin{rmk}
\label{rmk:opdrig=opdbim}
One can show that the biequivalence $\Opd\Bim \simeq \OpdRig$  of \cref{thm:opdrig=em-frig} can be enhanced into a commutative square of pseudofunctors
\[
\begin{tikzcd}
\Opd \ar[d,two heads,"(-)_\circ"'] \ar[rr, two heads]
&& \OpdRig\ar[d, hook]
\\
\OpdBim \ar[r,"\simeq"]
&
\EMK(\FsetRig) \ar[r,hook]
&\Rig\,.
\end{tikzcd}
\]
where the functor $\Opd\to \OpdBim$ on the left is that of \cref{cor:opdrig=opdbim}.
The equivalence of commutation of the square is given on objects by \cref{prop:em-for-freerig}, 
its funtoriality with respect to morphismes of operads can be deduced from \cref{rmk:naturality},
but the naturality with respect to 2-cells requires a bit more work.
\end{rmk}

\begin{rmk} 
The bicategory $\Bim(\CatSym)  \simeq  \EMK(\FRig)$ can be given an explicit description in terms of the (symmetric) substitudes.
In keeping with \cref{rmk:set-of-colours,rmk:opd=monad} and the definition of~$\OpdBim$, a substitude can be defined as a monad in~$\CatSym$ and~$\Bim(\CatSym)$ can be thought of as the bicategory of substitudes and bimodules between them.
However, 
the biequivalences 
\[
\Bim(\CatSym) \simeq  \Bim(\SetSym)=\OpdBim 
\]
discussed in~\cref{rmk:em-setsym=em-catsym} show that we do not gain any new 2-rigs by considering substitudes instead of operads.
\end{rmk}

\subsection{Algebras over operads}

For a 2-rig $\rigR$, we have seen in \cref{rmk:partial-monad} that it was possible to define an endomorphism operad $\End\rigR$ for $\rigR$ but in the category $\OPD$ of large operads.

\begin{defn}
\label{def:alg}
For a (small) operad $\opdP$ and a 2-rig $\rigR$ we define the category of \emph{$\opdP$-algebras in $\rigR$} to be 
\[
\Alg\rigR\opdP
\ \defeq\ 
\hom\OPD\opdP{\End\rigR}\,.
\]	
\end{defn}

\begin{rmk}
\label{rmk:alg-explicit}
More explicitly, if $A$ is the set of colours of $\opdP$, an $\opdP$-algebras in $\rigR$ is a diagram $X:A\to \rigR$ together with maps
\[
\matrice\opdP{a'_1,\dots, a'_n}a \stto
\Hom\rigR{X(a'_1)\otimes \dots\otimes X(a'_n)}{X(a)}
\]
for every $a'_1,\dots, a'_n,a$ in $A$, which are compatible with the composition of operations.
\end{rmk}

If $f:\opdP\to \opdQ$ is a morphism of operads, it induces a functor $f^*:\Alg\rigR\opdQ\to \Alg\rigR\opdP$.
If $u:\rigR\to \rigS$ is a morphism of 2-rigs, it induces a functor $u_\dagger:\Alg\rigR\opdP\to \Alg\rigS\opdP$. 
Combining these definitions, we obtain a pseudofunctor $\mathsf{Alg}:\Opd\sfop\times \Rig \to \CAT$.
If $\rigR$ is fixed, we get a pseudofunctor
\begin{align*}
\Algname\rigR:\Opd\sfop &\tto \CAT	\\
\opdP &\mto \Alg\rigR\opdP\,.
\end{align*}
And if $\opdP$ is fixed, we get a pseudofunctor
\begin{align*}
\AlgP:\Rig &\tto \CAT	\\
\rigR &\mto \Alg\rigR\opdP\,.
\end{align*}
In particular, when $\rigR=\opdrigP$, the unit $\eta_\opdP:\opdP\to \End{\sfU(\opdrigP)}$ defines a canonical element of the functor $\AlgP$, which is a universal $\opdP$-algebra in a 2-rig, as the next result will show.
We will also need the counit $\epsilon_\rigR:\env{\End\rigR}\to \rigR$.

\begin{lem}
\label{lem:univ-alg}
The pseudofunctor $\AlgP$ is representable by $(\opdrigP,\eta_\opdP)$:
\[
\Alg\rigR\opdP
\ \simeq\ 
\hom\Rig\opdrigP\rigR
\]
\end{lem}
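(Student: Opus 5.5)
The equivalence will come from composing two universal properties that the paper already has: the Hermida--Elmendorf biadjunction \eqref{eq:hermida-opd-smoncat} between $\Env$ and $\END$, and the relative biadjunction $\Psh \dashv \sfU$ of \cref{rmk:adj-smcat-rig} between small symmetric monoidal categories and 2-rigs. Concretely, for a 2-rig $\rigR$ I will establish the chain of equivalences
\begin{align*}
\Alg\rigR\opdP
&= \hom\OPD\opdP{\End{\sfU\rigR}}\\
&\simeq \hom\SMCAT{\envP}{\sfU\rigR}\\
&\simeq \hom\Rig{\opdrigP}{\rigR}\,,
\end{align*}
which is exactly the string of equivalences already written in \cref{rmk:partial-monad}, read in the other direction.

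For the first step, the biadjunction $\Env\dashv\END$ holds between small operads and small symmetric monoidal categories. Since $\rigR$ is only large, I will use its large version $\OPD \rightleftarrows \SMCAT$ from the bottom square of \cref{rmk:partial-monad}. Because $\opdP$ is small, the envelope $\envP$ is small, and the equivalence is implemented by precomposition with the unit $\eta_\opdP \co \opdP\to\End{\envP}$. On objects it sends a strong monoidal $g\co\envP\to\rigR$ to $\End g\circ\eta_\opdP$. In the other direction an algebra $X$ (in the sense of \cref{rmk:alg-explicit}) goes to $X^\dagger$, with $X^\dagger\symelt am = X(a_1)\otimes\dots\otimes X(a_m)$; the action on morphisms is read off from \eqref{eq:env}. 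This is where the counit $\epsilon_\rigR\co\env{\End\rigR}\to\rigR$ enters, since $X^\dagger=\epsilon_\rigR\circ\env X$.

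For the second step, I will use the universal property of Day convolution \cite{DayB:clocf,ImG:unipcm}. Restricting along Yoneda $\envP\to\opdrigP$ gives an equivalence between cocontinuous symmetric strong monoidal functors $\opdrigP\to\rigR$ and symmetric strong monoidal functors $\envP\to\rigR$; the inverse is left Kan extension, which is strong monoidal because $\otimes$ in $\rigR$ preserves colimits in each variable. On 2-cells, monoidal transformations between cocontinuous functors are determined by their restriction to representables, and the restriction of a monoidal transformation is monoidal.

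The final step is to identify the composite equivalence with evaluation at the canonical element. I will check that it sends the identity of $\opdrigP$ to $\eta_\opdP$ composed with the Yoneda embedding, and that it is pseudonatural in $\rigR$. Pseudonaturality holds because both steps are given by precomposition, while postcomposition with a 2-rig morphism $u$ corresponds to $u_\dagger$.

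I expect the main obstacle to be size and coherence bookkeeping rather than any substantive argument: making sure the relative biadjunction applies (smallness of $\envP$), and that the equivalences are natural in $\rigR$, so that $(\opdrigP,\eta_\opdP)$ is a representing object and not just pointwise equivalent. I would handle this by noting that each step is precomposition with a fixed map, namely $\eta_\opdP$ and the Yoneda embedding $\envP\to\opdrigP$, which is automatically pseudonatural in the target.
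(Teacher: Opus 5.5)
Your proposal is correct and follows the paper's proof: the paper also composes the (large) $\Env\dashv\END$ biadjunction with the relative biadjunction $\Psh\dashv\sfU$ given by Day convolution, obtaining $\Alg\rigR\opdP\simeq\hom\SMCAT{\envP}\rigR\simeq\hom\Rig\opdrigP\rigR$. Your extra remarks on size, on the explicit transposes, and on pseudonaturality via precomposition with $\eta_\opdP$ and the Yoneda embedding fill in details the paper leaves implicit.
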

\begin{proof}
The statement follows by adjunction:
\begin{align*}
\Alg\rigR\opdP
& =
\hom\OPD\opdP{\End\rigR}
\\
& \simeq
\hom\SMCAT{\envP}\rigR && \text{by the adjunction \eqref{eq:hermida-opd-smoncat}}
\\
& \simeq
\hom\Rig\opdrigP\rigR && \text{by the relative adjunction \eqref{eq:adj-smcat-rig}.}\qedhere
\end{align*}
\end{proof}

In analogy with topos theory, we shall say that $\eta_\opdP$ is the \emph{universal $\opdP$-algebra} and that $\opdrigP$ is the \emph{classifying 2-rig} of $\opdP$-algebras (or of the operad $\opdP$).
Given a $\opdP$-algebra $A$ in a rig $\rigR$, the corresponding morphisms of 2-rig $f_A:\opdrigP\to \rigR$ is called the \emph{classifying morphism} of $A$.

\begin{rmk}
Notice that there is a factorization $\eta_\opdP:\opdP \to \End{\envP} \to \End{\sfU(\opdrigP)}$, 
through the unit of the adjunction $\Env\dashv \END$.
In other terms, the universal $\opdP$-algebra in $\opdrigP$ leaves within $\envP\subseteq\opdrigP$.	
\end{rmk}

In other terms, \cref{lem:univ-alg} says that constructing a morphism from an operadic 2-rig $\opdrigP \to \rigR$ is equivalent to defining a $\opdP$-algebras in $\rigR$.
Let us review some examples.

\begin{ex}
\label{ex:univ-alg}
Following up on \cref{ex:opdrig} and \cref{rmk:alg-explicit}:
\begin{enumerate}[label=(\alph*)]
\item\label{ex:univ-alg:cat} If $C$ is a category viewed as an operad, a $C$-algebra in $\rigR$ is simply a diagram $C\to \rigR$.
The universal $C$-algebra is simply the canonical generating diagram $C\to \opdrig{\opdO_C}\simeq\freerigC$.
In particular, when $C=1$, the universal algebra in $\opdrig{\opdO_1}\simeq \freerig1=\psh\FinBij$ is the generator $\hat 1 = \hom\FinBij-1$.

\item \label{ex:univ-alg:com}
The universal $\Com$-algebra in $\opdrig\Com \simeq \psh\Fin$ is the object $\hat 1 = \hom\Fin-1$ represented by a terminal object 1 in $\Fin$, equipped with the unique maps $\hat n = \hat 1^{\otimes n}\to \hat 1$, for $n\geq0$.
Notice that the object $\hat 1$ is terminal in $\opdrig\Com$.
(This fact is only true for $\Com$, as can be verified in the other examples.)

\noindent 

Consider the canonical morphism of operads $\opdO_1\to \Com$.
By \cref{eq:adj-colours}, we get an adjunction 
$i_!:\freerig1\rightleftarrows \psh\Fin:i^*$,
where $i:\freesmc1\simeq\FinBij\subseteq \Fin$ is the canonical inclusion.
Since the universal algebra in $\opdrig\Com$ is the terminal objet, its image by the right adjoint $i^*$ is the terminal object of $\freerig 1$.
Recall that the terminal presheaf is always the colimit of the Yoneda embedding $\freesmc1\to \freerig1$.
If $\hat n$ denote the representable presheaves of $\freerig 1$, the discussion above shows that the image of the free commutative algebra by $i^*$ is the presheaf 
\[
\coprod_{n\geq 0} \hat 1^{\otimes n}/\mathfrak S_n
\cong
\coprod_{n\geq 0} \hat n/\mathfrak S_n\,,
\]
\ie the free commutative monoid on the object $\hat 1$ in $\freerig1$.
(Beware that, although the previous computation shows that $i^*i_!\hat 1$ is the free commutative monoid on $\hat 1$, the monad $i^*i_!$ on $\freerig1$ is not the free commutative monoid monad, as can be seen from the isomorphism $i^*i_!\hat 2 = i^*i_!\hat 1\otimes i^*i_!\hat 1$.)

\item \label{ex:univ-alg:comnu}
The universal $\Comnu$-algebra in $\opdrig\Comnu \simeq \psh\FinSurj$ is the object $\hat 1$ equipped with the unique maps $\hat n\to \hat 1$, for $n\geq1$.
Proceeding as in \ref{ex:univ-alg:com}, the image of this object under 
$i^*:\psh\FinSurj \to \freerig 1$ is $\coprod_{n\geq 1} \hat 1^{\otimes n}/\mathfrak S_n$,
the free non-unital commutative monoid on $\hat 1$ in $\freerig 1$.

\item 
With the previous notation, the universal $\Ezero$-algebra in $\opdrig\Ezero \simeq \psh\FinInj$ is the object $\hat 1$, represented by a singleton 1 in $\FinInj$, equipped with the unique map $\hat 0\to \hat 1$.
The image of this algebra by the 2-rig morphism $i^*:\opdrig\Ezero\to \freerig 1$ is the object $\hat 0 + \hat 1$ in $\freerig 1$
(notice that $\hat 0$ is the unit of the monoidal structure of $\freerig 1$).

\item The universal $\As$-algebra in $\opdrig\As \simeq \psh\FinOrd$ is the object $\hat 1$, represented by a singleton 1 in $\FinOrd$, and structured by all the $n!$ maps $\hat n\to \hat 1$, for $n\geq0$. (Notice that 1 is no longer terminal in $\FinOrd$.)
 The image of this algebra by the canonical 2-rig morphism $i^*:\opdrig\As\to \freerig 1$ is the object 
\[
\coprod_{n\geq 0} \hat 1^{\otimes n} = \coprod_{n\geq 0} \hat n\,,
\] 
\ie the free monoid on the object $\hat 1$ in $\freerig 1$.

\item Similarly, the universal $\Asnu$-algebra in $\opdrig\Asnu \simeq \psh\FinSurjOrd$ is the object represented by a singleton 1 in $\FinSurjOrd$ structured by all the $n!$ maps $n\to 1$, for $n\geq1$.
And its image by $i^*:\opdrig\Asnu\to \freerig 1$ is the object 
$\coprod_{n\geq 1} \hat 1^{\otimes n}$,
\ie the free non-unital monoid on $\hat 1$ in $\freerig 1$.
\end{enumerate}	
\end{ex}

Recall the notion of an essentially surjective morphism of operads from \cref{ex:opd}\,\ref{def:opd-surj}.

\begin{lem}
\label{lem:adj-relative-free-alg}
For an essentially surjective morphism of operads $f:\opdP\to \opdQ$ and a 2-rig $\rigR$,
the adjunction of \cref{cor:opdrig=opdbim}
\[
\begin{tikzcd}
\rigmor{f_\circ}:\opdrigP
\ar[r, shift left=2]
\ar[r, phantom, description, "\scriptstyle{\vdash}" rotate=90]
&
\opdrigQ:\rigmor{f^\circ} 
\ar[l, shift left=2]\,. 
\end{tikzcd}
\]
and the induced adjunction
\begin{equation}
\label{eq:adj-relative-free-alg}	
\begin{tikzcd}
\rigmor{f^\circ}^*:
\Alg\rigR\opdP
\ar[r, shift left=2]
\ar[r, phantom, description, "\scriptstyle{\vdash}" rotate=90]
&
\Alg\rigR\opdQ
:\rigmor{f_\circ}^*
\ar[l, shift left=2]\,. 
\end{tikzcd}
\end{equation}
are both monadic.
\end{lem}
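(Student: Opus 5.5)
Both monadicity claims will come from Beck's theorem. For the 2-rig adjunction $\rigmor{f_\circ}\dashv\rigmor{f^\circ}$, I will identify the right adjoint $\rigmor{f^\circ}$ with restriction along the envelope functor. Under the identifications $\opdrigP=\psh{\envP}$ and $\opdrigQ=\psh{\envQ}$, the bimodule $f^\circ$ has coefficients $\matrice\opdQ{f(a_1),\dots,f(a_m)}b$. These are exactly the hom-sets $\Hom{\envQ}{\env f(\vec a)}{\angles b}$, extended monoidally. So $\rigmor{f^\circ}$ should be precomposition $\env f^*\co\psh{\envQ}\to\psh{\envP}$ and $\rigmor{f_\circ}$ should be the left Kan extension $\env f_!$, which is monoidal by \cref{lem:monoidal-Kan-ext}. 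I would check this identification on representables via \cref{lem:bim=k2em} and the formula \eqref{eq:env}, using that both sides are cocontinuous and that the left adjoint is determined by the bimodule $f_\circ$.

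Next I show that $\env f$ is essentially surjective. An object of $\envQ$ is a sequence $\angles{b_1,\dots,b_n}$. Since $f_1$ is essentially surjective, each $b_i$ is isomorphic in $\opdQ_1$ to some $f(a_i)$. By \eqref{eq:env}, the identity bijection together with these invertible unary operations gives an isomorphism $\angles{f(a_1),\dots,f(a_n)}\cong\angles{b_1,\dots,b_n}$ in $\envQ$. \Cref{lem:surj-small=monadic} then says that $\env f_!\dashv\env f^*$ is monadic in $\COC$, hence monadic as a mere adjunction. Since the forgetful $\Rig\to\CAT$ creates Eilenberg--Moore objects (\cref{thm:rig-em-complete}), $\opdrigQ$ is also the Eilenberg--Moore object in $\Rig$ of the 2-rig monad $\env f^*\env f_!$.

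For the adjunction \eqref{eq:adj-relative-free-alg}, I use \cref{lem:univ-alg} to identify $\Alg\rigR\opdP$ with $\hom\Rig\opdrigP\rigR$. Under this identification, the two functors are precomposition with $\rigmor{f^\circ}$ and with $\rigmor{f_\circ}$. Precomposition with $\rigmor{f_\circ}$ is the right adjoint, because $\hom\Rig-\rigR$ is contravariant and so turns the adjunction around. Conservativity comes from the density of the image. A 2-rig morphism out of $\opdrigQ$ is determined by its restriction to representables, and every representable is isomorphic to some $\env f(\vec a)=\rigmor{f_\circ}(\vec a)$. So a transformation $\alpha$ with $\alpha\circ\rigmor{f_\circ}$ invertible is invertible on representables, hence invertible everywhere by cocontinuity.

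For Beck's condition I would use the reflexive-coequaliser (tameness) version. The hom-categories of $\Rig$ have reflexive coequalisers, created from $\fun\rigR\rigS$ by \cref{lem:forget-sifted}, and precomposition preserves them since they are computed pointwise. It then remains to check that a $(\rigmor{f_\circ}^*)$-split coequaliser is preserved. I expect this to be the main obstacle. The approach I would try is formal: in a 2-category, if $u\co A\to A^p$ is an Eilenberg--Moore object of $p$, then $\hom\ccatC{A^p}X$, via the Kleisli universal property (\cref{lem:em=k}), is the category of right $p$-modules $\RMod\ccatC ApX$. Precomposition with the left adjoint $F=p^p$ is then the forgetful functor from right modules, which is monadic because it is an Eilenberg--Moore object in $\CAT$, as noted in the remark after \cref{lem:bim=k2em}. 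Taking $\ccatC=\Rig$, $p=\env f^*\env f_!$ and $X=\rigR$ gives the monadicity of \eqref{eq:adj-relative-free-alg}.
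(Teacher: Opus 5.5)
Your proposal is correct and follows the paper's proof. For the first adjunction you use essential surjectivity of $\env f$ together with \cref{lem:surj-small=monadic} and \cref{thm:rig-em-complete}; for the second, your final argument is the paper's: $\hom\Rig-\rigR$ sends the Kleisli object to an Eilenberg--Moore object in $\CAT$, and by tameness (\cref{lem:em=k}) the Kleisli object is the Eilenberg--Moore object. Your explicit identification of $\rigmor{f_\circ}\dashv\rigmor{f^\circ}$ with $\env f_!\dashv\env f^*$ fills in a step the paper leaves implicit, whereas the partial Beck-style checks (conservativity, reflexive coequalisers) are unnecessary once you use that formal argument.
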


\begin{proof}
If the functor $\opdP\to \opdQ$ is essentially surjective, then so is the functor $\env\opdP\to \env\opdQ$.
Then the first statement follows from \cref{lem:surj-small=monadic,thm:rig-em-complete}.

The hom pseudofunctor $\Hom\Rig-\rigR:\Rig\op\to \CAT$ sends bicolimits to bilimits.
In particular, it sends Kleisli objects to Eilenberg--Moore objects.
Since $\Rig$ is tame (\cref{cor:RIG-is-tame}) it sends also Eilenberg--Moore objects to Eilenberg--Moore objects.
This proves the second statement.
\end{proof}

The functors $\rigmor{f^\circ}^*\dashv\rigmor{f_\circ}^*$ is sometimes called the \emph{extension--restriction} adjunction, like in \cite{FresseB:modoof}.

\begin{ex}
\label{ex:adj-induction-restriction}
We fix a 1-rig $(\rigR,\otimes,\unit)$.
We give a few examples of adjunction 
$\rigmor{f^\circ}^*\dashv\rigmor{f_\circ}^*$.
For this, we apply freely formula \eqref{eq:induction}
from \cref{app:anasf} to compute $\rigmor{f^\circ}^*$, 
as well as \cref{lem:monoidal-Kan-ext}.
\begin{enumerate}
\item When \eqref{eq:adj-relative-free-alg}	is applied to the canonical morphism $i:\opdO_1\to \Com$, we get an adjunction 
\[
\begin{tikzcd}
F_\Com:\rigR
\ar[r, shift left=2]
\ar[r, phantom, description, "\scriptstyle{\vdash}" rotate=90]
&
\Alg\rigR\Com:U_\Com
\ar[l, shift left=2]\,,
\end{tikzcd}
\]
where $U_\Com$ is the forgetful functor and its left adjoint $F_\Com$ is the free commutative monoid functor. 
A computation shows that it is given by
\[
F_\Com(X)
\cong
\colim_{N:\FinBij} X^{\otimes N}
\cong
\int^{\underline n\in \freesmc 1} \Com(n)\otimes X^{\otimes n}
\,.
\]

\item[(i')]
More generally, when \eqref{eq:adj-relative-free-alg}	is applied to the canonical morphism of operads $\opdO_A\to \opdP$, we get an adjunction
\begin{equation}
\label{eq:adj-free-alg}
\begin{tikzcd}
F_\opdP:\fun A\rigR
\ar[r, shift left=2]
\ar[r, phantom, description, "\scriptstyle{\vdash}" rotate=90]
&
\Alg\rigR\opdP:U_\opdP
\ar[l, shift left=2]\,,
\end{tikzcd}
\end{equation}
where the right adjoint $U_\opdP$ is the functor sending a $\opdP$-algebra $\opdrigP\to \rigR$ to its \emph{underlying diagram of objects}
\[
A\stto
\freerigA \stto
\opdrigP \stto \rigR\,.
\]
The left adjoint $F_\opdP$ is the \emph{free $\opdP$-algebra} functor on such diagrams, given by
\[
F_\opdP(a\mapsto X_a)
\cong
\int^{\vec a}
\Smatrice \opdP aa
\otimes 
\big(X_{a_1}\otimes\dots\otimes X_{a_n}\big)
\]

\item Using the canonical morphism $\Comnu\to \Com$, we get an adjunction 
\[
\begin{tikzcd}
F_\eta:\Alg\rigR\Comnu
\ar[r, shift left=2]
\ar[r, phantom, description, "\scriptstyle{\vdash}" rotate=90]
&
\Alg\rigR\Com:U_\eta
\ar[l, shift left=2]\,,
\end{tikzcd}
\]
where the functor $U_\eta$ forgets the unit of the $\Com$-structure.
Its left adjoint $F_\eta$, adding a free unit, is given by
\[
F_\eta(M)
\cong
(\unit + M)
\cong
\int^n
\Com(n)
\underset{\Comnu}\otimes X^{\otimes n}
\]

\item Using the canonical morphism $i:\Ezero\to \Com$, we get an adjunction 
\[
\begin{tikzcd}
F_\mu:\Alg\rigR\Ezero
\ar[r, shift left=2]
\ar[r, phantom, description, "\scriptstyle{\vdash}" rotate=90]
&
\Alg\rigR\Com:U_\mu
\ar[l, shift left=2]\,,
\end{tikzcd}
\]
where the functor $U_\mu$ forgets the multiplication of the $\Com$-structure.
Its left adjoint $F_\mu$ is the free commutative monoid on a unital object $\unit\to X$, given by the colimit (in $\rigR$)
\[
F_\mu(\unit\to X)
\cong
\colim_{N\in\FinInj} X^N
=
\int^n
\Com(n)
\underset{\Ezero}\otimes X^{\otimes n}
\]

\item Using the canonical morphism $\mathsf{ab}:\As\to \Com$, we get an adjunction 
\[
\begin{tikzcd}
F_\mathsf{ab}:\Alg\rigR\As
\ar[r, shift left=2]
\ar[r, phantom, description, "\scriptstyle{\vdash}" rotate=90]
&
\Alg\rigR\Com:U_\mathsf{ab}
\ar[l, shift left=2]\,,
\end{tikzcd}
\]
where the functor $U_\mathsf{ab}$ forgets the commutativity structure.
Its left adjoint $F_\mathsf{ab}$ is the abelianisation functor of monoids, given by the colimit (in $\rigR$)
\[
F_\mathsf{ab}(M) \smto \colim_{N\in \FinOrd} M^N
=
\int^n
\Com(n)
\underset{\As}\otimes M^{\otimes n}
\]

\item Using the canonical morphism $i:\Ezero\to \As$, we get an adjunction 
\[
\begin{tikzcd}
F_\mu:\Alg\rigR\Ezero
\ar[r, shift left=2]
\ar[r, phantom, description, "\scriptstyle{\vdash}" rotate=90]
&
\Alg\rigR\As:U_\mu
\ar[l, shift left=2]\,,
\end{tikzcd}
\]
where the functor $U_\mu$ forgets the multiplication of the $\As$-structure.
Its left adjoint $F_\mu$ is the free monoid on a unital object $\unit\to X$, given by the colimit (in $\rigR$)
\[
F_\mu(\unit\to X) \smto \colim_{\set n\in\Delta_+^\mathrm{inj}} X^n
\ =\ 
\int^n
\As(n)
\underset{\Ezero}\otimes X^{\otimes n}
\]

\item Using the canonical morphism $i:\Asnu\to \As$, we get an adjunction 
\[
\begin{tikzcd}
F_\mu:\Alg\rigR\Ezero
\ar[r, shift left=2]
\ar[r, phantom, description, "\scriptstyle{\vdash}" rotate=90]
&
\Alg\rigR\As:U_\mu
\ar[l, shift left=2]\,,
\end{tikzcd}
\]
where the functor $U_\eta$ forgets the unit of the $\As$-structure.
Its left adjoint $F_\eta$ is the free monoid on a non-unital monoid $\unit\to X$, given by the colimit (in $\rigR$)
\[
F_\eta(M) \smto \unit + M = \colim_{\set n\in\Delta_+^\mathrm{surj}} M^n
=
\int^n
\As(n)
\underset{\Asnu}\otimes M^{\otimes n}
\]
The ordinal $[0]$ is almost terminal in $(\Delta_+^\mathrm{surj})\op$ (it is only missing a map from $[-1]$)
Consequently $\{[-1],[0]\}\to \Delta_+^\mathrm{surj}$ is cofinal. 

\item Using the canonical morphism $i:\opdO_1\to \As$, we get an adjunction 
\[
\begin{tikzcd}
F_\As:\rigR
\ar[r, shift left=2]
\ar[r, phantom, description, "\scriptstyle{\vdash}" rotate=90]
&
\Alg\rigR\As:U_\As
\ar[l, shift left=2]\,,
\end{tikzcd}
\]
where the functor $U_\As$ forgets the $\As$-structure.
Its left adjoint $F_\As$ is the free monoid functor, given by the colimit (in $\rigR$)
\[
F_\eta(M) \smto  \colim_{\set n\in\Delta_+} M^n
\ =\ 
\int^n
\As(n)\otimes M^{\otimes n}\,.
\]

\end{enumerate}
\end{ex}

\begin{rmk}
\label{rmk:free-alg}
Recall from \cref{lem:univ-alg} the universal $\opdP$-algebra $\eta_\opdP$ in $\opdrigP$.
We have seen empirically in \cref{ex:univ-alg} that the image of $\eta_\opdP$ by $i^*:\opdrigP\to \freerigA$ is always the free $\opdP$-algebra $F_\opdP(i_A)$ on the canonical diagram $i_A:A\to \freerigA$.
We can now prove this as a general fact.
Consider the adjunction \eqref{eq:adj-free-alg} for $\rigR=\freerigA$.
Then, by construction of $F_\opdP$, the diagram $i_A$ is send by $F_\opdP$ to the $\opdP$-algebra classified by $i^*:\opdrigP \to \freerigA$.
More generally, if $X:A\to \rigR$ is an arbitrary diagram and $\bar X:\freerigA\to \rigR$ the corresponding 2-rig morphism, the free $\opdP$-algebra on $X$ in $\rigR$ is simply the image of $F_\opdP(i_A)$ by the 2-rig morphism $\bar X$.
\end{rmk}

\begin{lem}
\label{lem:coc-alg}
For an operad $\opdP$ and a 2-rig $\rigR$, 
the category $\Alg\rigR\opdP$ is cocomplete.
\end{lem}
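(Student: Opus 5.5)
We deduce cocompleteness of $\Alg\rigR\opdP$ from its description as a category of 2-rig morphisms together with monadicity. By \cref{lem:univ-alg} we have $\Alg\rigR\opdP\simeq\hom\Rig\opdrigP\rigR$, so it suffices to show that this hom-category is cocomplete. The first step is to use the adjunction \eqref{eq:adj-free-alg} of \cref{ex:adj-induction-restriction}, which comes from the canonical morphism $\opdO_A\to\opdP$, where $A$ is the set of colours. That morphism is bijective on colours, hence essentially surjective, so \cref{lem:adj-relative-free-alg} shows that
\[
U_\opdP\co\Alg\rigR\opdP\tto\fun A\rigR
\]
is monadic. Since $A$ is a small set and $\rigR$ is presentable, hence cocomplete, the base $\fun A\rigR\simeq\rigR^A$ is cocomplete.

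The second step is to see that $U_\opdP$ creates sifted colimits, in particular reflexive coequalisers. Under the equivalence above, $U_\opdP$ is precomposition with $\rigmor{i_\circ}\co\freerigA\to\opdrigP$, followed by the equivalence $\hom\Rig\freerigA\rigR\simeq\fun A\rigR$. By \cref{lem:forget-sifted}, sifted colimits in $\hom\Rig\opdrigP\rigR$ and $\hom\Rig\freerigA\rigR$ exist and are created by the forgetful functors to functor categories, where they are computed pointwise. Precomposition with $\rigmor{i_\circ}$ preserves pointwise colimits, so $U_\opdP$ preserves sifted colimits. Being monadic, it then creates them. Alternatively, \cref{prop:rigs-local-sifted} says directly that $(-)\circ\rigmor{i_\circ}$ preserves sifted colimits.

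The third step is to apply Linton's theorem: if $T$ is a monad on a cocomplete category and the category of $T$-algebras has reflexive coequalisers, then the category of algebras is cocomplete. Here $T=U_\opdP F_\opdP$ on the cocomplete category $\fun A\rigR$, and reflexive coequalisers exist in the algebras by the second step. Concretely, the colimit of a diagram of algebras is the reflexive coequaliser of the two maps $F_\opdP U_\opdP F_\opdP U_\opdP\rightrightarrows F_\opdP U_\opdP$, applied to the free algebras on the underlying coproduct. So $\Alg\rigR\opdP$ is cocomplete.

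The main obstacle is checking that the monadicity given by \cref{lem:adj-relative-free-alg} really applies to $\opdO_A\to\opdP$, and that $U_\opdP$ matches the functor in that lemma. If that identification is awkward, a fallback avoids it: exhibit coproducts in $\hom\Rig\opdrigP\rigR$ directly and combine them with reflexive coequalisers. Finite coproducts would come from a coproduct of operads, and small coproducts from filtered colimits of finite ones, since filtered colimits are sifted and so exist by \cref{lem:forget-sifted}. For cocompleteness this needs only coproducts and reflexive coequalisers, not coequalisers in general: a coequaliser of $f,g\co X\rightrightarrows Y$ is the reflexive coequaliser of $(f,\id),(g,\id)\co X+Y\rightrightarrows Y$, which is reflexive via the coproduct inclusion $Y\to X+Y$. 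Small coproducts of $\opdP$-algebras can also be built directly: the free algebra on a coproduct of free algebras is free, and the general case presents each algebra as a reflexive coequaliser of free ones.
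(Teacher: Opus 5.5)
Your proof is correct, and it has the same structure as the paper's. Both use the monadic adjunction $F_\opdP\dashv U_\opdP$ of \eqref{eq:adj-free-alg} over the cocomplete category $\fun A\rigR$, with monadicity from \cref{lem:adj-relative-free-alg}, together with the local sifted colimits of \cref{prop:rigs-local-sifted}. The identification you flag as the main obstacle is exactly item (i') of \cref{ex:adj-induction-restriction}, which the paper's proof also relies on, so your coproduct fallback is not needed.

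The difference is in the final step.
\begin{itemize}
\item The paper uses sifted colimits only to see that both adjoints, and hence the monad $U_\opdP F_\opdP$, preserve filtered colimits. It then quotes \cite[10.3]{Gabriel-Ulmer}.
\item You use sifted colimits to get reflexive coequalisers in $\Alg\rigR\opdP$ itself, via \cref{lem:forget-sifted}, and then quote Linton's theorem.
\end{itemize}
Your route trades a rank condition on the monad for reflexive coequalisers of algebras, which \cref{lem:forget-sifted} gives you directly. It needs only coproducts in the base, and Linton's theorem is the more elementary tool. The paper's route puts the monad in the accessible setting instead. Since $\fun A\rigR$ is presentable and the monad is finitary, that route can be upgraded to show $\Alg\rigR\opdP$ is presentable, not merely cocomplete, as in \cite[2.78]{Adamek_Rosicky_1994}.

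One small slip: your ``concrete'' description of the colimit is wrong as written. The coproduct of algebras $X_i$ is the reflexive coequaliser of $F_\opdP(\coprod_i U_\opdP F_\opdP U_\opdP X_i)\rightrightarrows F_\opdP(\coprod_i U_\opdP X_i)$. What you wrote is the canonical presentation of the free algebra on $\coprod_i U_\opdP X_i$. Since you invoke Linton's theorem rather than this formula, the proof is unaffected.
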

\begin{proof}
Consider the adjunction \eqref{eq:adj-free-alg}.
The category $\fun A\rigR$ is clearly cocomplete.
The adjunction is monadic by \cref{lem:adj-relative-free-alg}.
Moreover both functor preserves sifted, hence filtered, colimits by \cref{prop:rigs-local-sifted}.
Then $\Alg\rigR\opdP$ is cocomplete by 
\cite[10.3]{Gabriel-Ulmer}.
\end{proof}

Let $\SIFTCATCOC\subseteq\SIFTCAT$ be the full subcategory spanned by the categories which are cocomplete.

\begin{prop}
\label{prop:alg-pt}
The functor $\Algname\rigR$ of algebras admits the following factorization:
\[
\begin{tikzcd}
\Opd\sfop \ar[r,"\Algname\rigR"] \ar[d,"\Psh \circ \Env"']& \CAT\\
(\OpdRig)\sfop \ar[>->, r] & \SIFTCATCOC \ar[u, "U"']\,.
\end{tikzcd}
\]
\end{prop}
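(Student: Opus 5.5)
Since $\SIFTCAT$ is not defined earlier, we read it as the bicategory of categories with sifted colimits and sifted-colimit-preserving functors, and $\SIFTCATCOC$ as its full sub-bicategory of cocomplete ones. The plan is to use \cref{lem:univ-alg}, which gives, pseudonaturally in $\opdP$,
\[
\Alg\rigR\opdP \ \simeq\ \hom\Rig\opdrigP\rigR \,.
\]
So $\Algname\rigR$ is equivalent to the composite of $\Psh\circ\Env \co \Opd\sfop \to (\OpdRig)\sfop$ with the restricted hom pseudofunctor $\Hom\Rig-\rigR \co (\OpdRig)\sfop \to \CAT$. It then suffices to show that this hom pseudofunctor factors through $\SIFTCATCOC$. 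This means checking three things: its values are cocomplete categories with sifted colimits, the functors it induces preserve sifted colimits, and the resulting pseudofunctor into $\SIFTCATCOC$ is the one drawn with the arrow decoration of the diagram.

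First, the objects. For an operadic 2-rig $\opdrigP$, the category $\hom\Rig\opdrigP\rigR\simeq\Alg\rigR\opdP$ is cocomplete by \cref{lem:coc-alg}. This property is invariant under equivalence, so it holds for every object of $\OpdRig$. Second, the morphisms. A 2-rig morphism $g \co \rigS\to\rigS'$ between operadic 2-rigs induces precomposition $(-)\circ g \co \hom\Rig{\rigS'}\rigR\to\hom\Rig\rigS\rigR$. By \cref{prop:rigs-local-sifted}, composition in $\Rig$ preserves sifted colimits in each variable, so this functor preserves sifted colimits. The 2-cells are sent to natural transformations, with no condition to check. Hence the hom pseudofunctor lands in $\SIFTCATCOC$. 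Composing with the forgetful $U \co \SIFTCATCOC\to\CAT$ recovers $\Hom\Rig-\rigR$, and therefore $\Algname\rigR$ up to the equivalence of \cref{lem:univ-alg}. This gives the commutativity of the square up to pseudonatural equivalence.

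Third, the decoration $\rightarrowtail$ on the bottom arrow. We read it as saying that $(\OpdRig)\sfop\to\SIFTCATCOC$ is faithful on 2-cells, or perhaps locally fully faithful. Faithfulness is the easy half. A transformation $\alpha$ between $g,g' \co \rigS\to\rigS'$ is recovered from the induced transformation $(-)\circ\alpha$. To see this, we would take $\rigR$ to be a suitable test 2-rig and evaluate at a generating $\opdP$-algebra, for instance using the universal algebra $\eta_\opdP$ together with \cref{rmk:free-alg}. This is a Yoneda-type argument.

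The main obstacle is this last point if full faithfulness is intended. For a single fixed $\rigR$, the hom functor need not be full, since not every sifted-cocontinuous functor $\Alg\rigR\opdQ\to\Alg\rigR\opdP$ arises from a bimodule. If that stronger reading is required, we would first try to restrict to functors that commute with the monadic forgetful functors of \eqref{eq:adj-free-alg}. Failing that, we would only claim the factorization together with faithfulness, treating the decoration as indicating that the arrow is a monomorphism-like inclusion. The commutation of the square is unproblematic in any case, since it is exactly \cref{lem:univ-alg}.
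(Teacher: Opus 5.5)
Your first two steps are exactly the paper's proof. \cref{lem:univ-alg} gives the factorization through $(\OpdRig)\sfop$ via $\Hom\Rig-\rigR$, and \cref{lem:coc-alg} together with \cref{prop:rigs-local-sifted} give the factorization through $\SIFTCATCOC$.

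The paper makes no claim about the $\rightarrowtail$ decoration and proves nothing about it, so you can drop your third step. The faithfulness you call ``the easy half'' is in fact false for a fixed $\rigR$. If $\rigR$ is the terminal 2-rig (one object, one morphism), every $\hom\Rig{\rigS}\rigR$ is trivial, so all parallel 2-cells are identified. Your Yoneda argument only works because it silently replaces the fixed $\rigR$ by a test 2-rig of your choosing, which the statement does not allow.
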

\begin{proof}
The factorization through $(\OpdRig)\sfop$ follows from \cref{lem:univ-alg}.
And the factorization through $\SIFTCATCOC$ follows from \cref{prop:rigs-local-sifted,lem:coc-alg}.
\end{proof}

\Cref{lem:opd-in-opdbim} below has been obtained in the higher categorical setting in~\cite[Theorem~4.6]{Zetto}. 

\begin{lem}
\label{lem:opd-in-opdbim}
The pseudofunctors $\Env:\Opd\to \SMCat$ and $\Psh:\SMCat\to \Rig$, and therefore their composition, are 2-fully faithful (\ie~fully faithful on hom categories).
In particular, for any two operads $\opdP$ and $\opdQ$, the functor
\[
\Hom\Opd\opdP\opdQ \stto \Hom\Rig\opdrigP\opdrigQ
\]
is fully faithful.
\end{lem}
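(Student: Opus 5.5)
Both claims are 2-full faithfulness statements: the functor on hom categories should be fully faithful, so it must be injective and surjective on 2-cells between fixed 1-cells. I will treat $\Env$ and $\Psh$ separately; the composite statement then follows because a composite of functors that are fully faithful on hom categories is again fully faithful on hom categories.

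\textbf{The pseudofunctor $\Psh$.} Let $f,g\co\moncatM\to\moncatN$ be symmetric strong monoidal functors and consider $\Psh(f)=f_!$ and $\Psh(g)=g_!$. These are the left Kan extensions along the Yoneda embeddings, and they are cocontinuous strong monoidal for Day convolution.
\begin{itemize}
\item By the universal property of presheaf categories as free cocompletions, restriction along $\emb_\moncatM$ gives a bijection between natural transformations $f_!\Rightarrow g_!$ and natural transformations $\emb_\moncatN f\Rightarrow\emb_\moncatN g$.
\item Since $\emb_\moncatN$ is fully faithful, the latter are in bijection with natural transformations $f\Rightarrow g$.
\item It remains to match the monoidal conditions. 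Day convolution restricted to representables is the tensor of $\moncatN$, and the monoidal constraints of $f_!$ restrict on representables to those of $f$. Hence $\alpha\co f\Rightarrow g$ is monoidal exactly when its extension $\alpha_!$ is monoidal. One direction is immediate by restriction. For the other, both sides of the monoidal axiom for $\alpha_!$ are natural transformations between functors cocontinuous in each variable, so they agree as soon as they agree on pairs of representables.
\end{itemize}

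\textbf{The pseudofunctor $\Env$.} I would use the biadjunction $\Env\dashv\END$ of \eqref{eq:hermida-opd-smoncat}. This gives $\Hom\SMCat{\envP}{\envQ}\simeq\Hom\Opd\opdP{\End{\envQ}}$, and under this equivalence the action of $\Env$ on homs becomes postcomposition with the unit $\eta_\opdQ\co\opdQ\to\End{\envQ}$. So it suffices to show that postcomposition with $\eta_\opdQ$ is fully faithful on hom categories, and for that it is enough that $\eta_\opdQ$ is fully faithful as an operad morphism, i.e.\ bijective on operations.

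This bijectivity comes from \eqref{eq:env}. The colour $q$ is sent to the singleton sequence $\angles{q}$, and
\[
\Hom{\envQ}{\angles{q_1}\otimes\cdots\otimes\angles{q_n}}{\angles q}
=\coprod_{\phi\co\set n\to\set 1}\matrice\opdQ{q_1,\ldots,q_n}q
=\matrice\opdQ{q_1,\ldots,q_n}q ,
\]
since there is exactly one map $\set n\to\set 1$.

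To conclude, I check directly that postcomposition with a fully faithful operad morphism $j$ is fully faithful on 2-cells. By \cref{defn:opd-2cell}, a transformation $f\Rightarrow g$ is a family of unary operations $f(a)\to g(a)$ subject to a naturality condition. The morphism $j$ is bijective on unary operations, so it gives a bijection on such families. It is also injective on all operations, so it reflects the naturality equations.

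\textbf{Main obstacle.} The delicate point is making sure that the identification of $\Env$ on homs with postcomposition by $\eta_\opdQ$ is compatible at the level of 2-cells, not only 1-cells. This is where I would invoke the standard fact that a left biadjoint is locally fully faithful iff the unit is locally fully faithful in the appropriate sense. If that fact is awkward to cite, I would argue directly instead. A monoidal transformation between functors $\envP\to\envQ$ is determined by its components on singletons, because objects are tensors of singletons and the transformation is monoidal. Those components are unary operations of $\opdQ$, and its naturality reduces to naturality with respect to the generating morphisms $\angles{a_1,\ldots,a_n}\to\angles a$, which are exactly the operations of $\opdP$.
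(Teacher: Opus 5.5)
Your proposal is correct and takes the same route as the paper's sketch. For $\Env$, you use the biadjunction $\Env\dashv\END$ to reduce to full faithfulness of the unit $\eta_\opdQ\co\opdQ\to\End{\envQ}$; you check that directly from the hom-set formula for the envelope, where the paper cites Batanin--Berger. For $\Psh$, you use the 2-full faithfulness of presheaves as a free cocompletion, together with a check that monoidality of transformations is preserved and reflected, which supplies the details the paper leaves to the reader.
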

\begin{proof}
This is a computation left to the reader.
The first statement relies on the fact that the unit $\opdP\to \End\envP$ is a fully faithful morphism of operads \cite[Lemma 5.3]{BataninM:regpsf}.
And the second one derives from the 2-fully faithfulness
of $\Psh:\Cat\to \COC$.
\end{proof}

\begin{rmk}
\label{rmk:adjoint-rig-morphism}
\Cref{lem:opd-in-opdbim} means that morphisms between operadic 2-rigs (\ie~operad bimodules thanks to \cref{thm:opdrig=opdbim}) provide a faithful extension of morphisms of operads.
With this in mind, \cref{prop:alg-pt} can be understood to mean that the categories $\Alg\rigR\opdP$ are not only natural with respect to operad morphisms, but also with respect the larger class of morphisms between operadic 2-rigs.
Typically, every operad morphism inherits a right adjoint in $\OpdRig$ (\cref{cor:opdrig=opdbim}) which is not in general associated to an operad morphism (see \cref{ex:free-alg}).

Replacing operads by 2-rigs allows to classifies the same objects of interest---the algebras over operads---but by a means that is more flexible.
For example, it is not possible in general to construct a free $\opdP$-algebra in an operad $\opdQ$, but this is always possible in the 2-rig $\opdrigQ$.
\end{rmk}

\begin{ex}
Examples of 2-rig morphisms not associated to operad morphisms.
\label{ex:free-alg}
\begin{enumerate}[label=(\alph*)]
\item\label{ex:free-alg:abel}
Specializing \eqref{eq:adj-relative-free-alg} to the canonical morphism of operads $u:\As\to \Com$ (see \cref{ex:opd}\,\ref{ex:opd:com})
, we get an adjunction
\[
\begin{tikzcd}
F_u:\Alg\rigR\As
\ar[r, shift left=2]
\ar[r, phantom, description, "\scriptstyle{\vdash}" rotate=90]
&
\Alg\rigR\Com:U_u
\ar[l, shift left=2]\,. 
\end{tikzcd}
\]
where $U_u$ is the functor forgetting the commutativity structure and $F_u$ is the abelianisation functor.
The functor $U_u$ is induced by the 2-rig morphism $u^*\defeq\opdrig u: \opdrig\As\to\opdrig\Com$.
The functor $F_u$ is induced by the left adjoint 
$u_!\defeq\opdrig\Com\to \opdrig\As$.
Notice that the functor $u_!$ cannot be of the type $v^*$ for some operad morphism $v:\Com\to \As$ since there are no such morphisms. 

\item Other examples are given by the functor $\Alg\rigR{\Com}\to \Alg\rigR{\Com}$ which sends a commutative monoid $M$ to $\mathsf S_n M = M^{\otimes n}/\mathfrak S_n$, the $n$-th symmetric power of $M$,
and the functor sending $M$ to $\freesmc M = \coprod_n M^{\otimes n}/\mathfrak S_n$, the free symmetric monoid on $M$.
Both of them are easily representable by 2-rig morphisms $\opdrig\Com\to \opdrig\Com$ but cannot be representable by operad morphisms $\Com\to\Com$.
\end{enumerate}	
\end{ex}

\section{Enriched operadic rigs}
\label{sec:enror}

Let $\rigV$ be a 2-rig.
In this section, we explain how to generalize our results to operads enriched over $\rigV$.

\medskip
We define a \emph{$\rigV$-rig} to be a 2-rig morphism $\rho:\rigV\to \rigR$.
By abuse, we shall sometimes denote a $\rigV$-rig simply by $\rigR$, leaving the structure map $\rho$ implicit.
Given two $\rigV$-rigs $\rho:\rigV\to \rigR$ and $\sigma:\rigV\to \rigS$, a \emph{morphism of $\rigV$-rig} is a 2-rig morphism $f:\rigR\to\rigS$ together with an isomorphism $\phi:f\rho\simeq \sigma$.
We denote $\VRig$ the bicategory of $\rigV$-rigs.

\begin{short-rmk}
Any $\rigV$-rig $\rho:\rigV\to \rigR$ is always enriched (and cotensored) over $\rigV$ \cite{Janelidze2001}.
This provides an alternative definition of  a $\rigV$-rig as a $\rigV$-enriched category
that is presentable and symmetric monoidal in the enriched sense (thus, in particular, 
$\rigV$-tensored). We prefer to use the definition above for simplicity.
\end{short-rmk}

\begin{short-lem}
\label{lem:VRIG-tame:1}
The bicategory $\VRig$ is tame and the forgetful pseudofunctor $\VRig\to \Rig$ is tame.
\end{short-lem}
\begin{proof}
By definition, the category of morphisms of $\rigV$-rigs from $r:\rigV\to \rigR$ to $s:\rigV\to \rigS$ is defined as the fiber product in $\CAT$:
\begin{equation}
\label{eq:VRIG-tame}
\begin{tikzcd}
{\hom\VRig\rigR\rigS} \ar[r]\ar[d] & {\hom\Rig\rigR\rigS} \ar[d,"\precompo r"]\\
1\ar[r,"s"]	& {\hom\Rig\rigV\rigS}\,.
\end{tikzcd}
\end{equation}
Recall that a sifted category $I$ is always ``weakly contractible'' in the sense that $\colim_I 1 = 1$.
Therefore, any functor $1\to C$ always preserves sifted colimits.
This show that the cospan of \eqref{eq:VRIG-tame} is in $\SIFTCAT$.
Since the forgetful functor $\SIFTCAT\to \CAT$ creates limits, the square \eqref{eq:VRIG-tame} is therefore cartesian in $\SIFTCAT$.
This shows that $\hom\VRig\rigR\rigS$ has sifted colimits and that the functor $\hom\VRig\rigR\rigS \to\hom\Rig\rigR\rigS$ preserves them.
The compatibility of composition with sifted colimits follows.
Then the claim of the lemma can be deduced since reflective coequalizers are a particular case of a sifted colimit.
\end{proof}

\begin{short-lem}
\label{lem:VRIG-tame:2}
The forgetful pseudofunctor $U:\VRig\to \Rig$ has a left adjoint given by $\rigR \mapsto \rigV\otimes\rigR$.
Moreover, this is a tame pseudofunctor.
\end{short-lem}
\begin{proof}
This follows from the fact that $\rigV\otimes\rigR$ is the coproduct in $\Rig$.
Its tameness is left to the reader.
\end{proof}

\Cref{lem:VRIG-tame:1,lem:VRIG-tame:2} show that the adjunction $\rigV\otimes-:\Rig\rightleftarrows \VRig:U$ lives in the bicategory $\SIFTCAT$.

\begin{short-prop}
\label{prop:VRIG-tame}
The tame pseudofunctor $U:\VRig\to \Rig$ is pseudomonadic and creates Eilenberg--Moore objects.
\end{short-prop}
\begin{proof}
For any object $X$ in a bicategory $\ccatC$ with coproducts, the functor $Y\mapsto X+Y$ is a pseudomonad whose algebras are the maps $X\to Y$.
The creation of Eilenberg--Moore objects follows.
\end{proof}

Consider the 2-rig $\fun{\freesmcA\op}\rigV$ where the tensor product is given by Day convolution.
The functor of constant diagrams $\rigV\to \fun{\freesmcA\op}\rigV$ makes it into a $\rigV$-rig.

\begin{short-lem}
\label{lem:VRIG-EMK}
There exists a $\rigV$-rig equivalence $\fun{\freesmcA\op}\Set \otimes \rigV \cong \fun{\freesmcA\op}\rigV$.
\end{short-lem}
\begin{proof}
The equivalence of cocomplete categories $\fun{\freesmcA\op}\Set \otimes \rigV \cong \fun {\freesmcA\op}\rigV$ is a known formula for the tensor product of presentable categories.
The fact that it is an equivalence of 2-rigs follows by an explicit computation of the monoidal structure on each side.
\end{proof}

\begin{short-cor}
\label{cor:VRIG-EMK}
The forgetful pseudofunctor 	$U:\VRig\to \CAT$ has a partial left adjoint sending a small category $\catA$ to the $\rigV$-rig $\fun{\freesmcA\op}\rigV$.
\end{short-cor}
\begin{proof}
The partial left pseudoadjoint is obtained by composing $\rigV\otimes-$ with the partial left pseudoadjoint $\Cat\to \Rig$ of \cref{ex:rig}\,\ref{ex:rig:free}.
The description of the left pseudoadjoint follows by \cref{prop:VRIG-tame,lem:VRIG-EMK}.
\end{proof}

\begin{short-defn}[Free $\rigV$-rig]
We shall say that a $\rigV$-rig is \emph{free} (free on a set) if it is equivalent to a 2-rig $\fun \freesmcA \rigV$ for some small category $\catA$ (for $A$ a set).
We denote by $\FsetVRig\subseteq\FVRig\subseteq\VRig$ the full sub-bicategories spanned by free $\rigV$-rigs generated by a set or a category.
\end{short-defn}

Since $\VRig$ is tame by \cref{lem:VRIG-tame:1}, so is the full sub-bicategory $\FVRig$.
By \cref{prop:EMC-embedding2}, the Eilenberg--Moore--Kleisli completion of $\FVRig$ is a full subcategory spanned by Eilenberg--Moore objects of monads in $\FVRig$.

\begin{short-lem}
\label{lem:V-EMK}
The Eilenberg--Moore--Kleisli completions of $\FsetVRig$ and $\FVRig$ coincide.
\end{short-lem}
\begin{proof}
Since $\FsetVRig\subseteq\FVRig$, we need only to prove that $\FVRig$ is inside the Eilenberg--Moore--Kleisli completion of $\FsetVRig$.
The left pseudoadjoint $V\otimes-$ sends $\FRig$ to $\FVRig$ and Kleisli objects to Kleisli objects.
Using \cref{lem:opdrig=em-frig}, we get the expected inclusion.
\end{proof}

\begin{short-defn}[Operadic $\rigV$-rig]
We define the sub-bicategory of \emph{operadic $\rigV$-rigs} to be the full sub-bicategory $\OpdVRig\subseteq\VRig$ which is the Eilenberg--Moore--Kleisli completion of $\FsetVRig$ (or, by \cref{lem:V-EMK}, of $\FVRig$).
\end{short-defn}

\begin{short-prop}
\label{prop:V-opd}
The category $\OpdVRig$ is equivalent to the bicategory of $\rigV$-enriched operads and bimodules between them.
\end{short-prop}
\begin{proof}
By \cref{cor:VRIG-EMK}, we can compute the Eilenberg--Moore object of $\FVRig$ in $\Rig$.
Using \cref{lem:VRIG-tame:1}, we see that a monad in $\FVRig$ is a monad $M:\fun{\freesmcA\op}\rigV\to\fun{\freesmcA\op}\rigV$ in $\Rig$ together with coherent isomorphisms $M^n\circ r \cong r:\rigV\to\fun{\freesmcA\op}\rigV$, for all $n\geq 0$.
This unfolds to a symmetric sequence $\freesmcA\op\times A\to \rigV$ equipped with a monoid structure for the composition product.
When $A$ is a set, one can recognize the definition of a $\rigV$-operad with colours $A$.
Similarly, morphisms between such monads can be seen to corresponds to symmetric sequences $\freesmcB\op\times A\to \rigV$ with a bimodule structure over the two associated operads.
\end{proof}

\begin{short-rmk} 
\label{rmk:enriched-opd-morphism}
The classical bicategory of enriched operads and their morphisms can be constructed from $\OpdVRig$.
Let $\mathsf{Arr}(\OpdVRig)$ be the arrow bicategory of $\OpdVRig$ with the domain projection $\mathsf{dom}:\mathsf{Arr}(\OpdVRig) \to \OpdVRig$.
We consider the full sib-bicategory $\mathsf{Kl}(\OpdVRig) \subseteq \mathsf{Arr}(\OpdVRig)$ spanned by the \emph{Kleisli arrows}, that it the canonical $\rigV$-morphisms $\rigR\to \rigR^M$ associated to a monad $M$ in $\VRig$.
Consider the functor $\Set \to \FVRig \subseteq \OpdVRig$ sending a set to the corresponding free $\rigV$-rig.
The fiber product $\OpdVone$ of bicategories
\[
\begin{tikzcd}
\OpdVone \ar[r]\ar[d] & \Set \ar[d]\\
\mathsf{Kl}(\OpdVRig) \ar[r,"\mathsf{dom}"] & \OpdVRig 
\end{tikzcd}
\]
is the 1-category of $\rigV$-enriched operads an $\rigV$-enriched morphism between them.
To have the bicategory of such operads, we need two steps.
First, we need to replace $\Set$ by $\Cat$ and consider the fiber product
\[
\begin{tikzcd}
\mathsf{Subst}_\rigV \ar[r]\ar[d] & \Cat \ar[d]\\
\mathsf{Kl}(\OpdVRig) \ar[r,"\mathsf{dom}"] & \OpdVRig \,.
\end{tikzcd}
\]
The bicategory $\mathsf{Subst}_\rigV$ is the bicategory of \emph{$\rigV$-enriched substitudes} (\cref{rmk:set-of-colours}).
An object in $\mathsf{Subst}_\rigV$ is a morphism of $\rigV$-rigs $\fun{\freesmcA\op}\rigV \to \fun{\freesmcA\op}\rigV^M$, where $\catA$ is a small category.
We have seen in \cref{prop:V-opd} that the $\rigV$-rig $\fun{\freesmcA\op}\rigV^M$ is associated to an $\rigV$-enriched operad $\opdP:\freesmcA\op\times A\to \rigV$.
Let $|\opdP|$ denote the underlying operad of $\opdP$ (obtained by composing $\opdP$ with the ``hom from the unit'' functor $\hom\rigV\unit-:\rigV\to \Set$).
There exists a canonical functor $\catA\to|\opdP|_1$ (where $|\opdP|_1$ is the category of unary operations of $|\opdP|$).
Then the bicategory of $\rigV$-enriched operads is equivalent to the full sub-bicategory $\OpdV\subseteq \mathsf{Subst}_\rigV$ spanned by the objects for which $\catA\to |\opdP|_1$ is an equivalence.
(A similar reconstruction is done from the symmetric monoidal envelopes in \cite[5.17]{BataninM:regpsf}.)

In the higher categorical setting, an analog of this result for operads colored by groupoids has been proved in \cite[Theorem 5.6 and Corollary 5.16]{Zetto}.
\end{short-rmk}

\begin{short-rmk}
This approach to $\rigV$-enriched operads goes around the notion of $\rigV$-enriched categories.
By an argument similar to \cref{prop:free-coc}, the bicategory of small $\rigV$-enriched categories and ``$\rigV$-matrices'' between them can be recovered as the Eilenberg--Moore--Kleisli completion of the tame bicategory of free $\rigV$-modules, \ie $\rigV$-modules of the type $\fun{A\op}\rigV$ for $\catA$ a small (\emph{not} $\rigV$-enriched) category.
The bicategory of $\rigV$-enriched categories and enriched functors can be constructed from this one by a fiber product similar to that of \cref{rmk:enriched-opd-morphism}.
\end{short-rmk}

\appendix
\section{Analytic series formalism}
\label{app:anasf}

This appendix proposes a terminology and notation to compute with morphisms between free and operadic 2-rigs.
We start with a useful convention to lighten notations, inspired by \emph{Einstein's convention} in tensor calculus (see also \cite{AnelM:cofree}).
\begin{enumerate}[label=\alph*)]
\item For a covariant diagram $X \co C\to \rigR$ into a 2-rig $\rigR$, we denote by $\covariant Xc$ its value at the object $c$ of $C$.
\item For a contravariant diagram $F \co C\op\to \rigR$, we use instead the notation $\contravariant Fc$.
\item The coend between two such functors $F\otimes_CX = \int^{c\in C} \contravariant Fc\otimes \covariant Xc$ is abbreviated $\contravariant Fc\otimes \covariant Xc$, without the integral sign.
The convention is that there is an implicit coend on every variable appearing both on contravariant and covariant position.
The domain of the coend should be clear from the context.
\item More generally, we shall denote the values of a functor $F \co C\op\times D\to \rigR$ by $\bivariant Fcd$.
\item The same coend convention applies to notation with multiple indices.
Given another functor $G \co D\op\times E\to \rigR$, the object
$\bivariant Gde \otimes \bivariant Fcd \otimes \covariant Xc$ is the value at $e$ of the double coend $G\otimes_DF\otimes_CX$.

\item The role of the Kronecker tensor is played by the hom functor $\hom Ccd$ of the category $C$ indexing the coend, for which will also use the more suggestive notation $\bivariant Ccd$.
The Yoneda lemma is equivalent to either canonical isomorphisms
$\contravariant Xc \xto\simeq \bivariant Ccd \otimes \contravariant Xd$ or
$\covariant Xd \xto\simeq \bivariant Ccd \otimes \covariant Xc$.

\end{enumerate}

\medskip
We introduce the following terminology for a 2-rig $\rigR$.
\begin{enumerate}[label=\alph*)]
\item A \emph{(2-rig) analytic function} in $\rigR$ is a 2-rig morphism into $\rigR$ (of which we shall leave the domain implicit).
\item An \emph{point} of $\rigR$ is a morphism from $\rigR$ (of which we shall leave the codomain implicit).
\item The \emph{evaluation} of an analytic function $p$ at the point $x$ is the composition $p(x)  \co = x\circ p$.
\[
\begin{tikzcd}
\bullet \ar[r,"p"]\ar[rr,"p(x)"', bend right]
& \rigR \ar[r,"x"]
& \bullet
\end{tikzcd}
\]
\item When $\rigR=\freerigA$ is free, 
\begin{enumerate}
\item the \emph{series} of an analytic function $p$ is the corresponding functor $\freesmcA\op\to \Set$, and
\item the \emph{coefficients} of the series of an analytic function $p$ is the collection of values $\Scontravariant pa$ of the series, and
\item the \emph{coordinates} of a point $x$ are the values $\covariant xa$ of the restriction of $x$ along $A\to \freerigA$.
\end{enumerate}
Then the evaluation of $p$ at $x$ is given by the coend (where $\vec a = \symelt an$)
\[
p(x) = \Scontravariant pa
\otimes
\big(\covariant x{a_1}\otimes\dots\otimes\covariant x{a_n}\big)\,.
\]
\item 
For a 2-rig morphism $f \co \freerigA\to\freerigB$, the \emph{coefficients} of $f$ are the values $\Sbivariant fba$ of the corresponding functor $\freesmcB\op\times A\to \Set$
(\ie the coordinates of its series, or the series of its coordinates).

\item If $\rigR=\opdrigP$ is an operadic rig, 
we shall denote $\bimssj p$ and $\bimssj x$ the underlying left and right modules of an analytic function and a point of $\rigR$.
\begin{enumerate}
\item The morphism $\bimssj p$ is equipped with a left action of $\opdP$:
\[
\opdP\circ \bimssj f \stto \bimssj f
\,.
\]
We shall say that the $\Scontravariant {\bimssj f}a$ with the action of $\opdP$ form the \emph{$\opdP$-series} of $f$.
\item The morphism $\bimssj x$ is equipped with a right action of $\opdP$:
\[
\bimssj f \circ \opdP \stto \bimssj f
\,.
\]
We shall say that the $\covariant {\bimssj x}a$ with the action of $\opdP$ are the \emph{$\opdP$-coordinates} of $x$.
\end{enumerate}

\item For $f \co \opdrigP\to\opdrigQ$ a morphism between operadic 2-rigs, we denote by 
$\bimssj f \co \freerigA\to \freerigB$ its underlying bimodule
\[
\opdQ \circ \bimssj f \circ \opdP \stto \bimssj f
\,.
\]
We shall say that the $\Sbivariant {\bimssj f}ba$ form the \emph{$\opdQ$-series of $\opdP$-algebras} of $f$.

\item 
If $g \co \opdrigQ\to\rigR$ is an other morphism,
the right $\opdP$-module $\bimssj {g\circ f}$ is given by the reflective coequalizer
\[
\begin{tikzcd}
\bimssj g \circ \opdQ \circ \bimssj f
\ar[r, shift right = 2]
\ar[from=r]
\ar[r, shift left = 2] &
\bimssj g \circ \bimssj f
\ar[r] &
\bimssj g \bimcomp \opdQ \bimssj f \,.
\end{tikzcd}
\]

\item 
A functor $F \co \Alg\rigR\opdQ\to\Alg\rigR\opdP$ is called an \emph{analytic functor} if $F\simeq f^*$ for a morphism of 2-rigs $f \co \opdrigP\to\opdrigQ$.

\end{enumerate}

\medskip
Using all these definitions, Einstein's convention can be used to write explicit formulas for many functors.

\begin{enumerate}
\item For two 2-rig morphisms $f \co \freerigA\to\freerigB$ and $g \co \freerigB\to\freerigC$, the coefficients of the composition $g\circ f$ are given by the evaluation of $f$ on the point $g$:
\[
\Sbivariant {\big(g\circ f\big)}ca =  \Sbivariant fba
\otimes
\Scontravariant{\big(\covariant g{b_1}\otimes\dots\otimes\covariant g{b_n}\big)}c
\,.
\]
where the Day convolution
\[
\Scontravariant{\big(\covariant g{b_1}\otimes\dots\otimes\covariant g{b_n}\big)}c
\ =\ 
\big(\Sbivariant g{c_1}{b_1}\otimes\dots\otimes\Sbivariant g{c_n}{b_n}\big)\otimes \Sbivariant\freesmcB c{\otimes \vec c_i}\,.
\]
We shall put $\SSbivariantotimes gcb \defeq \Scontravariant{\big(\covariant g{b_1}\otimes\dots\otimes\covariant g{b_n}\big)}c$
and abbreviate
\[
\Sbivariant {\big(g\circ f\big)}ca =
\Sbivariant fba
\otimes
\SSbivariantotimes gcb
\,.
\]
A triple composition is then given by 
\begin{align*}
\Sbivariant {\big(h\circ g\circ f\big)}da
&=\Sbivariant fba
\otimes
\Scontravariant{\big(\covariant g{b_1}\otimes\dots\otimes\covariant g{b_n}\big)}c
\otimes
\Scontravariant{\big(\covariant h{c_1}\otimes\dots\otimes\covariant h{c_m}\big)}d\\
&=
\Sbivariant fba
\otimes
\SSbivariantotimes gcb
\otimes
\SSbivariantotimes hdc
\end{align*}

\item If $y \co \freerigB\to \rigR$ the same convention applies
and the analytic functor $f^*$ associated to $f \co \freerigA\to\freerigB$ is
\begin{align}
\label{eq:analytic-free}
\begin{split}
\fun B\rigR & \tto \fun A\rigR\\
\covariant yb &\mto
\Sbivariant fba
\otimes
\Sbivariantotimes yb
\,.
\end{split}
\end{align}
The analogy with real analytic functions should be clear enough.
We are going to push the analogy into operadic 2-rigs.

\item Let $\rigR=\opdrigP$ be an operadic rig, equipped with a function $p$ and a point $x$.
\begin{enumerate}
\item The left action of $\opdP$ on the series of $\bimssj p$ is given
\[
\Scontravariant {\bimssj p}{a'}
\otimes
\SSbivariantotimes \opdP a{a'}
\stto \Scontravariant {\bimssj p}a
\,.
\]
\item The right action of $\opdP$ on the coordinates of $\bimssj x$ is given by
\[
\Sbivariant \opdP {a'}a
\otimes
\Scovariantotimes {\bimssj x}{a'}
\stto \covariant {\bimssj x}a
\,.
\]
\end{enumerate}

\item
For a morphism $f \co \opdrigP\to\opdrigQ$, the action of $\opdP$ and $\opdQ$ on $\bimssj f$ is given by 
\[
\SSbivariantotimes \opdP {a'}a
\otimes
\SSbivariantotimes {\bimssj f}{b'}{a'}
\otimes
\SSbivariantotimes \opdQ b{b'}
\stto \Sbivariant {\bimssj f}ba
\,.
\]

\item 
If $y \co \opdrigQ\to\rigR$ is a point of $\opdrigQ$,
the right $\opdP$-module $\bimssj {y\circ f}$ is given by the reflective coequalizer
\begin{align*}
\Sbivariant {\big(\bimssj y\bimcomp\opdQ \bimssj f\big)}da
=\Sbivariant {\bimssj f}ba
\underset{\opdQ}\otimes
\SSbivariantotimes {\bimssj y}c{b}
&\defeq \colim\left(
\begin{tikzcd}[sep = small,ampersand replacement=\&]
\Sbivariant {\bimssj f}ba
\otimes
\SSbivariantotimes \opdQ {b'}b
\otimes
\SSbivariantotimes {\bimssj y}c{b'}
\ar[r, shift right = 2]
\ar[from=r]
\ar[r, shift left = 2] 
\&
\Sbivariant {\bimssj f}ba
\otimes
\SSbivariantotimes {\bimssj y}cb
\end{tikzcd}
\right) \,.
\end{align*}
The analytic functor $f^*$ between algebras is given by
\begin{align}
\label{eq:analytic-opd}
\begin{split}
\Alg\rigR\opdQ & \tto \Alg\rigR\opdP\\
\covariant {\bimssj y}b &\mto
\Sbivariant {\bimssj f}ba
\underset{\opdQ}\otimes
\Sbivariantotimes {\bimssj y}b
\,.
\end{split}
\end{align}
(Notice that the coend in \eqref{eq:analytic-opd} is computed in $\rigR$, not in $\Alg\rigR\opdP$, where its components would not always make sense.)
Formula~\eqref{eq:analytic-opd} shows what an analytic functor between categories of algebras is:
starting with a $\opdQ$-algebra $y$, one can 
\begin{enumerate}
\item compute arbitrary tensor powers of the coordinates of $y$, 
\item assemble these powers into an object of $\rigR$ by using the action of $\opdQ$ to contract them with a $\opdQ$-series,
\item lift this object to a $\opdP$-algebra, by means of an action of $\opdP$ on the $\opdQ$-series.
\end{enumerate}

\item 
For a morphism of operads $u \co \opdP\to \opdQ$, 
the coefficients of the induced morphism of 2-rigs $\Phi(u_\circ) \co \opdrigQ\to \opdrigP$ of \cref{cor:opdrig=opdbim} are
\[
\Sbivariant{\bimssj {\Phi(u_\circ)}}ba = \Sbivariant\opdQ b{u(a)}
\]
and the image of a $\opdQ$-algebra $y$ is given by
\begin{align}
\label{eq:restriction}
\begin{split}
\Phi(u_\circ)  \co \Alg\rigR\opdQ & \tto \Alg\rigR\opdP \\
\covariant yb
&\mto 
\covariant y{u(a)}
= \Sbivariant\opdQ b{u(a)}
\underset{\opdQ}\otimes
\Sbivariantotimes yb
\,.
\end{split}
\end{align}
Similarly, the coefficients of the morphism $\Phi(u^\circ) \co \opdrigP\to \opdrigQ$ are
\[
\Sbivariant{\bimssj {\Phi(u^\circ)}}ab = \bivariant\opdQ {u(\vec a)}b\,,
\]
and the $u$-enveloping $\opdQ$-algebra of a $\opdP$-algebra $x$ is given by
\begin{align}
\label{eq:induction}
\begin{split}
\Phi(u^\circ) \co \Alg\rigR\opdP & \tto \Alg\rigR\opdQ \\
\covariant xa &\mto
\bivariant\opdQ {u(\vec a)}b
\underset{\opdP}\otimes
\Sbivariantotimes xa\,.
\end{split}
\end{align}
\end{enumerate}

\end{document}